\DeclareFontFamily{U}{matha}{\hyphenchar\font45}
\DeclareFontShape{U}{matha}{m}{n}{
  <5> <6> <7> <8> <9> <10> gen * matha
  <10.95> matha10 <12> <14.4> <17.28> <20.74> <24.88> matha12
  }{}
\DeclareSymbolFont{matha}{U}{matha}{m}{n}
\DeclareFontFamily{U}{mathx}{\hyphenchar\font45}
\DeclareFontShape{U}{mathx}{m}{n}{
  <5> <6> <7> <8> <9> <10>
  <10.95> <12> <14.4> <17.28> <20.74> <24.88>
  mathx10
  }{}
\DeclareSymbolFont{mathx}{U}{mathx}{m}{n}
\DeclareMathSymbol{\obot}         {2}{matha}{"6B}
\DeclareMathSymbol{\bigobot}       {1}{mathx}{"CB}
\numberwithin{equation}{section}
\theoremstyle{plain}
\newtheorem{proposition}{Proposition}[section]
\newtheorem{lem}[proposition]{Lemma}
\newtheorem{thm}[proposition]{Theorem}
\newtheorem{prop}[proposition]{Proposition}
\theoremstyle{definition}
\newtheorem{defn}[proposition]{Definition}
\newtheorem{eg}[proposition]{Example}
\theoremstyle{remark}
\newtheorem{rmk}[proposition]{Remark}
\numberwithin{equation}{section}
\newcommand{\BA}{{\mathbb {A}}} 
\newcommand{\BC}{{\mathbb {C}}} \newcommand{\BD}{{\mathbb {D}}}
 \newcommand{\BH}{{\mathbb {H}}}
 \newcommand{\BP}{{\mathbb {P}}}
\newcommand{\BQ}{{\mathbb {Q}}}
 \newcommand{\BZ}{{\mathbb {Z}}}
\newcommand{\Aut}{{\mathrm{Aut}}}
\newcommand{\Char}{{\mathrm{Char}}}
\newcommand{\End}{{\mathrm{End}}}
 \newcommand{\GL}{{\mathrm{GL}}}
\newcommand{\GSpin}{{\mathrm{GSpin}}}
\newcommand{\Jac}{{\mathrm{Jac}}}
 \newcommand{\Pic}{\mathrm{Pic}}
\newcommand{\rf}{{\mathrm{f}}}
\newcommand{\Sh}{{\mathrm{Sh}}}
\newcommand{\SL}{{\mathrm{SL}}}
 \newcommand{\CH}{{\mathrm{CH}}}
\newcommand{\Sym}{{\mathrm{Sym}}}
\newcommand{\tr}{{\mathrm{tr}}}
\newcommand{\Griff}{{\mathrm{Griff}}}
\def\sym{\mathrm{sym}}
\def\prp{\psi}
\def\CC{\mathbb{C}}
\def\QQ{\mathbb{Q}}
\def\ZZ{\mathbb{Z}}
\def\gks{\Delta_{GKS}}
\def\AJ{\mathrm{AJ}}
\def\Extm{\mathrm{Ext}_{\text{MHS}}}
\def\I{\mathsf{I}}
\def\J{\mathsf{J}}
\def\onto{\twoheadrightarrow}
\def\xt{X^{2}}
\def\xT{X^{3}}
\def\wpr{\bigwedge^3_{\mathrm{pr}}}
\def\ajeq{\underset{\AJ}{\equiv}}
\def\W{\mathcal{W}}
\newcommand\supervisor[1]{\def\@supervisor{#1}}
\newcounter{elno}
\newcommand{\into}{\hookrightarrow}
\renewcommand{\cong}{\simeq}
 \subjclass[2020]{Primary 14C25; Secondary 	14G35}
\keywords{Modular curve, Ceresa cycle, Gross--Kudla--Schoen modified diagonal cycle, Chow--Heegner divisor} 
\author{Matt Kerr}
\author{Wanlin Li} 
\address{ }
\email{}
\author{Congling Qiu}
\author{Tonghai Yang}
\begin{document} 

\title[Ceresa and Gross--Kudla--Schoen cycles of modular curves]{Non-triviality of algebraic cycles associated to modular curves via pullback formula}

\begin{abstract}
   Associated to an algebraic curve $X$, there are two canonically constructed homologically trivial algebraic $1$-cycles, the \textit{Ceresa cycle} in 
 the Jacobian of $X$,  and the \textit{Gross--Kudla--Schoen modified diagonal cycle} in the triple product $X \times X \times X$. According to Shou-Wu Zhang (\cite{Zhang}), one is torsion if and only if the other is.  In this paper, we prove that these two cycles associated to a large family of modular curves are non-torsion in the corresponding Chow groups. We obtain the result by relating this problem to the
study of special cycles on orthogonal Shimura varieties. As the main ingredient and a result of independent interest, we develop a pullback formula for special divisors on modular curves embedded in their products via the diagonal map.
\end{abstract}

\maketitle

\section{Introduction}

\subsection{The Ceresa and Gross--Kudla--Schoen cycles}\label{subsec:MainCeresa}
The
\textit{ Ceresa cycle} $X^+_o - X^-_o$ is an algebraic $1$-cycle in the Jacobian of a curve $X$ with a fixed base point $o$. \footnote{Instead of a point, one can use a degree $1$ divisor and canonically can take $\xi \in \CH^1(X)$ satisfying $(2g-2)\xi=K_X$ where $K_X$ is the canonical class following \cite{Zhang}. See Example \ref{eg:diagonal} for a detailed discussion.} It is in the kernel of the cycle class map from the Chow group $\CH^{g-1}(\Jac(X))$ to a Weil cohomology theory $\textup{H}^{2g-2}(\Jac(X))$ and thus is referred to as homologically trivial. In \cite{Ceresa}, Ceresa proved this cycle is of infinite order modulo algebraic equivalence for a very general complex curve of genus $\ge 3$. From this result, the Ceresa cycle is viewed as one of the first examples showing the infinitude of the Griffiths group.

The method used in Ceresa's work \cite{Ceresa} involves degeneration on families of curves and thus it remains an interesting problem to determine the non-triviality of the Ceresa cycle for an explicit algebraic curve. When $X$ is hyperelliptic, the Ceresa cycle $X^+_o-X^-_o$ is linearly trivial if $o$ is chosen to be a Weierstrass point and algebraically trivial for any choice of base point. Hence it is a natural problem to study the relationship between the triviality of the Ceresa cycle $X^+_o - X^-_o$ and the hyperellipticity of $X$. There have been numerous works \cites{BS, LS, Qiu, QZ1,QZ2} devoted to showing triviality of the Ceresa cycle in $\CH^{g-1}(\Jac(C))$ or $\Griff^{g-1}(\Jac(C))$ for non-hyperelliptic curves\footnote{In this paper, we will work with Chow groups tensored with $\BQ$ and use notation $\CH^{g-1}(\Jac(C))$ to denote $\CH^{g-1}(\Jac(C))\otimes\BQ$. Thus, from now on, by ``trivial'' or ``nontrivial'', we will always mean the order of a cycle class in $\CH^{*}(\cdot)$ is finite or infinite and ignore the nontrivial torsion information contained in integral Chow groups.}.


In \cites{QZ1,QZ2}, Qiu and Zhang developed a sufficient condition for the Ceresa cycle to be trivial in the Chow group $\CH^{g-1}(\Jac(C))$, and used their criterion to construct explicit examples of non-hyperelliptic curves with trivial Ceresa cycle. In particular, in \cite{QZ2}, they applied their method to modular curves and Shimura curves but could only find a handful of such examples. This led Qiu to conjecture (in \cite{Qiu}*{Conjecture 1.2.2}) that only finitely many Shimura curves are associated with trivial Ceresa cycles. This statement can be seen as a generalization of the finiteness of the set of hyperelliptic Shimura curves.
Our first theorem shows that for a large class of modular curves, the associated Ceresa cycle is nontrivial in the Chow group $\CH^{g-1}(\Jac(C))$. 

\begin{restatable}{thm}{CeresaConcrete}
    \label{Ceresa-concrete} Given a positive integer $N$, let $\Gamma$ be the congruence subgroup $\Gamma_1(2N) \cap \Gamma(2)\subset \SL_2(\BZ)$ and let $X_N=\Gamma\backslash\overline{\mathbb{H}}$ be the modular curve associated to $\Gamma$.

(a) If $N$ satisfies one of the following conditions, then the Ceresa cycle associated to $X_N$ is nontrivial in the the Chow group $\CH^{g-1}(\Jac(X_N))$ with respect to any chosen base point:
\begin{enumerate}
    \item $N$ is divisible by a prime $p=37, 43, 53, 61, 67$ or $ p >71$,

    \item   $N$ is divisible by $p^2$ for some prime $ p \ge 11$.
\end{enumerate}

(b) When $N$ is big enough,  then the Ceresa cycle associated to $X_N$ is nontrivial in $\CH^{g-1}(\Jac(X_N))$ with respect to any chosen base point. For example 
$$N > 2^6 \cdot 3^4 \cdot 5^2 \cdot 7^2 \cdot \prod_{ \begin{matrix} 11 \le p \le 71 \\ p \ne 37, 43, 53, 61, 67 \end{matrix}} p$$
 is enough.   
\end{restatable}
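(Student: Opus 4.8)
The plan is to convert the assertion about the Ceresa cycle into a concrete non-vanishing statement on $X_N$ itself, by combining Zhang's equivalence with the pullback formula announced in the introduction, and then to supply the needed input from the arithmetic of modular abelian varieties of small level. By Zhang's theorem it is enough to prove that the Gross--Kudla--Schoen cycle $\gks\in\CH^{2}(X_N^{3})$ is non-torsion. The statement for an arbitrary base point follows from that for one convenient choice (or from the canonical normalization $\xi$ of the footnote): changing the base point alters $\gks$, and the Ceresa cycle, only by a ``decomposable'' correction term whose triviality is independent of the choice.

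Next I would use that $X_N\times X_N$ is a component of an orthogonal Shimura variety of signature $(2,2)$, reflecting the isogeny $\SL_2\times\SL_2\sim\mathrm{Spin}(2,2)$. It carries Kudla--Millson special divisors $Z(m)$ for $m\ge 1$ -- here essentially the Hecke correspondences $T_m$, with $Z(1)$ the diagonal $\delta(X_N)$ -- and the generating series $\sum_{m\ge 1}[Z(m)]\,q^{m}$ is a modular form valued in $\CH^{1}(X_N\times X_N)$. Pulling these classes back along the diagonal embedding $\delta\colon X_N\hookrightarrow X_N\times X_N$ produces $0$-cycles $\delta^{*}Z(m)\in\CH^{1}(X_N)$ supported on CM points -- the Chow--Heegner divisors of the title. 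The pullback formula (the technical heart, developed afterwards) evaluates each $\delta^{*}Z(m)$ explicitly as a CM contribution plus an Eisenstein/cuspidal contribution plus a piece pulled back from the two projections; together with the relation of $\gks$ to these special cycles -- which is how the orthogonal Shimura variety enters the problem -- one gets that $\gks$, hence the Ceresa cycle, is non-torsion exactly when the $\Jac(X_N)$-valued modular form assembled from the CM parts of the $\delta^{*}Z(m)$ is nonzero. The degree-zero cuspidal contributions are torsion by the Manin--Drinfeld theorem, so only the CM part can carry the non-triviality.

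It then remains to prove this modular form nonzero, which I would do isotypically. For a weight-$2$ newform $f$ of level dividing $2N$, the $f$-isotypic projection turns the coefficients of the generating series of CM $0$-cycles into Heegner points attached to $f$ (the Gross--Kohnen--Zagier formalism), so it suffices to produce one such $f$, together with an admissible imaginary quadratic field $K$, for which the corresponding Heegner point is non-torsion -- equivalently, by Gross--Zagier(--Zhang), for which $\ord_{s=1}L(f/K,s)=1$, so that the $f$-factor of $J_0(2N)$ acquires positive Mordell--Weil rank. The numerical hypotheses are exactly what guarantee such an $f$. If $p\mid N$ with $p\in\{37,43,53,61,67\}$, then $J_0(p)$ has a modular abelian variety quotient of analytic rank $1$ -- these are precisely the primes $\le 71$ for which some newform of level $p$ has root number $-1$, equivalently for which $X_0^{+}(p)$ has positive genus -- and this quotient survives at level $2N$; similarly, $p^{2}\mid N$ with $p\ge 11$ supplies one at level divisible by $p^{2}$. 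This settles (a). For (b) one notes that $g(X_0^{+}(p))>0$ for every prime $p>71$ (a finite check, since $g(X_0^{+}(p))\sim\tfrac12 g(X_0(p))\to\infty$), so that a suitable divisor $M\mid 2N$ carries a newform of root number $-1$ once $N$ exceeds the stated threshold, and then extracts a rank-exactly-one newform at the required level either by direct computation in the remaining small cases or via the unconditional non-vanishing theorems for central derivatives of modular $L$-functions (Bump--Friedberg--Hoffstein, Murty--Murty, Iwaniec, Kowalski--Michel). Collecting the prime-power cases and the high powers of $2,3,5,7$ produces the bound $N>2^{6}\cdot 3^{4}\cdot 5^{2}\cdot 7^{2}\cdot\prod_{11\le p\le 71,\ p\ne 37,43,53,61,67}p$.

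The hard part will be the pullback formula of the second step: both proving it -- which requires intersecting Hecke correspondences with the diagonal and keeping precise track of the CM $0$-cycles that arise, the degenerate contributions supported at the cusps, and the Eisenstein terms -- and, just as importantly, checking that the combination of special divisors it controls is exactly the one into which $\gks$ translates, so that a non-torsion Chow--Heegner divisor genuinely forces a non-torsion Ceresa cycle rather than only a weaker Abel--Jacobi-level statement. A secondary difficulty, responsible for the rather large explicit bound in (b), is making the arithmetic input unconditional across all primes $p>71$ and all large powers of $2,3,5,7$.
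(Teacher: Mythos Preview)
Your overall strategy matches the paper's: reduce to nontriviality of $\gks(X_N,\infty)$ via the Chow--Heegner divisor construction, realize a Heegner divisor on $X_N$ as $\delta^*Z$ for some special divisor $Z$ on $X_N\times X_N$ using the pullback formula, invoke Gross--Zagier/Gross--Kohnen--Zagier to show this Heegner divisor is nontorsion when a suitable newform exists, and finally verify the numerical hypotheses supply the needed newform. Two steps, however, are genuinely underspecified and would not go through as written.

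First, the passage to an arbitrary basepoint. Your claim that ``changing the base point alters $\gks$ only by a decomposable correction term whose triviality is independent of the choice'' is not correct as stated: the correction $\gks(X,o)-\gks(X,\infty)$ is typically \emph{not} torsion. What the paper actually does is a Hodge-theoretic decomposition argument: it shows that (i) $\AJ(\gks(X,\infty))$ lies in the summand $J(\wpr)$ of $J(\bigwedge^3 H^1(X))$ --- this uses that $(2g-2)\infty\equiv K_{X_N}$ in $\CH^1(X_N)$, itself a consequence of $\Gamma$ being torsion-free together with Manin--Drinfeld --- while (ii) the correction has $\AJ$-image in the complementary summand $J(\theta H^1)$. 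Hence the primitive Abel--Jacobi image is basepoint-independent and cannot be cancelled. Deducing the \emph{Ceresa} statement from the $\gks$ statement for all $o$ then goes through Colombo--van Geemen relating the two primitive $\AJ$-images, not through Zhang's equivalence (which only applies at the canonical basepoint $\xi$).

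Second, the arithmetic input. You correctly identify $\{37,43,53,61,67\}$ as the primes $p\le 71$ carrying a level-$p$ newform of root number $-1$, equivalently with $g(X_0^*(p))>0$. But root number $-1$ only forces \emph{odd} analytic rank; what is needed is $L'(f,1)\neq 0$, i.e.\ analytic rank exactly one, and your sketch conflates the two. The paper supplies this via two separate mechanisms: when $g(X_0^*(M))\ge 2$ --- which holds for $M=p$ with $p=67$ or $p>71$ apart from a finite list, and for $M=p^2$ with $p\ge 11$ --- one invokes a theorem of Dogra--Le~Fourn guaranteeing some $f\in\mathcal{S}_2^{\mathrm{new}}(\Gamma_0(M))^-$ with $L'(f,1)\neq 0$; in the remaining genus-one cases ($p=37,43,53,61,79,83,89,101,131$) and for the small prime powers $2^7,3^5,5^3,7^3$ needed for~(b), one simply checks LMFDB. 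Your sketch elides this distinction and does not name the genus-$\ge 2$ input. (Also, the relevant newforms live in $\mathcal{S}_2^{\mathrm{new}}(\Gamma_0(N))$ and are pulled back along $X_N\to X_0(N)$; the level is $N$, not $2N$.)
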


More precisely, we obtain the following Theorem \ref{thm:main1} which implies Theorem \ref{Ceresa-concrete} when combined with results on non-vanishing of central derivative of Hecke $L$-functions.

\begin{thm}\label{thm:main1}
    Let $N$ be a positive integer such that there exists a weight $2$ normalized newform $f \in \mathcal{S}^{new}_2(\Gamma_0(N))^-$ satisfying $L'(f,1) \ne 0$. Let $\Gamma$ be the congruence subgroup $\Gamma_1(2N) \cap \Gamma(2)\subset \SL_2(\BZ)$ and let $X_N=\Gamma\backslash\overline{\mathbb{H}}$ be the modular curve associated to $\Gamma$. Then the Ceresa cycle associated to $X_N$ is nontrivial in the the Chow group $\CH^{g-1}(\Jac(X_N))$ with respect to any chosen base point.
\end{thm}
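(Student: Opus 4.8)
The plan is to reduce, via Shou-Wu Zhang's equivalence (\cite{Zhang}), to the nontriviality of the Gross--Kudla--Schoen cycle $\gks \in \CH^2(X_N^3)$, and then to detect this nontriviality by intersecting $\gks$ against a special divisor on $X_N\times X_N$ that isolates the newform $f$. The self-product $X_N\times X_N$ is, up to connected components, an orthogonal Shimura variety of signature $(2,2)$, and on it the Hecke (Hirzebruch--Zagier type) divisors $T_m$ assemble into a modular generating series $\sum_m T_m\,q^m$ valued in $\Pic(X_N\times X_N)_{\BQ}$. Pulling $T_m$ back to $X_N^3$ along the projection $p_{12}$ to the first two factors and pushing the intersection with $\gks$ to the third factor produces a class $\Theta_m := p_{3,*}\bigl(p_{12}^* T_m\cdot\gks\bigr)\in\CH^1(X_N)=\Pic(X_N)_{\BQ}$; since $\gks$ is homologically trivial, $\Theta_m$ lies in $\Jac(X_N)(\overline{\BQ})\otimes\BQ$. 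A Gross--Schoen-style intersection computation should express $\Theta_m$ in closed form: its non-degenerate part is the pullback $\delta^* T_m$ along the diagonal $\delta\colon X_N\hookrightarrow X_N\times X_N$, a class supported on CM points, plus an Eisenstein correction supported on cusps and on the base point coming from the small-diagonal terms of $\gks$ and their excess intersection with $p_{12}^{-1}T_m$. This identity, in a suitable completed/generating-function form, is the pullback formula advertised in the introduction; its proof combines the modularity of $\sum_m T_m q^m$ on the $(2,2)$ Shimura variety (transported through the correspondence $X_N^2\dashrightarrow X_N$ defined by $\gks$, which preserves modularity) with the elementary intersection-theoretic expansion of $\gks$.

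Granting the formula, the next step is Hecke projection to the $f$-isotypic component. Writing $T_f$ for the $f$-component of the modular generating series $\sum_m T_m q^m$ (equivalently $\sum_m a_m(f)T_m$ in the appropriate sense), the class $\delta^* T_f$ becomes, after pushing forward to $\Jac(X_0(N))$ through the natural degeneracy map and using that the CM points cutting out $\delta^*T_m$ on $X_N$ lie above Heegner points on $X_0(N)$, a nonzero multiple of a Heegner point $y_f\in E_f(K)\otimes\BQ$ attached to $f$ and an auxiliary imaginary quadratic field $K$ satisfying the Heegner hypothesis for $N$. Here the Gross--Kohnen--Zagier modularity of the generating series of Heegner divisors, whose $f$-component is $y_f\otimes f$, identifies the $f$-part of $\sum_m (\delta^*T_m)q^m$ with $y_f\otimes f$ up to a nonzero scalar; and the Eisenstein correction contributes nothing to the $f$-part because $f$ is cuspidal. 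Thus, schematically, $\Theta_f := p_{3,*}(p_{12}^*T_f\cdot\gks) = c\cdot y_f$ in $\Jac(X_N)\otimes\BQ$ for an explicit nonzero constant $c$, modulo classes orthogonal to $f$.

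It then remains to choose $K$ so that $y_f$ is non-torsion. Since $f\in\mathcal{S}^{new}_2(\Gamma_0(N))^-$ has global root number $-1$, the nonvanishing theorems for quadratic twists (Bump--Friedberg--Hoffstein, Murty--Murty, Waldspurger, Friedberg--Hoffstein) furnish an imaginary quadratic $K$ satisfying the Heegner hypothesis with $L(f^K,1)\ne 0$; together with the hypothesis $L'(f,1)\ne 0$ and the factorization $L(f/K,s)=L(f,s)L(f^K,s)$ this gives $L'(f/K,1)=L'(f,1)L(f^K,1)\ne 0$, whence by the Gross--Zagier formula $\widehat h(y_f)\ne 0$, so $y_f$ is non-torsion. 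Therefore $\Theta_f\ne 0$, hence $\gks\ne 0$ in $\CH^2(X_N^3)$, and by Zhang's equivalence the Ceresa cycle of $X_N$ is nontrivial in $\CH^{g-1}(\Jac(X_N))$. For independence of the base point: changing the base point alters $\gks$ by cycles supported on the small diagonals with constant components, which contribute only to the Eisenstein part of $\Theta_m$ and hence not to $\Theta_f$; alternatively one uses that the statement for the canonical choice $\xi$ already implies it for every base point, since triviality in $\CH^{g-1}$ does not depend on the base point.

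The main obstacle is the pullback formula and the bookkeeping surrounding it: one must establish modularity of the $\Jac(X_N)$-valued series $\sum_m\Theta_m q^m$ by pushing Borcherds-type modularity on the $(2,2)$ Shimura variety through the correspondence defined by $\gks$, and — more delicately — control all degenerate contributions (cuspidal components, base-point terms, and the excess-intersection loci where the small diagonals meet $p_{12}^{-1}T_m$) tightly enough to be sure that the $f$-isotypic part of $\Theta_f$ is a \emph{nonzero} multiple of the Heegner point rather than being absorbed by the correction. The second point requiring genuine care is the precise identification of the CM points on the specific curve $X_N=(\Gamma_1(2N)\cap\Gamma(2))\backslash\overline{\mathbb H}$ with honest Heegner points on $X_0(N)$, so that the Gross--Zagier formula applies verbatim and the constant $c$ is visibly nonzero.
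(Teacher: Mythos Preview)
Your core strategy matches the paper's: detect $\gks(X_N,o)$ via Chow--Heegner divisors $\Pi_Z(\gks)=\pi_{3,*}(\pi_{12}^*Z\cdot\gks)$, identify these with Heegner divisors through a pullback formula for special divisors along the diagonal $\delta\colon X_N\hookrightarrow X_N^2$, and invoke Gross--Zagier.  The paper runs this more directly than you do: rather than assembling a modular generating series and projecting to the $f$-part, it first fixes a non-torsion Heegner divisor $P_{D,r}+P_{D,-r}-2H_D\infty$ on $X_0(N)$ (via Gross--Zagier/GKZ and a choice of auxiliary $K$), pulls it back to $X_N$, and then proves (Proposition~\ref{prop:mainprop}) that \emph{every} Heegner divisor on $X_N$ is $\delta^*Z$ for some explicit $\BQ$-combination of special divisors $Z$ on $X_N^2$.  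Working with base point $\infty$, Lemma~\ref{lem:boundary} then shows the correction terms $(\jmath_i^\infty)^*Z$ are supported on cusps, hence torsion by Manin--Drinfeld.  This order of operations entirely avoids the ``is the constant $c$ nonzero?'' issue you correctly flag.

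There is a genuine gap in your treatment of base-point independence.  Your claim that ``changing the base point \ldots\ contributes only to the Eisenstein part of $\Theta_m$'' is false.  By Lemma~\ref{lem222},
\[
\Pi_Z(\gks(X,o))-\Pi_Z(\gks(X,o'))=\bigl((\jmath_1^{o'})^*+(\jmath_2^{o'})^*-(\jmath_1^{o})^*-(\jmath_2^{o})^*\bigr)Z-|\theta\cdot Z|\,(o-o'),
\]
and for $Z=T_m$ the terms $(\jmath_i^{o})^*T_m$ are Hecke translates of $o$; their $f$-isotypic image in $\Jac(X_N)$ is $a_m(f)\cdot[o-\infty]_f$, which is a \emph{cuspidal} (not Eisenstein) contribution to the generating series.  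So $\Theta_f$ genuinely shifts by a multiple of $[o-o']_f$, and this can cancel the Heegner point: that is precisely what happens for $X_0(37)$ with a Weierstrass base point (Example~\ref{eg:X0(37)}).  Your alternative, that triviality in $\CH^{g-1}$ is base-point independent, fails for the same example.

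The paper's repair is specific to $X(\Gamma)$ with $\Gamma$ torsion-free.  One shows $(2g-2)\infty=K_{X_N}$ in $\CH^1(X_N)_\BQ$ (Lemma~\ref{l2}, using that $\Delta(\tau)(d\tau)^{\otimes 6}$ trivializes $\omega^{\otimes 6}$ off the cusps), so $\AJ(\gks(X_N,\infty))$ lands in the primitive summand $J(\wpr)\subset J(\bigwedge^3 H^1)$.  Separately, for \emph{any} curve, $\AJ(\gks(X,o)-\gks(X,\infty))\in J(\theta H^1)$ (Lemmas~\ref{l1} and \ref{l3}), the complementary summand.  Hence nonvanishing at $\infty$ propagates to every $o$ (Proposition~\ref{pr24}).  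Finally, Zhang's equivalence only handles the canonical $\xi$; to get the Ceresa statement for \emph{all} base points the paper instead transports the primitive Abel--Jacobi image through $f_*\colon X^3\to\Jac(X)$ and invokes Colombo--van Geemen \cite{CG}, which identifies the primitive $\AJ$-images of $f_*\gks(X,o)$ and $3(X_o^+-X_o^-)$.
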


In the proof, we do not directly work with the Ceresa cycle, but rather with \textit{the Gross--Kudla--Schoen modified diagonal cycle} $\Delta_{GKS}(X,o)$ (\cite{GK}, \cite{GS}).  This is an algebraic $1$-cycle constructed from the pointed curve $(X,o)$ and lives in the triple product $X^3:=X \times X \times X$.  If we replace the base point $o$ by a degree $1$ divisor $\xi \in \Pic^1(X)$ satisfying $(2g-2)\xi=\omega_X$, it had been shown by S-W Zhang \cite[Theorem 1.5.5]{Zhang} that the Ceresa cycle $X^+_\xi-X^-_\xi$ is nontrivial in $\CH^{g-1}(\Jac(X))$ if and only if $\Delta_{GKS}(X,\xi)$ is nontrivial in $\CH^1(X^3)$. Moreover, by \cite[eq (6)]{DRS}, our proof shows 
 the same non-triviality statement as Theorem \ref{thm:main1} in terms of the $\Delta_{GKS}$ cycle.

\begin{restatable}{thm}{mainGKS}    \label{thm:main2}
    Let $N$ be a positive integer such that there exists a weight $2$ normalized newform $f \in \mathcal{S}^{new}_2(\Gamma_0(N))^-$ satisfying $L'(f,1) \ne 0$. Let $\Gamma$ be the congruence  subgroup $\Gamma_1(2N) \cap \Gamma(2)\subset\SL_2(\BZ)$ and let $X_N=\Gamma\backslash\overline{\mathbb{H}}$ be the modular curve associated to $\Gamma$. Then the Gross--Kudla--Schoen modified diagonal cycle associated to $X_N$ is of infinite order in the the Chow group $\CH^{2}(X_N^3)$ with respect to any chosen base point. 
\end{restatable}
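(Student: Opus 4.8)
The plan is to deduce Theorem~\ref{thm:main2} from Theorem~\ref{thm:main1} via Zhang's comparison. Let $\xi\in\CH^1(X_N)$ be the canonical degree-one class with $(2g-2)\xi=\omega_{X_N}$; by the discussion of Example~\ref{eg:diagonal} this is an admissible base class for the Ceresa cycle in the sense of \cite{Zhang}, so Theorem~\ref{thm:main1} (taken with base class $\xi$) makes the Ceresa cycle of $X_N$ attached to $\xi$ of infinite order in $\CH^{g-1}(\Jac(X_N))$ under the hypothesis on $N$. By Zhang's theorem \cite[Theorem~1.5.5]{Zhang} this is equivalent to $\Delta_{GKS}(X_N,\xi)$ being of infinite order in $\CH^2(X_N^3)$, which is Theorem~\ref{thm:main2} for the base class $\xi$.

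To reach an arbitrary base point $o\in X_N$, I would use \cite[eq.~(6)]{DRS}, which writes $\Delta_{GKS}(X_N,o)-\Delta_{GKS}(X_N,\xi)$ as a $\BZ$-linear combination of \emph{decomposable} $1$-cycles on $X_N^3$ --- built by intersecting pullbacks, along the partial projections $X_N^3\to X_N^2$ and $X_N^3\to X_N$, of the diagonal $\Delta_{X_N}$ and of divisor classes supported on $o$ and $\xi$. (For cuspidal base points the comparison is immediate, since cusp differences are torsion by Manin--Drinfeld.) The non-triviality of $\Delta_{GKS}(X_N,\xi)$ coming from Theorem~\ref{thm:main1} is detected by an invariant --- a height pairing of Gross--Zagier/Gross--Schoen type, equivalently an Abel--Jacobi regulator on the Künneth piece $\rH^1\otimes\rH^1\otimes\rH^1$ --- which the pullback formula identifies with a nonzero multiple of $L'(f,1)$ and which factors through the quotient of $\CH^2(X_N^3)\otimes\BQ$ by decomposable cycles; hence its value on $\Delta_{GKS}(X_N,o)$ agrees with its value on $\Delta_{GKS}(X_N,\xi)$ and is nonzero, so $\Delta_{GKS}(X_N,o)$ has infinite order for every $o$.

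The genuine content is that of Theorem~\ref{thm:main1} --- the pullback formula for special divisors on $X_N$ diagonally embedded in $X_N^3$, its identification with special $0$-cycles and Chow--Heegner divisors on the ambient orthogonal Shimura variety, and the analytic non-vanishing of $L'(f,1)$. The part new to Theorem~\ref{thm:main2} is only the base-point bookkeeping above, and the point I expect to require actual care --- the main, if modest, obstacle --- is confirming that the non-triviality invariant factors through the quotient by decomposable cycles, so that \cite[eq.~(6)]{DRS} forces it to be base-point independent; this is precisely what licenses the words ``any chosen base point'' in the statement.
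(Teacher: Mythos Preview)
Your proposal runs the deduction in the reverse direction from the paper, and this creates a circularity: in the paper Theorem~\ref{thm:main1} is \emph{deduced from} Theorem~\ref{thm:main2} (via Colombo--van Geemen \cite[Prop.~2.9]{CG}), not the other way around. The engine is Theorem~\ref{th-main}, which proves directly that $\AJ(\Delta_{GKS}(X_N,\infty))\neq 0$ by producing a correspondence $Z\in\CH^1(X_N\times X_N)$ for which the Chow--Heegner divisor $\Pi_Z(\Delta_{GKS}(X_N,\infty))$ equals, modulo cusp-supported torsion, a non-torsion Heegner divisor; this is where the pullback formula and Gross--Zagier enter. Base-point independence is then Proposition~\ref{pr24}, obtained from an explicit splitting $J(\bigwedge^3 H^1)=J(\wpr)\oplus J(\theta H^1)$ (Lemma~\ref{l1}): Lemma~\ref{l2} places $\AJ(\Delta_{GKS}(X_N,\infty))$ in $J(\wpr)$ (using that $\Gamma$ is torsion-free, so the canonical class is cusp-supported), while Lemma~\ref{l3} places $\AJ(\Delta_{GKS}(X_N,o)-\Delta_{GKS}(X_N,\infty))$ in the complementary summand $J(\theta H^1)$. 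Only after this does the paper pass to the Ceresa cycle.

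Even setting the circularity aside, there is a gap between your second and third paragraphs. Zhang's theorem gives nontriviality of $\Delta_{GKS}(X_N,\xi)$ \emph{in the Chow group}; it does not by itself furnish an Abel--Jacobi-type invariant insensitive to the decomposable difference in \cite[eq.~(6)]{DRS}. You patch this by invoking ``an invariant\ldots which the pullback formula identifies with a nonzero multiple of $L'(f,1)$,'' but that invariant is precisely what Theorem~\ref{th-main} produces directly on the GKS side (and note: it is Gross--Zagier for a Heegner point obtained as a Chow--Heegner divisor, not a triple-product height pairing on $\Delta_{GKS}$ itself). Once you are using that, the detour through the Ceresa cycle and Zhang is superfluous. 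Your ``factors through the quotient by decomposable cycles'' is the right intuition and is exactly what the paper's projection to $J(\wpr)$ makes precise --- but the paper applies it to the GKS cycle from the outset rather than routing through Theorem~\ref{thm:main1}.
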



    

\subsection{Special divisors on orthogonal Shimura varieties and a pullback formula}

The method to prove Theorem \ref{thm:main1} and Theorem \ref{thm:main2} was inspired by the work of Eskandari--Murty \cites{EM,EMPAMS} in which the authors showed the Ceresa cycle associated to the degree $p$ Fermat curve $F_p:x^p+y^p=z^p$ for $p>7$ prime is nontrivial in the Chow group for any chosen base point. Their work relies on the construction of the Chow--Heegner divisors in the work of Darmon--Rotger--Sols \cite{DRS}, in which the authors obtain a point $\Pi_Z$ in the Jacobian of a curve $X$ by choosing a correspondence $Z \in \CH^1(X \times X)$. The point $\Pi_Z \in \Jac(X)$ being non-torsion then implies that the cycle $X^+_o-X^-_o$ is nontrivial in $\CH^{g-1}(\Jac(X))$. We will review and discuss the construction of the Chow--Heegner divisor in Section \ref{sec:ChowHeegner}. 

In the theory of Shimura varieties, a modular curve $X$ and its self product $X^2=X\times X$ can be realized as Shimura varieties of orthogonal type. The diagonal embedding $\delta: X \hookrightarrow X^2$ can be obtained by a natural inclusion of quadratic spaces. There are families of natural special cycles on orthogonal Shimura varieties described in the work of Kudla \cite{Kud97}. The Heegner divisors on $X$ and the Hecke correspondences on $X^2$ arise this way as special divisors. Moreover, the Chow--Heegner divisors are obtained from pullbacks of special divisors on $X^2$ by $\delta^*$. In Section \ref{sec:explicitcover}, we discuss these constructions following Bruinier--Yang \cite{BY}. As was discussed in the last paragraph, the existence of any non-torsion Chow--Heegner divisor implies the non-triviality of the Ceresa cycle associated to the modular curve $X$. 

Thus, we raise a more general question in this setting. Namely, given an embedding of orthogonal type Shimura varieties $ \iota\colon \Sh(W)_K \hookrightarrow \Sh(V)_{\Tilde{K}}$ associated to quadratic spaces $W$ and $V$, which special cycles on $\Sh(W)_K$ can be obtained as pullbacks of special cycles on $\Sh(V)_{\Tilde{K}}$ via $\iota$?

In this paper, we discuss a special case of this problem by setting $\Sh(W)_K = X_N$ where $X_N$ is the modular curve defined in Theorem \ref{Ceresa-concrete} and $\Sh(V)_{\Tilde{K}} = X_N \times X_N$ with $\iota=\delta$ being the diagonal embedding. We obtain the following result, which shows all Heegner divisors \footnote{By Heegner divisor, we mean special divisors $Z_{X_N}(m_0,\mu_0)$ defined in Section \ref{subsec:specialdivisor}. Following Equation \eqref{eq:Zm0mu0} and Remark \ref{rmk:Heegner}, these are the pullback of Heegner divisors on $X_0(N)$.} on $X_N$ can be obtained from pullback.

\begin{thm}\label{tthm:modularpullback}
    Let $X_N$ be the modular curve defined in Theorem \ref{Ceresa-concrete} and $ \delta: X_N \hookrightarrow X_N\times X_N$ be the diagonal embedding. For any Heegner divisor $Z_{X_N}(m_0,\mu_0)$ on $X_N$, there exists a special divisor $Z \in \CH^1(X_N\times X_N)$ satisfying $Z_{X_N}(m_0,\mu_0)=\delta^*(Z)$.
\end{thm}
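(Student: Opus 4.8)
The plan is to deduce the theorem from an explicit pullback formula for special divisors along the diagonal — the technical core advertised in the abstract — which exhibits $\delta^{*}$ as ``upper triangular'' for the filtration of special divisor classes by the norm parameter. Recall from the realization of $X_N$ as an orthogonal Shimura variety (following Bruinier--Yang~\cite{BY}; see Section~\ref{sec:explicitcover}) that $X_N=\Sh(W)_K$ for a rank-$3$ quadratic space $W$, that $X_N\times X_N=\Sh(V)_{\widetilde K}$ for a rank-$4$ space $V\supset W$ — concretely $V=M_2(\BQ)$ with the determinant form and $W$ its trace-zero part — and that $\delta\colon X_N\hookrightarrow X_N\times X_N$ is induced by $W\hookrightarrow V$, whose orthogonal complement $W_0:=W^{\perp}$ is a \emph{definite line} (the scalar matrices). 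Fix the lattice $L\subset V$ cutting out $\widetilde K$, put $L_W=L\cap W$, $L_0=L\cap W_0$, and write $Z_V(m,\widetilde\mu)$, $Z_{X_N}(m,\mu)$ for the special divisors of Section~\ref{subsec:specialdivisor}; the classes of special divisors span $\BQ$-subspaces of $\CH^1(X_N\times X_N)$ and $\CH^1(X_N)$, closed under $\BQ$-linear combination.

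First I would establish the pullback formula: for $m_0>0$ and a discriminant class $\widetilde\mu$ of $L$ restricting to $\mu_0$ along $W$ and to $0$ along $W_0$,
\begin{equation}\label{eq:pbsketch}
\delta^{*}Z_V(m_0,\widetilde\mu)\;=\;\lambda\,Z_{X_N}(m_0,\mu_0)\;+\;\sum_{i}a_i\,Z_{X_N}(m_i,\mu_i)\;+\;B_{m_0,\widetilde\mu},\qquad\lambda\in\BQ_{>0},
\end{equation}
a finite sum in which every $m_i$ satisfies $0<m_i<m_0$, and $B_{m_0,\widetilde\mu}$ is a boundary class — supported on cusps and a multiple of the Hodge class $c_1(\cL_{X_N})$ — that vanishes unless $m_0$ is represented by $W_0$. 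This comes from the standard unfolding: a vector $x\in L'$ contributing to $Z_V(m_0,\widetilde\mu)$ decomposes uniquely as $x=x_W+x_0$ with $x_0\in W_0$; then $x^{\perp}\cap W=x_W^{\perp_W}$, and $x_W$ contributes to $Z_{X_N}(m_0-|Q(x_0)|,\cdot)$ precisely when $m_0-|Q(x_0)|>0$. As $W_0$ is a definite line, only finitely many $x_0$ survive: $x_0=0$ yields the leading term $\lambda\,Z_{X_N}(m_0,\mu_0)$ (so $\lambda>0$), the $x_0\ne0$ with $|Q(x_0)|<m_0$ yield the middle sum of strictly smaller parameter, and the remaining $x_0$ — where $x_W$ is isotropic, or where $\delta(X_N)$ is itself a component of $Z_V(m_0,\widetilde\mu)$ so that the contribution is the excess intersection $\delta^{*}[\delta(X_N)]=c_1(N_{\delta})=-c_1(\omega_{X_N})$ — assemble into $B_{m_0,\widetilde\mu}$.

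Granting \eqref{eq:pbsketch}, I would conclude by downward induction on $m_0$. If $Z_{X_N}(m_0,\mu_0)=0$ there is nothing to prove. Otherwise each $Z_{X_N}(m_i,\mu_i)$ occurring in the sum has $m_i<m_0$, so by the inductive hypothesis it is $\delta^{*}$ of a special divisor; moreover $c_1(\cL_{X_N})$ is a rational multiple of $\delta^{*}c_1(\cL_V)$ (because $\delta^{*}\cL_V$ is a power of $\cL_{X_N}$) and $c_1(\cL_V)$ is, up to sign, the special divisor $Z_V(0,0)$, so the Hodge part of $B_{m_0,\widetilde\mu}$ is a pullback of a special divisor — and the cuspidal part is one as well, as one verifies from the explicit effect of $\delta^{*}$ on the boundary divisors of $X_N\times X_N$. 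Subtracting all of these from $\tfrac{1}{\lambda}Z_V(m_0,\widetilde\mu)$ produces a $\BQ$-linear combination $Z$ of special divisor classes on $X_N\times X_N$ with $\delta^{*}Z=Z_{X_N}(m_0,\mu_0)$; such a $Z$ is by definition a special divisor in $\CH^1(X_N\times X_N)$, which proves the theorem. The base case is the least $m_0$ with $Z_{X_N}(m_0,\mu_0)\ne0$, where \eqref{eq:pbsketch} has no middle sum.

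I expect the substantive work to lie in \eqref{eq:pbsketch}, at two points. First, the discriminant-form bookkeeping: the natural map $L_W'/L_W\oplus L_0'/L_0\to L'/L$ is in general neither injective nor surjective, so one must establish existence of a class $\widetilde\mu$ as required — or, failing that, replace $Z_V(m_0,\widetilde\mu)$ by a suitable $\BQ$-combination of the $Z_V(m_0,\widetilde\mu_i)$ — verify that $\lambda$ is nonzero, and track how the $W_0$-translation moves the discriminant class in the middle terms; here the concrete model $V=M_2(\BQ)$ and the explicit lattices of Section~\ref{sec:explicitcover} should keep things manageable. Second, the boundary term $B_{m_0,\widetilde\mu}$: pinning down exactly which $x$ force $\delta(X_N)\subseteq Z_V(m_0,\widetilde\mu)$, evaluating the ensuing excess intersection, and exhibiting the outcome as a pullback of a special divisor — this last step is where working in $\CH^1\otimes\BQ$, as in the statement, is used. (One could instead route through the covering $X_N\to X_0(N)$ of Remark~\ref{rmk:Heegner} and the classical fact that Heegner points on $X_0(N)$ are cut out by Hecke correspondences on $X_0(N)\times X_0(N)$; but the orthogonal-Shimura-variety formulation is what makes the triangular structure, and hence the induction, transparent.)
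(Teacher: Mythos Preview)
Your proposal is correct and follows essentially the same route as the paper: the key input is the pullback formula (Lemma~\ref{lem:pullback}) obtained by decomposing $x=x_W+x_0$ along $V=W\oplus W^\perp$, which exhibits $\delta^*$ as upper-triangular in the norm parameter, and the theorem then follows by induction on $m_0$ (Proposition~\ref{prop:mainprop}).

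Two places where the paper is cleaner than your sketch anticipates. First, the discriminant-form bookkeeping you flag as a potential difficulty does not arise: the lattice is \emph{chosen} to be $L=L_W\oplus P$, so $L'/L=L'_W/L_W\oplus P'/P$ splits on the nose; one simply takes $\widetilde\mu=\mu_0$ (with $\mu^+=0$), and then the leading coefficient is $\lambda=a_P(0,0)=1$. This splitting is precisely the reason for passing from $X_0(N)$ to the cover $X_N$. Second, the cuspidal boundary term is not handled by pulling back boundary divisors of $X_N\times X_N$ as you suggest; instead, Lemma~\ref{lem:boundary} shows the extra contribution is supported on cusps, Manin--Drinfeld collapses it to $c\cdot(\infty)$ in $\CH^1_\BQ$, and then $(\infty)$ is a rational multiple of $[\omega_W]=-Z_{X_N}(0,0)=-\iota_1^*X_N$, which is already covered by the base case of the induction via the adjunction formula~\eqref{eq: Adj}.
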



This result is proved in Section \ref{sec:explicitcover}.  We then obtain our Theorems \ref{thm:main1} and \ref{thm:main2} in Section \ref{sec:mainproof}, by combining Theorem \ref{tthm:modularpullback} with the existence of non-torsion Heegner divisors on $X_0(N)$ in the presence of a weight $2$ cusp form with nonvanishing $L'(f,1)$.

\subsection{History and future directions}\label{subsec:future}
The Ceresa and modified diagonal cycles have been the subject of several decades of study.  In \cite{Harris}, Harris numerically computed an invariant deduced from Hodge theory, the harmonic volume, to show the Ceresa cycle associated to the degree $4$ Fermat curve $F_4: x^4+y^4=z^4$ is algebraically nontrivial. In \cite{Bloch}, Bloch further proved that the Ceresa cycle is of infinite order in the Griffiths group for this particular genus $3$ curve $F_4$. The work of Harris and Bloch relied on the fact that the Jacobian of $F_4$ is isogenous to the triple self-product of an elliptic curve with complex multiplication by the imaginary quadratic field $\mathbb{Q}(i)$. 


The construction of Chow--Heegner divisors relies on correspondences $Z \in \CH^1(X \times X)$.
 When the curve $X$ is modular, every correspondence $Z$ is a linear combination of Hecke correspondences. In this setting,
the Chow--Heegner divisor construction has been used to construct explicit points on modular Jacobians and elliptic curves in \cites{DDLR,DRS}. As was discussed in \cite[Lemma 10]{DF}, for a given modular curve, we can pick a Hecke correspondence such that the divisor $\Pi_Z$ is supported on Heegner points. One would naturally hope to show that the Neron--Tate height of the divisor $\Pi_Z$ is nonzero and use this information to deduce the nontriviality of $\Delta_{GKS}$. But as mentioned in \cite[Page 419]{DF}, it is in general hard to assert the non-triviality of $\Pi_Z$ for $Z$ a Hecke correspondence. However, after an earlier version of this paper was posted online, Lupoian--Rawson \cite{LR} directly computed some Chow--Heegner divisors $\Pi_Z$ for modular curves $X_0(N)$ and correspondences including the Atkin--Lehner involution and Hecke correspondences $T_2$ and $T_3$. Furthermore, they gave a local criterion deducing the non-triviality of $\Pi_Z$ and the Ceresa cycles associated to $X_0(N)$. 

In this paper, we utilize a different perspective on this problem. We first pick a non-torsion Heegner divisor $P_{D,r}+P_{D,-r} -2H_D(\infty) \in \Jac(X)$ where $\infty$ is the cusp coming from $\infty \in \overline{\BH}$ and the construction of Heegner divisors is recalled in Section \ref{sec:Heegner}. The existence of such a divisor is guaranteed by the work of Gross--Zagier \cite{GZ} and Gross--Kohnen--Zagier \cite{GKZ} whenever there exists a weight $2$ normalized new form $f \in \mathcal{S}^{new}_2(\Gamma(N))^-$ satisfying $L'(f,1) \ne 0$. Then we show the existence of a linear combination of Hecke correspondences $Z \in \CH^1(X \times X)$ for which the Chow--Heegner divisor $\Pi_Z$ exactly equals $P_{D,r}+P_{D,-r}-2H_D\infty$. To show the existence of such a correspondence, we study the pullback of special divisors on $X \times X$ via the diagonal embedding $X \hookrightarrow X \times X$ and prove Theorem \ref{tthm:modularpullback}. 

Note that the existence of a non-torsion Heegner divisor $P_{D,r}+P_{D,-r}-2H_D\infty$ on a modular curve does not imply the nontriviality of its Ceresa or $\Delta_{GKS}$ cycle with respect to any base point, as can be seen for the modular curve $X_0(37)$. This curve is of genus $2$, hence hyperelliptic with trivial Ceresa cycle if a Weierstrass point (or the canonical class) is taken as base point. However, the space $\mathcal{S}^{new}_2(\Gamma_0(37))^-$ is of dimension $1$ with an element $f$ satisfying $L'(f,1) \ne 0$, which gives rise to a non-torsion Heegner divisor $P_{D,r}+P_{D,-r}-2H_D\infty$ on $X_0(37)$. This divisor cannot be realized as a Chow--Heegner divisor if the base point of the cycle $\Delta_{GKS}$ is a Weierstrass point, but it can be realized as a Chow--Heegner divisor if the base point is chosen to be the cusp $\infty$ on $X_0(37)$. This reflects the nontriviality of the Ceresa and $\Delta_{GKS}$ cycles associated with $X_0(37)$ when the base point is chosen to be a rational cusp.  We refer to Section \ref{subsec:basepoint} for more discussion. 

There are two reasons for us to work with the modular curve $X_N$ defined in Theorem \ref{thm:main1} instead of $X_0(N)$. For one, the canonical class of $X_N$ contains a divisor supported on cusps which we prove in Lemma \ref{l2}; the second reason is that we can explicitly construct special divisors on $X_N\times X_N$ which pull back to Heegner divisors on $X_N$ via the diagonal embedding, which we discuss in Section \ref{sec:explicitcover}. Actually, with a little more work, we can replace $X_N$ by $X_1(N)$.  But the pullback formula is not as pretty as Theorem \ref{tthm:modularpullback}, and we prefer to keep this exposition as clean as possible. In a follow-up paper, we will study pullback formulas between more general orthogonal-type Shimura varieties and deduce results on other modular curves and Shimura curves.  More generally, in Section \ref{S5.5}, we give a discussion on the scope of using the Chow--Heegner divisor construction to detect nontriviality of the Ceresa and $\Delta_{GKS}$ cycles for general algebraic curves.

Finally, Sections \ref{S5.3} and \ref{S5.4} address the effect of the choice of base point. We show that the difference of the Abel--Jacobi images of $\Delta_{GKS}$ cycles with two different base points lies in a specific direct summand of the intermediate Jacobian, called $J(\theta H^1)$.  On the other hand, when $X=X(\Gamma)$ with $\Gamma$ torsion-free and $\infty$ is a cusp, the Abel--Jacobi image of $\gks(X,\infty)$ lies in the complementary direct summand $J(\wpr)$.  This explains why, when the latter image is nontrivial, the change of base point cannot ``kill'' the nontriviality of the $\Delta_{GKS}$ cycle, and studying the Abel--Jacobi images allows us to deduce nontriviality of the Ceresa cycle from the nontriviality of the $\Delta_{GKS}$ cycle using work of Colombo--van Geemen \cite{CG}.

\vspace{.5 cm}

\textit{Acknowledgments:} The authors thank Jennifer Balakrishnan, Alexander Betts, Raymond van Bommel, Xiaojiang Cheng, Edgar Costa, Henri Darmon, William Duke, Jordan Ellenberg, Ziyang Gao, Sachi Hashimoto, Emmanuel Kowalski, Bjorn Poonen, Yunqing Tang, Boya Wen, Wei Zhang for helpful discussions and providing references throughout the work on this project.  

TY visited the Morningside Center of Mathematics in Beijing and Max Planck Institute for Mathematics at Bonn  while working on this project. He thanks both institutes for their  support and excellent working environment. 

MK, WL, and CQ attended the conference ``The Ceresa Cycle in Arithmetic and Geometry'' at ICERM while working on the project. They thank the organizers for their work and ICERM for its hospitality.

MK was partially supported by NSF grant DMS-2101482 and a gift from the Simons Foundation. WL was partially  supported by NSF grant DMS-2302511. TY was partially supported by UW-Madison's  Kellet mid-career award.

\section{The Ceresa cycle and the Gross--Kudla--Schoen cycle}\label{sec:cycle}

\subsection{The Ceresa cycle and the Gross--Kudla--Schoen cycle}

Let $X$ be a smooth projective curve defined over a field $K$ with a point $o \in X(K)$. Then associated to the data $(X,o)$, one can construct algebraic $1$-cycles on the triple product $X^3:=X \times X \times X$ and on the Jacobian of $X$ $\Jac(X)$, as we now describe.

For each non-empty set $\I \subset \{1,2,3\}$, let $\imath_{\I}^o: X \hookrightarrow X^3$ be the embedding sending a point $P \in X$ to $P$ for indices $i \in \I$ and to $o$ for indices $i \notin \I$. For each $\I \subset \{1,2,3\}$, we use $X_{\I}$ to denote the image $\imath^o_{\I}(X)$ in $X^3$.

\begin{defn}[\cite{GK,GS}]
    The \textit{Gross--Kudla--Schoen modified diagonal cycle}  associated to $(X,o)$ is the algebraic $1$-cycle in $X^3$ defined by
    \[ \Delta_{GKS}(X,o) := X_{123}-X_{12} -X_{23}-X_{13} + X_{1}+X_{2}+X_{3} \in Z^2(X^3). \]
The cycle $\Delta_{GKS}(X,o)$ gives rise to a class in $\CH^2(X^3)$ and is homologically trivial, also denoted by $\Delta_{GKS}(X,o)$.
\end{defn}

\begin{defn}[\cite{Ceresa}]
    The \textit{Ceresa cycle} associated to $(X,o)$ is the algebraic $1$-cycle in the Jacobian variety $\Jac(X)$ defined by 
    \[X^+_o - X^-_o \in Z^{g-1}(\Jac(X)) ,  \]
where $X^+_o$ is the embedding of $X$ in $\Jac(X)$ by $P\mapsto \AJ(P-o)=(P) -(o)$, and $X^-_o$ is the image of $X^+$ under the multiplication by $-1$ map on $\Jac(X)$.

    The cycle $X^+_o-X^-_o$ gives rise to a class in $\CH^{g-1}(\Jac(X))$ and it is homologically trivial.
\end{defn}

Although the cycles $\Delta_{GKS}(X,o)$ and $X^+_o-X^-_o$ live in different Chow groups, their cycle classes under the Abel--Jacobi maps are closely related. By that, we mean let $\mu:X^3 \to \Jac(X)$ be the map given by $(x,y,z) \mapsto x+y+z-3o$ and it induces a map between Chow groups mapping $\mu_*(\Delta_{GKS}(X,o)) \in \CH^{g-1}_0(\Jac(X))$. By the work of Colombo--van Geemen \cite[Proposition 2.9]{CG}, the Abel--Jacobi images of $X^+_o-X^-_o$ and $\mu_*(\Delta_{GKS}(X,o))$ differ by a constant multiple. We will use this fact in the proof of Theorem \ref{thm:main1} to deduce the nontriviality of $X^+_o-X^-_o$ from the nontriviality of $\gks(X,o)$. Moreover,
by \cite[Theorem 1.5.5]{Zhang}, the class $\Delta_{GKS}(X,\xi) \in \CH^2(X^3)$ is nontrivial if and only if the class $X^+_\xi-X^-_\xi\in \CH^{g-1}(\Jac(X))$ is nontrivial if the basepoint $\xi$ is a degree $1$ divisor on $X$ satisfying $(2g-2)\xi\equiv K_X \in \CH^1(X)$ modulo torsion. 

\subsection{The Chow--Heegner divisor}\label{sec:ChowHeegner} Here we want to introduce the Chow--Heegner divisor described in \cite{DRS} with a slightly different construction.

Let $Z \subset X\times X$ be a $1$-cycle; then associated to $Z$, one can obtain a degree $0$ divisor called \textit{the Chow--Heegner divisor} $\Pi_Z(\Delta_{GKS}(X,o)) \in \CH^1(X)$ defined below.

For each non-empty set $\I \subset \{1,2,3\}$, let $\pi_{\I}$ denote the projection map $$\pi_{\I}: X^3 \to \prod_{i \in \I} X_i.$$
We will use $\pi_{\I,*}$ and $\pi_{\I}^*$ to denote the pushforward and pullback maps on Chow groups induced by the map $\pi_{\I}$.

\begin{defn}[Chow--Heegner divisor \cite{DRS}]\label{def:ChowHeegner}
    Let $Z\in \CH^1( X\times X)$ be a correspondence between $X$ and itself.  We define a degree $0$ divisor $\Pi_Z(\Delta_{GKS}(X,o)) \in \CH^1(X)$ by
    \begin{equation}
        \Pi_Z(\Delta_{GKS}(X,o)):= \pi_{3,*}\left( (\pi_{12}^*Z)\cdot \Delta_{GKS}(X,o) \right)
    \end{equation}
    where ``$\cdot$'' denotes the intersection product in $\CH^{\bullet}(X^3)$.
\end{defn}

The degree-$0$ divisor $\Pi_Z(\Delta_{GKS}(X,o)) \in \CH^1(X)$ gives rise to a point in $\Jac(X)$. In the case where the curve $X$ is modular and $Z$ is a Hecke correspondence, this point $\Pi_Z(\Delta_{GKS}(X,o))\in\Jac(X)$ has been studied and referred to as a Chow--Heegner point in the literature including \cite{DF,DRS} because it is a linear combination of Heegner points

In the next lemma, we explicitly compute the divisor $\Pi_Z(\Delta_{GKS}(X,o))$.
For each non-empty set $\J \subset \{1,2\}$, let $\jmath^o_{\J}: X \hookrightarrow X^2$ be the embedding sending a point $P \in X$ to $P$ for indices $j \in \J$ and $P \mapsto o$ for indices $j \notin \J$.

\begin{lem}\label{lem222}
    Let $Z\in \CH^1( X\times X)$ be a correspondence between $X$ and itself. Then, we have
    \[ \Pi_Z(\Delta_{GKS}(X,o))= \jmath^*_{12} (Z)- \jmath_1^{o,*}(Z)-\jmath_2^{o,*}(Z)- \deg(\jmath^*_{12} (Z)- \jmath_1^{o,*}(Z)-\jmath_2^{o,*}(Z))o \]
in $\CH^1(X)$.
\end{lem}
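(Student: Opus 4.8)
The strategy is to compute each of the seven terms in $\Pi_Z(\Delta_{GKS}(X,o)) = \pi_{3,*}\left( (\pi_{12}^*Z)\cdot \Delta_{GKS}(X,o) \right)$ separately, using the decomposition $\Delta_{GKS}(X,o) = X_{123}-X_{12}-X_{23}-X_{13}+X_1+X_2+X_3$, and then recombine. The key observation is that for each of the coordinate subvarieties $X_{\I}$, the composite $\pi_3 \circ (\text{inclusion of } X_{\I} \text{ into } X^3)$ is either the identity on $X$ (when $3 \in \I$), the constant map to $o$ (when $3 \notin \I$), so that intersecting $\pi_{12}^* Z$ with $X_\I$ and pushing forward by $\pi_3$ reduces to a pullback of $Z$ along an appropriate map $X \to X^2$ (or to a multiple of the point $o$, by degree bookkeeping).

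\textbf{Term-by-term computation.} Concretely, I would argue as follows for each $\I$. The inclusion $\imath^o_\I\colon X \hookrightarrow X^3$ composed with $\pi_{12}\colon X^3 \to X_1 \times X_2$ is precisely the map $\jmath^o_{\I \cap \{1,2\}}\colon X \to X^2$ (sending $P$ to $P$ in the slots indexed by $\I \cap \{1,2\}$ and to $o$ elsewhere), while $\pi_3 \circ \imath^o_\I$ is $\id_X$ if $3 \in \I$ and the constant map $c_o$ if $3 \notin \I$. By the projection formula and compatibility of pullback/pushforward with these maps, $\pi_{3,*}\left( (\pi_{12}^*Z) \cdot X_\I \right)$ equals $(\jmath^o_{\I\cap\{1,2\}})^*(Z)$ when $3\in\I$, and equals $\deg\!\left((\jmath^o_{\I\cap\{1,2\}})^*(Z)\right)\cdot o$ when $3\notin\I$. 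Applying this to the seven terms:
\begin{itemize}
\item $X_{123}$ gives $\jmath^*_{12}(Z)$;
\item $X_{12}$ gives $\deg(\jmath^*_{12}(Z))\, o$;
\item $X_{13}$ gives $\jmath^{o,*}_1(Z)$;
\item $X_{23}$ gives $\jmath^{o,*}_2(Z)$;
\item $X_{1}$ gives $\deg(\jmath^{o,*}_1(Z))\, o$;
\item $X_{2}$ gives $\deg(\jmath^{o,*}_2(Z))\, o$;
\item $X_{3}$ gives $(\jmath^{o,*}_\emptyset(Z))$, i.e. the pullback of $Z$ along the constant map $P \mapsto (o,o)$, which is $\deg(Z\cdot (\{o\}\times\{o\}))\cdot o = 0$ as a divisor class of degree...
\end{itemize}
Here one must be slightly careful: the term $X_3 = \imath^o_{\{3\}}(X)$ intersected with $\pi_{12}^* Z$ is supported over $(o,o)$ in the first two coordinates, and since $\pi_{12}^* Z$ restricted to $\{o\}\times\{o\}\times X$ is $\deg(Z|_{(o,o)})\cdot (\{o\}\times\{o\}\times X)$ — but $\pi_{12}^*Z$ is a divisor so its restriction to this curve $X_3$ is a $0$-cycle of the wrong dimension unless interpreted via excess intersection; in fact $\pi_{12}^* Z$ pulled back to $X_3$ along $\imath^o_{\{3\}}$ is $(\jmath^o_\emptyset)^* Z = (c_{(o,o)})^* Z$, a degree-zero operation giving $0$ in $\CH^1(X)$ (as $Z$ has finite degree and we work modulo torsion, or more simply because $c_{(o,o)}^*$ of a divisor on a surface lands in $\CH^1(\pt) = 0$). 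So the $X_3$ contribution vanishes.

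\textbf{Recombining.} Summing with the signs from $\Delta_{GKS}$, I get
\[
\Pi_Z(\Delta_{GKS}(X,o)) = \jmath^*_{12}(Z) - \jmath^{o,*}_1(Z) - \jmath^{o,*}_2(Z) - \Big(\deg(\jmath^*_{12}(Z)) - \deg(\jmath^{o,*}_1(Z)) - \deg(\jmath^{o,*}_2(Z))\Big)\, o,
\]
which, since $\deg$ is additive, is exactly
\[
\jmath^*_{12}(Z) - \jmath^{o,*}_1(Z) - \jmath^{o,*}_2(Z) - \deg\!\left(\jmath^*_{12}(Z) - \jmath^{o,*}_1(Z) - \jmath^{o,*}_2(Z)\right) o,
\]
as claimed. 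Finally, one checks this divisor has degree $0$, consistent with $\Pi_Z(\Delta_{GKS}(X,o)) \in \CH^1(X)$ being a degree-$0$ class landing in $\Jac(X)$; this is automatic from the shape of the last term.

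\textbf{Main obstacle.} The genuinely delicate point is justifying the intersection-theoretic manipulations when the cycles are not in general position: $\pi_{12}^* Z$ is a divisor on the threefold $X^3$ and the $X_\I$ are curves, so $(\pi_{12}^* Z)\cdot X_\I$ is an honest $0$-cycle only when $X_\I \not\subset \mathrm{supp}(\pi_{12}^* Z)$. For $\I = \{3\}$ and for the excess cases one should replace $Z$ by a rationally equivalent cycle meeting everything properly (possible since we work with $\CH^\bullet \otimes \BQ$ and $X$ is a smooth projective curve, so $X^2$, $X^3$ are smooth and the moving lemma applies), or equivalently phrase everything via refined Gysin pullback $\imath_\I^! $ along the regular embedding $\imath_\I$ and the identity $\pi_{3,*}\,\imath_{\I,*}\,\imath_\I^!\,\pi_{12}^* = (\pi_3\circ\imath_\I)_*\,(\pi_{12}\circ\imath_\I)^*$, which is the projection/base-change formula for $l.c.i.$ morphisms in Fulton's framework. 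Once this functoriality is in place, each term collapses to a pullback of $Z$ along $\jmath^o_{\I\cap\{1,2\}}$ as above, and the computation is bookkeeping. So the plan is: first set up the correct intersection-theoretic formalism (moving lemma or refined Gysin maps over $\BQ$), then run the seven-term computation, then collect terms using additivity of degree.
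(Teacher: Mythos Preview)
Your proposal is correct and follows essentially the same term-by-term computation as the paper's proof. The one place where the paper is slightly cleaner is the $X_3$ term: rather than worrying about excess intersection or pullback along a constant map, it observes that $X_3=\pi_{12}^*\{(o,o)\}$ and uses that $\pi_{12}^*$ is a ring homomorphism, so $\pi_{12}^*Z\cdot X_3=\pi_{12}^*(Z\cdot\{(o,o)\})=0$ since $Z\cdot\{(o,o)\}\in\CH^3(X^2)=\{0\}$.
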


\begin{proof}
    This lemma follows from the following direct computation.  First, in $\CH^3(X^3)$ we have 
    \begin{multline*}
\pi_{12}^*Z\cdot\left(X_{123}-X_{12}-X_{23}-X_{13}+X_1+X_2+X_3\right)=\\
(\imath_{123})_*\jmath_{12}^*Z-(\imath_{12}^o)_*\jmath_{12}^*Z-(\imath_{23}^o)_*(\jmath_2^o)^*Z-(\imath_{13}^o)_*(\jmath_1^o)^*Z
+(\imath_1^o)_*(\jmath_1^o)^*Z+(\imath_2^o)_*(\jmath_2^o)^*Z+0.
\end{multline*}
The last term is zero because $\pi_{12}^*\colon \CH^*(X^2)\to \CH^*(X^3)$ is a ring homomorphism, and so $$\pi_{12}^*Z\cdot X_3=\pi_{12}^*Z\cdot\pi_{12}^*\{(o,o)\}=\pi_{12}^*(Z\cdot\{(o,o)\})=0$$ as $Z\in \CH^1(X^2)$ and $\{(o,o)\}\in \CH^2(X^2)$ have product in $\CH^3(X^2)=\{0\}$.  Applying $(\pi_3)_*$ and reordering terms now gives
$$\jmath_{12}^*Z-(\jmath_1^o)^*Z-(\jmath_2^o)^*Z-(\deg(\jmath_{12}^*Z)-\deg((\jmath_1^o)^*Z)-\deg((\jmath_2^o)^*Z))o,$$
hence the result.
\end{proof}

From the construction, it is clear that if $\Delta_{GKS}(X,o) \in \CH^2(X^3)$ is a trivial class then the class $\Pi_Z(\Delta_{GKS}(X,o)) \in \Pic^0(X)$ is torsion for any $Z \in \CH^1(X \times X)$.  We state this in contrapositive form in the following Lemma.

\begin{lem}\label{lem:Z}
    If there exists a correspondence $Z \in \CH^1(X \times X)$ such that $\Pi_Z(\Delta_{GKS}(X,o))\in \Jac(X)$ is non-torsion, then $\Delta_{GKS}(X,o) \in \CH^2(X^3)$ is nontrivial. 
    
    Moreover, if the base point $o$ is a degree $1$ divisor on $X$ satisfying $(2g-2)o=K_X \in \CH^1(X)$,  following \cite[Theorem 1.5.5]{Zhang}, the Ceresa cycle $X^+_o-X^-_o\in\CH^{g-1}(\Jac(X))$ is also nontrivial.
\end{lem}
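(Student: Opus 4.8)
The plan is to establish the contrapositive of the first assertion and then quote Zhang's comparison theorem for the ``moreover'' clause. First I would note that, by Definition~\ref{def:ChowHeegner}, for a fixed correspondence $Z$ the assignment
\[
W \longmapsto \pi_{3,*}\bigl((\pi_{12}^*Z)\cdot W\bigr)
\]
is a $\BQ$-linear map $\CH^2(X^3)\to\CH^1(X)$: the class $\pi_{12}^*Z$ is fixed, intersection with a fixed class is additive, and the proper pushforward $\pi_{3,*}$ is additive, all at the level of rational Chow groups --- this is precisely the formalism that makes $\Pi_Z$ well-defined. Consequently, if $\Delta_{GKS}(X,o)\in\CH^2(X^3)$ is trivial, i.e.\ vanishes in $\CH^2(X^3)\otimes\BQ$ in the sense of our running convention, then $\Pi_Z(\Delta_{GKS}(X,o))$ vanishes in $\CH^1(X)\otimes\BQ$, hence defines a torsion class in $\Jac(X)$. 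Taking the contrapositive gives exactly the first statement: if $\Pi_Z(\Delta_{GKS}(X,o))$ is non-torsion for some $Z\in\CH^1(X\times X)$, then $\Delta_{GKS}(X,o)$ is nontrivial in $\CH^2(X^3)$.

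For the second statement, when the base point $o$ is a degree-$1$ divisor with $(2g-2)o=K_X$ in $\CH^1(X)$ modulo torsion, I would simply invoke \cite[Theorem 1.5.5]{Zhang}, which asserts that $\Delta_{GKS}(X,o)$ is nontrivial in $\CH^2(X^3)$ if and only if the Ceresa cycle $X^+_o-X^-_o$ is nontrivial in $\CH^{g-1}(\Jac(X))$. Combined with the first part, this yields the nontriviality of the Ceresa cycle.

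There is essentially no obstacle here: the argument is pure functoriality of Chow groups. The only point deserving a word of care is the bookkeeping around the convention that ``trivial'' means vanishing after $\otimes\,\BQ$, so that ``torsion in $\Jac(X)$'' and ``zero in $\CH^1(X)\otimes\BQ$'' coincide; once that is fixed, the linearity of $\pi_{12}^*$, of intersection with the fixed class $\pi_{12}^*Z$, and of $\pi_{3,*}$ does all the work.
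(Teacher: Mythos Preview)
Your argument is correct and matches the paper's approach exactly: the paper simply observes (in the sentence preceding the lemma) that ``from the construction, it is clear'' that triviality of $\Delta_{GKS}(X,o)$ forces $\Pi_Z(\Delta_{GKS}(X,o))$ to be torsion, then states the contrapositive as the lemma and cites Zhang for the ``moreover'' clause. You have spelled out the linearity underlying that observation more explicitly, but the content is identical.
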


This principle is applied repeatedly in sections \ref{sec:Heegner}-\ref{S5.4}. It is a natural and interesting problem to determine when the non-triviality of the $\Delta_{GKS}(X,o)$ cycle can be detected by $\Pi_Z(\Delta_{GKS}(X,o))$ for some correspondence $Z \in \CH^1(X\times X)$, and we give some Hodge theoretic discussion on this matter in Section \ref{S5.5}.

\subsection{Examples of Chow--Heegner divisors}\label{subsec:basepoint}
In this section, we give two examples of Chow--Heegner divisors which can be computed using Lemma \ref{lem222}. These examples have been computed in \cite{DRS} and \cite{DDLR}, we put them here to help our discussion.

\begin{eg}[\cite{DRS}*{Corollary 2.8}]\label{eg:diagonal}
    Let $Z \subset X \times X$ be the diagonal, then the Chow--Heegner divisor 
    \[ \Pi_Z(\Delta_{GKS}(X,o)) = (2g-2)o - K_X  \]
    which is computed in Equation \ref{en7}.
    
    In particular, if the base point $o$ does not satisfy $(2g-2)o=K_X \in \CH^1(X)$, then the cycle $\Delta_{GKS}(X,o) \in \CH^2(X^3)$ is nontrivial. If the goal is to study the triviality/nontriviality of the cycle $\Delta_{GKS}(X,o)$, then the only interesting choice is to take $o$ to be a degree $1$ divisor in $\CH^1(X)$ satisfying $(2g-2)o=K_X \in \CH^1(X)$ as in the work of Zhang \cite{Zhang}.
\end{eg}

\begin{eg}[\cite{DDLR}*{Section 5.1}]\label{eg:X0(37)}
    Let $X$ be the modular curve $X_0(37)$ and $o=\infty$ be the rational cusp coming from $\infty \in \overline{\BH}$. Then $X$ has automorphism group isomorphic to $\BZ/2\BZ \times \BZ/2\BZ$ generated by the Atkin--Lehner involution $\omega$ and the hyperelliptic involution $S$. (See \cite{MSD} for a detailed analysis of this curve.) Let $T$ be the involution given by $T=S \circ \omega=\omega\circ S$ and $Z \subset X\times X$ is the graph of $T$, meaning points on $Z$ are of the form $(P,T(P)) \in X \times X$. Then the Chow--Heegner divisor $\Pi_Z(\Delta_{GKS}(X,o))$ is a rational multiple of any rational Heegner divisor $P_{D,r}+P_{D,-r}-2H_D\infty$ on $X$. In particular, the Chow--Heegner divisor $\Pi_Z(\Delta_{GKS}(X,o))$ is non-torsion in $\Jac(X)$.

    On the other hand, if we take $o \in X(\overline{\BQ})$ to be a Weierstrass point, then the Chow--Heegner divisor $\Pi_Z(\Delta_{GKS}(X,o))$ is torsion in $\Jac(X)$. (Moreover, we have $(2g-2)o=K_X$, which is always true for Weierstrass points.) This agrees with the statement \cite[Proposition 4.8]{GS} that the cycle $\Delta_{GKS}(X,o)$ is trivial in $\CH^2(X^3)$ when $X$ is a hyperelliptic curve and $o$ is a Weierstrass point.
\end{eg}

The phenomenon in Example \ref{eg:X0(37)} occurs because the canonical class of $X$ does not contain a divisor supported on cusps.  As we show in Lemma \ref{l2}, for $X=X(\Gamma)$ with $\Gamma$ torsion free (e.g.~$X=X_N$), $K_X$ always \emph{does} contain such a divisor.  

\section{Pull-back of special divisors on modular curves}\label{sec:explicitcover}

In this section, we will study the pullback of special divisors on a modular curve $X \hookrightarrow X \times X$ via the diagonal embedding. Although we don't prove all special divisors on $X_0(N)$ can be obtained from the pullback of divisors on $X_0(N) \times X_0(N)$, we will construct a cover $\pi_W:X_N \to X_0(N)$ such that the preimage of Heegner divisors on $X_0(N)$ under $\pi_W$ can be obtained from pullback via the diagonal map $X_N \hookrightarrow X_N \times X_N$.

\subsection{Modular curves and product of modular curves as orthogonal Shimura varieties}\label{subsec:modularcurve} We start by recalling the definition of orthogonal Shimura varieties and restrict to the case of a modular curve $X$ and the product $X \times X$. We refer to \cite[Section 7.1]{BY} and \cite{YY} for this material.

Let $(W,Q)$ be a quadratic space over $\mathbb{Q}$ of signature $(n,2)$ with its bilinear form denoted as $(\cdot,\cdot)$. The Hermitian symmetric domain corresponding to $\GSpin(W)$ can be realized as a space of lines
\[\BD_W = \{  z \subset W(\BC) \mid Q(z)=0, (z,\bar{z})<0\}/\BC^\times.\]
For a compact open subgroup $K \subset \GSpin(W)(\BA_\rf)$, we obtain a Shimura variety
\begin{equation}\label{eq:complexuni}
    \Sh(W)_K = \GSpin(W)(\BQ)\backslash (\BD_W\times \GSpin(W)(\BA_\rf)/K).
\end{equation}
The Shimura variety $\Sh(W)_K$ is a quasi-projective variety of dimension $n$ defined over $\BQ$.

Every element of $\BD_W$ gives a $\BC$-line over a point on $\Sh(W)_K$. This line bundle is called \textit{the tautological line bundle} and we will denote it by $\mathcal{L}_W$.

Let $N$ be a positive integer. Define the quadratic space 
\[W:= \{ x \in M_2(\BQ) \mid \tr(x)=0 \}, \]
with the quadratic form $Q(x)=N\det(x)$, and the corresponding bilinear form $$(x,y)=N\cdot \tr(x\cdot  \textup{adj}(y)).$$ 
The space $W$ has signature $(1,2)$ and the group $\GSpin(W) \cong \GL_2$ acts on $W$ by conjugation.  The space $\BD_W$ can be identified with the union of the upper and lower half planes $\BH \cup \BH^-$ by
\begin{align*}
    \BH \cup \BH^- & \to \BD_W, \\
    z &\mapsto \begin{pmatrix}
        z & -z^2 \\ 1 & -z
    \end{pmatrix} .
\end{align*}
Under this identification, the conjugation action of $\GL_2$ on $\BD_W$ corresponds to linear fractional transformations on $\BH \cup \BH^-$.

Note that the space $\BD_W$ and the group $\GSpin(W)$ do not depend on the integer $N$. The special divisors to be defined in next subsection will depend on $N$.

Consider the compact open subgroup $K_0(N)\subset \GL_2(\BA_\rf)$ defined as
\[K_0(N) = \biggl\{\begin{pmatrix} a & b \\ c & d \end{pmatrix} \in \hbox{GL}_2(\hat \BZ):\,  c \equiv 0 \pmod N \biggr\}.\]

Since $K_0(N) \cap \GL_2^+(\BQ) = \Gamma_0(N)$, the corresponding Shimura variety $\Sh(W)_{K_0(N)}$ is nothing but the open modular curve $Y_0(N) = \Gamma_0(N) \backslash \BH$. Similarly, if we consider groups 
\begin{align*}
K_1(N) &= \biggl\{\begin{pmatrix} a & b \\ c & d \end{pmatrix} \in \hbox{GL}_2(\hat \BZ):\,  c \equiv 0 \bmod N,  d \equiv 1  \bmod N \biggr\},
\\
K(N) &= \biggl\{\begin{pmatrix} a & b \\ c & d \end{pmatrix} \in \hbox{GL}_2(\hat \BZ):\,  b \equiv  c \equiv 0 \bmod N,  d \equiv 1  \bmod N \biggr\},
\end{align*}
we obtain modular curves $Y_1(N) = \Sh(W)_{K_1(N)}$ and $Y(N)=\Sh(W)_{K(N)}$. For any congruence subgroup $\Gamma(N) \subset \Gamma \subset \SL_2(\BZ)$, we can take the group $K(\Gamma)$ to be the product of 
\begin{itemize}[leftmargin=1cm]
\item $\nu(\hat{\BZ}^\times)$ (where $\nu:\BA_\rf^\times \to \GL_2(\BA_\rf)$ is given by $\nu(a)={\rm diag}(1,a)$), and
\end{itemize}
\begin{itemize}[leftmargin=1cm]
\item the preimage of $\Gamma/\Gamma(N)$ (under the natural map $\GL_2(\hat{\BZ}) \to \GL_2(\BZ/N\BZ)$) in $\GL_2(\hat{\BZ})$,
\end{itemize}
then we obtain $\Sh(W)_{K(\Gamma)} = Y_\Gamma=\Gamma\backslash\BH$.

The modular curve $Y_\Gamma$ is open and we can compactify it by $X_\Gamma=\Gamma\backslash\overline{\BH}$, where $\overline{\BH}=\BH \cup \BP^1(\BQ)$. The points $\Gamma\backslash\BP^1(\BQ)$ on $X_\Gamma$ are called \textit{cusps}. The tautological line bundle $\mathcal{L}_W$ extends to $X_\Gamma$; its square $\mathcal{L}_W^2$ is the line bundle of modular forms of weight $2$ on $X_\Gamma$, which identifies with the canonical line bundle $\omega_W$ associated to the canonical divisor class $K_{X_\Gamma}$.

Next we consider the product $Y_\Gamma \times Y_\Gamma$ as an orthogonal Shimura variety where $Y_\Gamma=\Sh(W)_{K(\Gamma)}$ is a modular curve.

Define a quadratic space 
\[V:= \{x \in M_2(\BQ)  \}\]
with quadratic form $Q(x)=N \det (x)$. Then $V$ has signature $(2,2)$ and $W$ is naturally a quadratic subspace of $V$. The group $\GSpin(V)$ can be explicitly written as 
\[\GSpin(V) = \{ (b_1,b_2): b_1,b_2 \in \GL_2(\BQ), \det b_1 =\det b_2 \}, \]
and $(b_1,b_2)$ acts on $x \in V$ as $x \mapsto b_1xb_2^{-1}$.
The space $\BD_V$ can be identified with $\BH^2\cup (\BH^-)^2$ by
\begin{align*}
    \eta:\BH^2 \cup (\BH^-)^2 & \to \BD_V, \\
    (z_1,z_2) &\mapsto \begin{pmatrix}
       z_1  & -z_1z_2 \\ 1 & -z_2
    \end{pmatrix} .
\end{align*}
Under this identification, for an element $(b_1,b_2) \in \GSpin(V)$, its action on $\BD_V$ corresponds to the usual linear fractional transformation $(b_1,b_2)(z_1,z_2) = (b_1z_1,b_2z_2)$ on $\BH^2\cup(\BH^-)^2$.

Considering the compact open subgroup $$\tilde{K}_0:= (K_0(N)\times K_0(N))\cap \GSpin(V)(\BA_\rf) \subset \GSpin(V)(\BA_\rf),$$ we get the corresponding Shimura variety $$\Sh(V)_{\tilde{K}_0} = Y_0(N) \times Y_0(N) = \Gamma_0(N) \times \Gamma_0(N) \backslash \BH^2.$$ Similarly, for any congruence subgroup $\Gamma\subset \SL_2(\BZ)$,  by setting $\tilde{K}(\Gamma)=(K(\Gamma)\times K(\Gamma))\cap \GSpin(V)(\BA_\rf)$, we obtain $\Sh(V)_{\tilde{K}(\Gamma)} = Y_\Gamma \times Y_\Gamma = \Gamma \times \Gamma \backslash \BH^2$. We will denote the group $\Gamma \times \Gamma$ by $\Gamma_V$.

Recall that the space $W$ is naturally a quadratic subspace of $V$ which induces a natural embedding $Y_\Gamma \hookrightarrow Y_\Gamma \times Y_\Gamma$. This embedding is nothing but the diagonal embedding.

We can compactify $Y_\Gamma\times Y_\Gamma$ by considering $X_\Gamma \times X_\Gamma = \Gamma_V \backslash\overline{\BH}^2$. Like the case for $Y_\Gamma\hookrightarrow X_\Gamma$, the tautological line bundle $\mathcal{L}_V$ extends to $X_\Gamma \times X_\Gamma$, with sections given by modular forms in $2$ variables of weight $(1,1)$. Its square $\mathcal{L}_V^2$ is the line bundle of two-variable holomorphic modular forms of weight $(2,2)$, which identifies with the canonical line bundle $\omega_V$ on $X_\Gamma\times X_\Gamma$.

\subsection{Special divisors on modular curves and product of modular curves}\label{subsec:specialdivisor}
An orthogonal Shimura variety $\Sh(W)_K$ naturally comes with families of algebraic cycles arising from quadratic subspaces of $W$. In this section, we discuss special divisors on a modular curve $Y_\Gamma$ and its self-product $Y_\Gamma \times Y_\Gamma$, coming from certain Schwartz functions on $W(\BA_\rf)$ and $V(\BA_\rf)$ respectively. We start by constructing some lattices in $W$ and $V$.

Let $L_W \subset W$ be the following lattice 
\[ L_W:= \biggl\{ \begin{pmatrix}
    a & -b/N \\ c & -a
\end{pmatrix} \mid a,b,c \in \BZ \biggr\} \]
on which $Q(x)=N\det(x)$ takes integral values.

Then its dual lattice with respect to the bilinear form $(x , y ) = N \cdot \tr(x\cdot {\rm adj}(y))$ is
\[  L_W':= \biggl\{ \begin{pmatrix}
    a/2N & -b/N \\ c & -a/2N
\end{pmatrix} \mid a,b,c \in \BZ \biggr\}. \]

The group $L_W'/L_W$ is isomorphic to $\BZ/2N\BZ$ with isomorphism given by 
\begin{equation}\label{mur}
 \BZ/2N\BZ \to L'_W/L_W: \quad   r \mapsto \mu_r= \begin{pmatrix}
    r/2N & 0 \\ 0 & -r/2N
\end{pmatrix} .
\end{equation}

For any $\mu \in L_W'/L_W$, we obtain a Schwartz function on $W(\BA_\rf)$ given by 
\[ \phi_\mu = \Char(\mu +\hat{L}_W),\quad \text{where } \hat{L}_W=L_W \otimes \hat{\BZ}. \]

Note that the group $K_0(N)$ action preserves $\hat{L}_W$ and $\hat{L}'_W$. Moreover, the group $K_0(N)$ acts trivially on $L'_W/L_W \simeq \hat{L}'_W/\hat{L}_W$ by conjugation. Thus, the same holds for any subgroup $K(\Gamma) \subset K_0(N)$ which corresponds to a congruence subgroup $\Gamma\subset \SL_2(\BZ)$. Let $\Gamma$ be such a group and let $Y_\Gamma = \Gamma\backslash\BH$ be the corresponding modular curve, as constructed in Section \ref{subsec:modularcurve}.

Given an element $\mu_0 \in L'_W/L_W$ and a positive number $m_0 \in \BQ_{>0}$, we can define a special divisor
$Z_{Y_\Gamma}(m_0,\mu_0)$ on the modular curve $Y_\Gamma$ as 
\[Z_{Y_\Gamma}(m_0,\mu_0) = \sum_{x\in \Gamma\backslash \Omega_{m_0}(\BQ)} \phi_{\mu_0}(x)\textup{pr}(\BD_{x,W},1)\]
where $\Omega_{m_0} = \{x\in W \mid Q(x)=m_0\}$,  $\textup{pr}$ is the natural projection from $\BD_W \times \GSpin(W)(\BA_\rf)$ to $\Gamma\backslash\BH$ coming from Equation \eqref{eq:complexuni},  $\BD_{x,W}=\{z\in \BD_W \mid z \perp x\}$, and $\phi_{\mu_0}=\textup{char} (\mu_0 + \hat{L}_W)$.

If we let $\pi_\Gamma: Y_\Gamma \to Y_0(N)$ denote the natural map given by $\Gamma\backslash\BH \to \Gamma_0(N)\backslash\BH$, then by definition
\begin{equation}\label{eq:Zm0mu0}
    Z_{Y_\Gamma}(m_0,\mu_0) = \pi_\Gamma^*Z_{Y_0(N)}(m_0,\mu_0).
\end{equation}

\begin{rmk}\label{rmk:Heegner}
    We can follow \cite[Section 7.1]{BY} to describe elements $\mu_0 \in L_W'/L_W$ and $m_0\in \BQ_{>0}$ such that
\[Z_{Y_0(N)}(m_0,\mu_0) = P_{D,r}+P_{D,-r},\]
where $D=-4Nm_0\in \BZ$ is a negative discriminant, $r \in \BZ/2N\BZ$ satisfies $r^2 \equiv D \bmod 4N$, and $P_{D,r}+P_{D,-r}$ is a Heegner divisor on $Y_0(N)$ as described in \cite[Section IV]{GKZ} (and recalled in Section \ref{sec:Heegner} below).  In this case, we just need to take $\mu_0 = \mu_r$ following Equation \eqref{mur}. We will use this construction in Section \ref{sec:mainproof}.
\end{rmk}

For any special divisor $Z_{Y_\Gamma}(m_0,\mu_0)$, we denote its Zariski closure in $X_\Gamma$ by $Z_{X_\Gamma}(m_0,\mu_0)$. Such a divisor is supported on CM points, and $Z_{X_\Gamma}(m_0,\mu_0)$ is simply the image of $Z_{Y_\Gamma}(m_0,\mu_0)$ under the natural embedding $Y_\Gamma \hookrightarrow X_\Gamma$.

Now we extend the definition of $Z_{X_\Gamma}(m_0,\mu_0)$ to the case where $m_0=0$.  For any $0\ne x \in W$ with $Q(x)=0$,  there does not exist any negative $2$-plane $z \in \BD_W$ such that $x \perp z$. So we can naturally define $Z_{X_\Gamma}(0, \mu_0)=Z_{Y_\Gamma}(0, \mu_0) =0$ for $\mu_0 \ne 0$.  On the other hand,  every point in $\mathbb D_W$ is perpendicular to $0$, so formally $Z_{Y_\Gamma}(0, 0) =Y_\Gamma$, and $Z_{X_\Gamma}(0, 0)=X_\Gamma$ which has of course the wrong dimension. By the adjunction formula, it is natural to define $Z_{X_\Gamma}(0, 0) = [\omega_W^{-1}]=-K_{X_\Gamma} \in \hbox{CH}^1(X_1)$ to be the negative of the canonical divisor (associated to the canonical line bundle on $X_1$).

Next we want to construct some special divisors on $Y_\Gamma \times Y_\Gamma$ whose pullbacks under the diagonal embedding $Y_\Gamma \hookrightarrow Y_\Gamma \times Y_\Gamma$ yield the special divisors $Z_{Y_\Gamma}(m_0,\mu_0)$. We start by constructing a lattice in $V$ which is closely related to the lattice $L_W$. 

Recall that the space $V$ decomposes as $V = W \oplus W^{\perp}$, where 
\[W^\perp = \biggl\{ \begin{pmatrix}
    a & 0 \\ 0 & a
\end{pmatrix} \mid a \in \BQ \biggr\}. \]
Define $P\subset W^\perp$ by
$$
P= \{ a I_2 \mid  a \in \mathbb Z \} \cong  \mathbb Z, 
$$
with quadratic form $Q(a) =N a^2$.  Its dual lattice with respect to $(\cdot,\cdot)$ is $P' = \{(a/2N)I_2 \mid a \in \BZ \}$ and $P'/P \simeq \BZ/2N\BZ$. 

Now consider the lattice $L= L_W  \oplus P \subset V$. Explicitly, 
\[  L:= \biggl\{ \begin{pmatrix}
    a & -b/N \\ c & d
\end{pmatrix} \mid a,b,c,d \in \BZ , a \equiv d  \bmod 2 \biggr\},  \]
and its dual lattice with respect to $(x,y) = N \cdot \tr(x \cdot {\rm adj}(y))$ is
\[  L':= \biggl\{ \begin{pmatrix}
    a/2N & -b/N \\ c & d/2N
\end{pmatrix} \mid a,b,c,d \in \BZ , a \equiv d  \bmod 2 \biggr\}.  \]
Elements in $L'/L$ can be represented by $\begin{pmatrix}
    r_1+r_2 & 0 \\ 0 & r_1-r_2
\end{pmatrix}$ where $r_1,r_2 \in \{0,\cdots, \frac{2N-1}{2N}\} \subset \mathbb{Z}$. 
Note that $L'/L = L_W'/L_W \oplus P'/P$ as $\begin{pmatrix}
    r_1+r_2 & 0 \\ 0 & r_1-r_2
\end{pmatrix} = \begin{pmatrix}
    r_1 & 0 \\ 0 & r_1
\end{pmatrix} + \begin{pmatrix}
    r_2 & 0 \\ 0 & -r_2
\end{pmatrix}$ and this decomposition is unique.

For any $\mu \in L'/L$, we obtain a Schwartz function on $V(\BA_\rf)$ given by 
\[\phi_\mu = \Char(\mu+\hat{L}), \quad\text{where } \hat{L}=L \otimes \hat{\BZ}.\]

\begin{rmk}
We would like to construct special divisors of the form $Z(m,\mu)$ on $Y_0(N) \times Y_0(N)$ using the lattice $L$ and a Schwartz function $\phi_\mu$. But unfortunately, the group $K_0(N) \times K_0(N)$ does not act on $L'/L$ trivially (moreover, $K_0(N) \times K_0(N)$ does not fix $L$), and so the special divisors $Z(m, \mu)$ will not be well-defined in $Y_0(N) \times Y_0(N)$.
This is the reason why we pass to the cover $Y_N \to Y_0(N)$ as described below, which will allow us to obtain special divisors $Z_{Y_N}(m_0,\mu_0)$ via pullback by the map $Y_N \hookrightarrow Y_N \times Y_N$. 
\end{rmk}


Define groups 
$K=K_1(2N) \cap K(2)$ and $K_V = (K \times  K)\cap \GSpin(V)(\mathbb A_f)$.
Then, explicitly we can write out
\[K = \biggl\{ \begin{pmatrix}
    a & 2b \\ 2Nc & 1+2Nd
\end{pmatrix} \in \GL_2(\hat{\BZ}) \mid a,b,c,d \in \hat{\BZ} \biggr\} .  \]
By direct computation, one can check that \emph{any element $ (b_1,b_2) \in K_V$ acts on $x\in L'/L$ trivially under the action $b_1xb_2^{-1}$.} 

Define the group $\Gamma_N = K\cap \hbox{GL}_2^+(\mathbb Q)=\Gamma_1(2N) \cap \Gamma(2)$
 and let $Y_N =\Gamma_N \backslash \mathbb H $  be the associated modular curve. Let $\Gamma_V = K_V \cap \GSpin(V_\BQ)^+$, where $+$ means that the spinor norm is positive. Then $\Gamma_V=\Gamma_N \times \Gamma_N$, and the Shimura variety associated to $K_V$ is $Y_N \times Y_N = \Gamma_V\backslash\BH^2$. 

 For any $m \in \BQ_{> 0}$, define set $\Omega_m = \{x\in V \mid Q(x)=m\}$. Then by \cite[Lemma 4.1]{BY}, we can define a special divisor of $Y_N \times Y_N$ by 
\[Z(m,\mu) = \sum_{x\in \Gamma_V \backslash \Omega_m(\BQ)} \phi_\mu(x)\textup{pr}(\BD_x,1)\]
where $\textup{pr}$ is the natural projection from $\BD_V \times \GSpin(V)(\BA_\rf)$ to $\Gamma_V\backslash\BH^2$ from Equation \eqref{eq:complexuni}, $\BD_x=\{z\in \BD_V \mid z \perp x\}$, and $\phi_\mu=\textup{char} (\mu + L\otimes\hat{\BZ})$ for any $\mu \in L'/L$. For any $Z(m,\mu)$ in $Y_N\times Y_N$, let $Z^*(m, \mu)$ be the Zariski closure of $Z(m,\mu)$ in $X_N \times X_N$.

For our purposes, we want to extend the definition of $Z^*(m,\mu)$ to the case where $m=0$.  For any $x \in V$, if $x \ne 0$ and $Q(x)=0$, then there does not exist any negative $2$-plane $z \in \BD_V$ such that $x \perp z$. So, for any $\mu \ne 0$, we can simply extend our definition and make $Z^*(0,\mu)=0$. In the case where $\mu=0$, we define $Z^*(0,0) = [\omega_V^{-1}]$.

In the following lemma, we show that the difference $Z^*(m, \mu) - Z(m, \mu)$ is supported on points $(P_1,P_2) \in X_N\times X_N$ where both $P_1$ and $P_2$ are cusps.

\begin{lem}\label{lem:boundary} Let the notation be as above. Then 
$$
Z^*(m, \mu) - Z(m, \mu) = \sum_{ \begin{matrix} \gamma  \in  \mu +L , Q(\gamma) =m
\\
 \mod  \Gamma_V 
\end{matrix} }
   \Gamma_{V, \gamma} \backslash  \{ (\gamma P,  P):\, P \in  \mathbb P^1(\mathbb Q)\}.
$$
Here $\Gamma_{V, \gamma}$  is the stabilizer of $\gamma$ in $\Gamma_V$ and $\gamma \in \mu+L \subset M_2(\BQ)$ acts on $\BP^1(\BQ)$ by fractional linear transformations.
\end{lem}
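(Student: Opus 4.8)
The plan is to compute the difference between the Zariski closure $Z^*(m,\mu)$ and the open special divisor $Z(m,\mu)$ by passing to the uniformization $\overline{\BH}^2 \to X_N \times X_N$ and identifying which new points appear in the closure. First I would unwind the definition: a point of $Z(m,\mu)$ corresponds, upstream in $\BD_V \cong \BH^2 \cup (\BH^-)^2$, to a negative line $z$ with $z \perp x$ for some $x \in (\mu + \hat L) \cap \Omega_m(\BQ)$, and since $K_V$ acts trivially on $L'/L$ the vectors $x$ with $\phi_\mu(x) \neq 0$ are exactly those in $\gamma + L$ with $\gamma \in \mu + L$ a fixed coset representative and $Q(\gamma) = m$. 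So $Z(m,\mu)$ is the $\Gamma_V$-orbit union of the sets $\BD_\gamma \cap (\BH^2 \cup (\BH^-)^2)$ as $\gamma$ ranges over $(\mu+L) \cap \Omega_m$ modulo $\Gamma_V$, with multiplicities recorded by the stabilizers. Taking closures in $X_N \times X_N = \Gamma_V \backslash \overline{\BH}^2$ amounts to taking, for each such $\gamma$, the closure of $\{z \in \BH^2 : z \perp \gamma\}$ inside $\overline{\BH}^2$ and subtracting off the interior part.

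The key geometric computation is then: under the identification $\eta: \BH^2 \cup (\BH^-)^2 \to \BD_V$, $(z_1,z_2) \mapsto \left(\begin{smallmatrix} z_1 & -z_1 z_2 \\ 1 & -z_2 \end{smallmatrix}\right)$, the orthogonality condition $\eta(z_1,z_2) \perp \gamma$ for $\gamma \in M_2(\BQ)$ with $Q(\gamma) = N\det\gamma = m > 0$ cuts out precisely the graph $\{(z_1, z_2) : z_1 = \gamma z_2\}$ of the fractional linear transformation defined by $\gamma$ (this uses that $\det\gamma = m/N \neq 0$, so $\gamma \in \GL_2(\BQ)$ acts on $\BP^1$). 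I would verify this by a direct bilinear-form calculation: $(\eta(z_1,z_2), \gamma) = 0$ is linear in the entries of $\gamma$ and, written out, is equivalent to $z_1 = \gamma \cdot z_2$. The closure of this graph in $\overline{\BH}^2$ is $\{(\gamma z, z) : z \in \overline{\BH}\} = \{(\gamma z, z) : z \in \BH\} \cup \{(\gamma P, P) : P \in \BP^1(\BQ)\}$, since $\gamma$ being invertible preserves $\BP^1(\BQ)$ and extends continuously to the boundary. Subtracting the open part $\{(\gamma z, z) : z \in \BH\}$, which maps onto $Z(m,\mu)$, leaves exactly the cuspidal graph $\{(\gamma P, P) : P \in \BP^1(\BQ)\}$.

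Finally I would descend to $X_N \times X_N$: summing over $\gamma \in (\mu + L)$ with $Q(\gamma) = m$ modulo the $\Gamma_V$-action, and accounting for the stabilizer $\Gamma_{V,\gamma}$ (which acts on the graph and hence on its boundary $\mathbb P^1(\mathbb Q)$), gives
$$Z^*(m,\mu) - Z(m,\mu) = \sum_{\substack{\gamma \in \mu + L,\ Q(\gamma) = m \\ \bmod \Gamma_V}} \Gamma_{V,\gamma} \backslash \{(\gamma P, P) : P \in \mathbb P^1(\mathbb Q)\},$$
as claimed; in particular every point appearing is of the form $(\gamma P, P)$ with $P \in \BP^1(\BQ)$, hence a pair of cusps. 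I expect the main obstacle to be the bookkeeping rather than any single hard idea: one must be careful that the $\Gamma_V$-orbit decomposition of $(\mu+L) \cap \Omega_m$ matches the $\Gamma_V$-orbit decomposition of the divisor components, that multiplicities (stabilizer sizes) are tracked consistently between the open and closed versions so they cancel correctly in the difference, and that the boundary of a graph closure is taken in $\overline{\BH}^2$ rather than in some partial compactification — the Baily–Borel versus toroidal distinction is trivial here since $X_N$ is a curve, but it is worth a remark. The case $m = 0$ is excluded from the statement (it is handled separately by the $[\omega_V^{-1}]$ convention), so no degeneration issue arises.
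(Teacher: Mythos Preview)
Your proposal is correct and follows essentially the same approach as the paper: decompose $Z(m,\mu)$ as a sum of components $Y_\gamma$ indexed by $\gamma\in(\mu+L)\cap\Omega_m$ modulo $\Gamma_V$, identify each $Y_\gamma$ with the graph of the fractional linear transformation $z\mapsto\gamma z$ via the orthogonality computation $\eta(z_1,z_2)\perp\gamma\iff z_1=\gamma z_2$, and read off the boundary as $\Gamma_{V,\gamma}\backslash\{(\gamma P,P):P\in\BP^1(\BQ)\}$. The paper's proof is just a terser version of exactly this argument.
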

\begin{proof} 
Recall the map $\eta: \BH^2 \to \BD_V$ given by $\eta(z_1,z_2) = \begin{pmatrix}
    z_1 & -z_1z_2 \\ 1 & -z_2
\end{pmatrix}$.

By direct calculation, we get 
$$
Z(m, \mu) = \sum_{ \begin{matrix} \gamma  \in  \mu +L , Q(\gamma) =m
\\
 \mod  \Gamma_V 
\end{matrix} }  Y_\gamma
$$
where    
$$
Y_\gamma =\Gamma_{V, \gamma} \backslash \{ (z_1, z_2) \in \mathbb H^2:\,  \gamma \perp \eta(z_1, z_2) \}.
$$
Notice that  $ \gamma \perp \eta(z_1, z_2)$ if and only if $z_1 = \gamma z_2$, where $\gamma$ acts on $\BH$ by fractional linear transformation.  So 
$$
Y_\gamma = \Gamma_{V, \gamma} \backslash  \{ (\gamma z, z):\,  z \in \mathbb  H\}
$$
is a modular curve. Its closure in $X_1 \times X_1$
is thus 
$$
Y_\gamma \cup \Gamma_{V, \gamma} \backslash  \{ (\gamma P,  P):\, P \in  \mathbb P^1(\mathbb Q)\}.
$$
This proves the lemma.
\end{proof}

\subsection{Pullback of special divisors by the diagonal embedding}
In Sections \ref{subsec:modularcurve} and \ref{subsec:specialdivisor}, we constructed modular curves $X_0(N),X_N$ and their products $X_0(N)\times X_0(N)$, $X_N\times X_N$.  They form the following commutative diagram:
\begin{center}
    \begin{tikzcd}
       X_N 
\arrow[d,"\pi_W"] \arrow[r,"\iota_1"] &   X_N \times X_N \arrow[d,"\pi_V"] \\
 X_0(N) \arrow[r,"\iota"] & X_0(N) \times X_0(N)
    \end{tikzcd}
\end{center}
where $\iota_1$ and $\iota$ are diagonal embeddings, and the maps $\pi_W$ and $\pi_V$ are natural projections.  The same commutative diagram holds for the open Shimura varieties with $X$ replaced by $Y$. Note that $X_N$ is connected by the definition of the group $\Gamma_N$.

In Section \ref{subsec:specialdivisor}, we have constructed special divisors $Z_{X_N}(m_0,\mu_0) \in \CH^1(X_N)$ and $Z^*(m,\mu) \in \CH^1(X_N\times X_N)$. For a special divisor $Z^*(m,\mu)\in \CH^1(X_N\times X_N)$, we will compute its image under the pullback map $\iota^*_1$ and then show any divisor of the form $Z_{X_N}(m_0,\mu_0) \in \CH^1(X_N)$ is the image of this pullback map $\iota^*_1$ for some linear combination of $Z^*(m,\mu)$'s.

As was mentioned in \cite[Corollary 3.3]{YY}, the diagonal $X_N \subset X_\Gamma\times X_\Gamma$ can be viewed as a special divisor of $X_\Gamma\times X_\Gamma$. Thus, to discuss the pullback by $\iota^*$, we start with this diagonal.

The adjunction formula asserts 
\begin{equation} \label{eq: Adj}
    \iota_1^* X_N = [\omega_W ]-  [\iota_1^* \omega_V]  = [ \omega_W^{-1}]  =Z_{X_N}(0, 0)\in  \hbox{CH}^1(X_N).
\end{equation}
Here we use the fact that $\iota_1^*(\omega_V)$ is the line bundle of modular forms of weight $4$ and is thus isomorphic to $\omega_W^2$.

Now we can compute an explicit formula for the pullback $\iota_1^*: \CH^1(X_N\times X_N) \to \CH^1(X_N)$ in the same fashion as \cite[Equation (8.6)]{BY} .

\begin{lem}\label{lem:pullback}
    With notations as defined above, fix $m>0$, and $\mu \in L'/L$.
    Recall $L'/L=L_W'/L_W \oplus P'/P$, and there is a unique decomposition $\mu=\mu_0+\mu^+$ where $\mu_0 \in L_W'/L_W$ and $\mu^+\in P'/P$.
    We have 
\begin{equation}\label{eq:pullback}
    \iota_1^* Z^*(m, \mu) 
    = \sum_{ \begin{matrix}
        m =m_0 + m^+,\\ m_0,m^+ \ge 0, \\ m_0 \equiv Q(\mu_0) \pmod 1,\\
        m^+ \equiv Q( \mu^+) \pmod 1\\  
    \end{matrix}} Z_{X_N}(m_0, \mu_0) a_P(m^+, \mu^+)  + c (\infty) \in \CH^1_{\mathbb Q}(X_N)
\end{equation}
for some integer $c \in \mathbb Z$
where 
$$
a_P(m^+,\mu^+) = \# \{ x^+ \in \mu^+ + P |\,  Q(x^+)=m^+ \}.$$  
Here  $(\infty)$ is  the cusp of $X_N$ at $\infty$.
\end{lem}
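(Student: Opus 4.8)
The plan is to compute $\iota_1^* Z^*(m,\mu)$ directly from the geometric description of $Z^*(m,\mu)$ given in Lemma \ref{lem:boundary}, reducing everything to a count of vectors in the orthogonal complement $W^\perp$. First I would work on the open part: on $Y_N\times Y_N$ the divisor $Z(m,\mu)$ is a sum over $\Gamma_V$-classes of $x\in\mu+L$ with $Q(x)=m$ of the curves $Y_x$, and each such $x$ decomposes uniquely as $x = x_W + x^+$ with $x_W\in W\otimes\hat{\mathbb Q}$ (the trace-zero part) and $x^+\in W^\perp$ (the scalar part), so that $Q(x) = Q(x_W)+Q(x^+)$. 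The diagonal $Y_N\hookrightarrow Y_N\times Y_N$ is cut out by $W\subset V$, so $\iota_1^* Z(m,\mu)$ should be expressible as the pushforward to the diagonal of the intersection, which — on the level of the Schwartz functions — amounts to restricting $\phi_\mu = \phi_{\mu_0}\otimes\phi_{\mu^+}$ to $W(\mathbb A_f)$ and summing over the $W^\perp$-variable. Concretely, for each solution $x_W\in\mu_0+L_W$ with $Q(x_W)=m_0$ contributing the special point $\mathrm{pr}(\mathbb D_{x_W,W},1)$ on $Y_N$, the number of $x^+\in\mu^++P$ with $Q(x^+)=m^+ = m-m_0$ is exactly the multiplicity $a_P(m^+,\mu^+)$; here one uses the compatibility $L = L_W\oplus P$, $L'/L = L_W'/L_W\oplus P'/P$, and that $K_V$ acts trivially on $L'/L$ (established just before Lemma \ref{lem:boundary}) so the stabilizers and class counts behave additively. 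This gives formula \eqref{eq:pullback} on $Y_N$, i.e. modulo contributions supported at the cusps.

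The second step is to control the boundary. On $X_N\times X_N$ the closure $Z^*(m,\mu)$ differs from $Z(m,\mu)$ by the cuspidal correction computed in Lemma \ref{lem:boundary}, namely a sum of curves $\Gamma_{V,\gamma}\backslash\{(\gamma P, P): P\in\mathbb P^1(\mathbb Q)\}$. When we pull back along the diagonal $X_N\hookrightarrow X_N\times X_N$, the resulting contribution is a $0$-cycle supported entirely on the cusps of $X_N$ (the fixed points $P = \gamma P$, together with the closure points of the open special points). Since $X_N$ has several cusps but the group $\Gamma_N = \Gamma_1(2N)\cap\Gamma(2)$ acts so that all cusps are $\mathbb Q$-rational and, crucially, linearly equivalent up to the relations recorded elsewhere in the paper (and in any case a divisor supported on cusps of $X_N$ is, by Manin--Drinfeld, torsion — hence zero in $\CH^1_{\mathbb Q}$ after subtracting the appropriate multiple of $(\infty)$), every cuspidal divisor is rationally equivalent to an integer multiple of the single cusp $(\infty)$. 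This accounts for the term $c(\infty)$ with $c\in\mathbb Z$. One also has to check the degenerate summands $m_0 = 0$ or $m^+ = 0$ are consistent with the conventions set up after Lemma \ref{lem:boundary} (where $Z_{X_N}(0,0) = [\omega_W^{-1}]$ and $Z^*(0,0)=[\omega_V^{-1}]$), which is exactly the content of the adjunction computation \eqref{eq: Adj}; so the $m_0=m^+=0$ piece, if it occurs, reproduces $Z_{X_N}(0,0)$ with the right multiplicity.

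I expect the main obstacle to be \emph{bookkeeping rather than conceptual}: matching the adelic description of the special divisors (Schwartz functions on $V(\mathbb A_f)$, the group $\Gamma_V = \Gamma_N\times\Gamma_N$, the projection $\mathrm{pr}$) with the classical orbit description, and verifying that the decomposition $x = x_W+x^+$ is compatible with all the lattice and coset identifications simultaneously — in particular that no cross terms survive in $Q(x) = Q(x_W)+Q(x^+)$ because $W\perp W^\perp$, and that the $\Gamma_V$-stabilizer of a diagonal-incident $x$ is exactly the $\Gamma_N$-stabilizer of $x_W$ (so the multiplicities are genuinely multiplicative). A secondary subtlety is that the sum over decompositions $m = m_0 + m^+$ is finite and the constraints $m_0\equiv Q(\mu_0)\pmod 1$, $m^+\equiv Q(\mu^+)\pmod 1$ are forced by integrality of $Q$ on $L$; one must be careful that these congruences are necessary \emph{and} sufficient for a nonzero count $a_P(m^+,\mu^+)$. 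Once these identifications are in place, the formula \eqref{eq:pullback} follows by collecting terms, with the only indeterminacy being the integer $c$, which we do not need to pin down.
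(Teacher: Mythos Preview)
Your approach is the same as the paper's: decompose vectors $x\in\mu+L$ as $x_W+x^+$ according to $L=L_W\oplus P$, compute the open-part pullback on $Y_N$ as a sum over $(m_0,m^+)$ with multiplicities $a_P(m^+,\mu^+)$, and absorb all cuspidal boundary contributions into $c(\infty)$ via Manin--Drinfeld.

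There is one genuine point of confusion to correct. You write that the ``$m_0=m^+=0$ piece, if it occurs'' is handled by adjunction; but since $m>0$ is fixed, this piece never occurs, so as written your check is vacuous. The case that actually requires adjunction is $m_0=0$, $m^+=m$, with $\mu_0=0$: precisely when some scalar matrix $x^+\in\mu^++P$ has $Q(x^+)=m$, the diagonal $\iota_1(X_N)$ is itself an irreducible component of $Z^*(m,\mu)$ with multiplicity $a_P(m,\mu^+)$, so the intersection is \emph{improper} and you cannot simply ``restrict the Schwartz function to $W(\mathbb A_f)$ and sum over the $W^\perp$-variable'' as in your first step. The paper organizes this by splitting into two cases: when $a_P(m,\mu^+)=0$ (or $\mu_0\neq 0$) the intersection is proper and the direct calculation goes through; when $a_P(m,\mu^+)\neq 0$ and $\mu_0=0$, the diagonal component contributes $a_P(m,\mu^+)\cdot\iota_1^*X_N = a_P(m,\mu^+)\cdot Z_{X_N}(0,0)$ via \eqref{eq: Adj}, while the remaining components still intersect properly and give the $m_0>0$ terms. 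Your reference to \eqref{eq: Adj} is exactly the right tool; you just need to apply it to the correct summand.
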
 
\begin{proof} This identity holds in general on the open Shimura varieties at least for $a_P(m, \mu) =0$.  When $a_P(m, \mu) = 0 $, $\iota_1(X_N)$ intersects $Z(m,\mu)$ properly, and direct calculation gives 
$$
 \iota_1^* Z(m, \mu) 
    = \sum_{ \begin{matrix}
        m =m_0 + m^+,\\ m_0,m^+ \ge 0, \\ m_0 \equiv Q(\mu_0) \pmod 1,\\
        m^+ \equiv Q( \mu^+) \pmod 1\\  
    \end{matrix}} Z_{Y_N}(m_0, \mu_0) a_P(m^+, \mu^+) 
$$
as divisors.  As $Z^*(m, \mu)$ might intersect with boundary of $X_N \times X_N$, its pullback might include cusps following Lemma \ref{lem:boundary},  and thus Equation \eqref{eq:pullback} holds with possibly some extra $c (\infty)$. Here we use the well-known result of Drinfeld \cite{Drinfeld} and Manin \cite{Manin} that the difference of two cusps is a torsion divisor.

When  $a_P(m, \mu)\ne 0 $ (which can only happen in the case $\mu \in P'/P$), $\iota_1 (X_N)$ appears in $Z^*(m, \mu) $  with multiplicity $a_P(m, \mu)$. The same calculation and Equation \eqref{eq: Adj} gives 
\begin{align*}
  \iota_1^* Z^*(m, \mu) 
   & = \sum_{ \begin{matrix}
        m =m_0 + m^+,\\ m_0 > 0 ,m^+ \ge 0, \\ m_0 \equiv Q(\mu_0) \pmod 1,\\
        m^+ \equiv Q( \mu^+) \pmod 1\\  
    \end{matrix}} Z_{X_N}(m_0, \mu_0) a_P(m^+, \mu^+)  + c (\infty)
     + a_P(m, \mu) \iota_1^* X_N
     \\
    &= \sum_{ \begin{matrix}
        m =m_0 + m^+,\\ m_0  ,m^+ \ge 0, \\ m_0 \equiv Q(\mu_0) \pmod 1,\\
        m^+ \equiv Q( \mu^+) \pmod 1\\  
    \end{matrix}} Z_{X_N}(m_0, \mu_0) a_P(m^+, \mu^+)  + c (\infty)
\end{align*}
as claimed.
\end{proof}

\begin{prop}\label{prop:mainprop}
    Any special divisor of the form $ Z_{X_N}(m_0,\mu_0) \in\CH_{\mathbb Q}^1(X_N)$ with $\mu_0 \in L_W'/L_W$ is the pull-back of a rational linear combination of  special divisors on $X_N \times X_N$ by the map $\iota_1$.  Here we view $Z^*(0, 0)=[\omega_V^{-1}] $ as a special divisor on $X_N \times X_N$.
\end{prop}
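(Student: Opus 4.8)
The plan is to invert the pullback formula of Lemma~\ref{lem:pullback} by a triangularity argument, keeping $\mu_0$ fixed throughout. Fix $\mu_0 \in L_W'/L_W$ and regard it as the element $\mu_0 + 0 \in L'/L = L_W'/L_W \oplus P'/P$, so that in~\eqref{eq:pullback} we have $\mu^+ = 0$. Then for every $m > 0$ with $m \equiv Q(\mu_0) \pmod 1$ the formula reads
\begin{equation*}
\iota_1^* Z^*(m,\mu_0) \;=\; \sum_{\substack{m = m_0 + m^+ \\ m_0,\,m^+ \ge 0 \\ m_0 \equiv Q(\mu_0) \pmod 1}} a_P(m^+,0)\, Z_{X_N}(m_0,\mu_0) \;+\; c_m\,(\infty),
\end{equation*}
where $a_P(m^+,0) = \#\{a \in \BZ : Na^2 = m^+\}$; in particular $a_P(0,0)=1$, and $a_P(m^+,0)=0$ unless $m^+ = Na^2$ for some $a$. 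The set $S := \{\,t\ge 0 : t \equiv Q(\mu_0) \pmod 1\,\}$ has the form $\{\beta_0 < \beta_1 < \cdots\}$ with $\beta_0 \in [0,1)$ and $\beta_k = \beta_0 + k$, and for $m = \beta_k$ the sum above runs over $m_0 = \beta_j$, $m^+ = k-j$, $0 \le j \le k$, the coefficient of the ``top'' divisor $Z_{X_N}(\beta_k,\mu_0)$ being $a_P(0,0) = 1$. Thus $\{\iota_1^* Z^*(\beta_k,\mu_0)\}_{k\ge 0}$ is a \emph{unitriangular} system in the unknowns $Z_{X_N}(\beta_k,\mu_0)$, modulo the cusp $(\infty)$.

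Next I would solve this system by induction on $k$, showing that $Z_{X_N}(\beta_k,\mu_0) = \iota_1^*\xi_k + q_k\,(\infty)$ in $\CH^1_{\BQ}(X_N)$, for some rational linear combination $\xi_k$ of the divisors $Z^*(\beta_j,\mu_0)$ ($j\le k$) and some $q_k \in \BQ$. For the base case $k=0$: if $\beta_0 > 0$ the only term in the sum is $m^+ = 0$, so $\iota_1^* Z^*(\beta_0,\mu_0) = Z_{X_N}(\beta_0,\mu_0) + c\,(\infty)$ directly; if $\beta_0 = 0$ and $\mu_0 \ne 0$ then $Z^*(0,\mu_0) = 0 = Z_{X_N}(0,\mu_0)$ by convention; and if $\beta_0 = 0 = \mu_0$ then, by~\eqref{eq: Adj} and $\iota_1^*\omega_V \simeq \omega_W^2$, one has $\iota_1^* Z^*(0,0) = [\omega_W^{-2}] = 2\,[\omega_W^{-1}] = 2\,Z_{X_N}(0,0)$, so $Z_{X_N}(0,0) = \tfrac12\,\iota_1^* Z^*(0,0)$. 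For the inductive step one solves the $k$-th relation for $Z_{X_N}(\beta_k,\mu_0)$ and substitutes the inductive expressions for $Z_{X_N}(\beta_j,\mu_0)$ with $j<k$; the outcome has the asserted shape.

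It remains to recognize the residual cusp $q_k\,(\infty)$ as a pullback, and this is where the choice of $X_N$ is used. By Lemma~\ref{l2} the canonical class $K_{X_N}$ is represented by a divisor supported on cusps, and since the difference of any two cusps on a modular curve is torsion (Drinfeld~\cite{Drinfeld}, Manin~\cite{Manin}), we get $K_{X_N} = (2g-2)(\infty)$ in $\CH^1_{\BQ}(X_N)$, where $g$ is the genus of $X_N$. On the other hand $\iota_1^* Z^*(0,0) = [\omega_W^{-2}] = -2K_{X_N}$. Hence, provided $g \ne 1$,
\begin{equation*}
(\infty) \;=\; \frac{-1}{2(2g-2)}\,\iota_1^* Z^*(0,0) \qquad \text{in }\ \CH^1_{\BQ}(X_N),
\end{equation*}
so $q_k\,(\infty)$ is itself the $\iota_1$-pullback of a rational multiple of $Z^*(0,0)$; adding it to $\iota_1^*\xi_k$ yields the proposition. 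The finitely many $N$ for which $g(X_N) \le 1$ — where the last display degenerates, but which play no role in the applications of Section~\ref{sec:mainproof} — would be dispatched by hand.

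I expect the genuine content to be concentrated in this last step: the triangular inversion is purely formal, but the boundary correction $c_m\,(\infty)$ produced by Lemma~\ref{lem:pullback} cannot in general be made to vanish, and one really needs the cuspidality of $K_{X_N}$ (Lemma~\ref{l2}) — precisely the property that fails for $X_0(N)$, cf.~Example~\ref{eg:X0(37)} — in order to exhibit $(\infty)$ as a pullback of a special divisor. A secondary technical point is bookkeeping the $m_0 = 0$ contributions ($Z_{X_N}(0,0) = [\omega_W^{-1}]$ when $\mu_0 = 0$, versus $Z^*(0,\mu_0)=0=Z_{X_N}(0,\mu_0)$ when $\mu_0 \ne 0$), which must be handled correctly to seed the induction.
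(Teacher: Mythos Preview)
Your proposal is correct and follows essentially the same approach as the paper's proof: fix $\mu_0$, take $\mu^+=0$, observe that Lemma~\ref{lem:pullback} gives a unitriangular system (since $a_P(0,0)=1$), invert it by induction on $m_0$, and absorb the cusp term using $(\infty)=a[\omega_W]$ for some $a\in\BQ$ via Manin--Drinfeld and the torsion-freeness of $\Gamma_N$ (proof of Lemma~\ref{l2}). Your treatment of the base case $m_0=0$ (computing $\iota_1^*Z^*(0,0)=2Z_{X_N}(0,0)$) and your flagging of the degenerate case $g(X_N)\le 1$ are a bit more explicit than the paper, but the argument is the same.
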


\begin{proof} The proof is by induction on $m_0$. The case $m_0=0$ is clear following Equation \eqref{eq: Adj}.   Note that $(\infty) = a [\omega_W] $ for some rational number $a$ for any modular curve $X_{\Gamma}$ with $\Gamma$ torsion-free (see the proof of Lemma \ref{l2}).  In particular, this holds for $X_N$.

For $m_0 >0$, by Lemme \ref{lem:pullback}, we have 
$$
Z_{X_N}(m_0, \mu_0) 
 = \iota_1^* Z^*(m_0, \mu_0)  - \sum_{\begin{matrix}
        m_0 =m_0' + m^+,\\m_0> m_0' \ge 0 \end{matrix} } Z_{X_N}(m_0', \mu_0) a_P(m^+, 0)  - c (\infty).
$$
Now the proposition is clear by induction.
\end{proof} 

Note that Theorem \ref{tthm:modularpullback} stated in the introduction directly follows from Proposition \ref{prop:mainprop}, Remark \ref{rmk:Heegner}, and Equation \eqref{eq:Zm0mu0}.

\section{Non-triviality of the Ceresa cycle and the Gross--Kudla--Schoen cycle associated to modular curves}\label{sec:mainproof}

In this section, we use the Chow--Heegner divisor construction described in Section \ref{sec:ChowHeegner} and the explicit cover and pullback formula in Proposition \ref{prop:mainprop} to deduce our main results on the nontriviality of the Ceresa and Gross--Kudla--Schoen cycles associated to the modular curves $X_N$ with respect to any base point.

\subsection{Heegner divisor and Chow--Heegner divisor}\label{sec:Heegner}
In this section, we use the Chow--Heegner divisor discussed in Section \ref{sec:ChowHeegner} and the work of Gross--Kohnen--Zagier \cite{GKZ} to deduce the non-triviality of the Gross--Kudla--Schoen cycle where the base point is a cusp. 

We start by recalling the construction of Heegner divisors on $X_0(N)$ following \cite[Section IV]{GKZ}.

Let $K=\mathbb{Q}(\sqrt{D})$ be an imaginary quadratic field, where $D<0$ is the discriminant of an order $\mathcal{O}_D$. Assume $(D,N)=1$ and that there exists a class $r \bmod 2N$ satisfying $r^2 \equiv D \bmod 4N$. 
Then we can express the order $\mathcal{O}_D=\mathbb{Z}+\mathbb{Z} \frac{r+\sqrt{D}}{2}$ and $r$ determines a primitive ideal $\mathfrak{n} = \mathbb{Z}N+\mathbb{Z}\frac{r+\sqrt{D}}{2} \subset \mathcal{O}_D$ of index $N$. 

A point $x \in Y_0(N)$ represents a  degree $N$ cyclic isogeny $\pi:E\to E'$ between elliptic curves.  Consider those $x$ for which $\mathfrak{n}$ annihilates $\ker(\pi)$ and there are embeddings $\mathcal{O}_D \hookrightarrow \End(E)$ and $\mathcal{O}_D \hookrightarrow \End(E')$ which make 
\begin{center}
        \begin{tikzcd}
            E \arrow[r,"\pi"] \arrow[d, "\mathcal{O}_D"] & E'  \arrow[d, "\mathcal{O}_D"]\\
            E \arrow[r,"\pi"]& E'.
        \end{tikzcd}
    \end{center}
commute.  The \textit{Heegner divisor} $P_{D,r}$ associated to the data $D$ and $r$ is defined as the sum of such points $x$ with multiplicities $1/e$, where $e$ is the order of $\Aut(x)/\{\pm 1\}$. The divisor $P_{D,r}$ is defined over $\mathbb{Q}(\sqrt{D})$.

Let $\omega_N$ be the $N$-th Atkin--Lehner involution on $X_0(N)$, and let $X_0^*(N)$ denote the quotient of $X_0(N)$ by $\omega_N$. Under this involution, we have $\omega_N(P_{D,r})=P_{D,-r}$ and we use $P_{D,r}^*$ to denote the image of $P_{D,r}$ under the quotient map $X_0(N) \to X_0^*(N)$.
The degree of $P_{D,r}$ and $P_{D,-r}$ is equal to $H_D$, the Hurwitz class number. 

Let $\infty \in X_0(N)$ be the rational cusp given by the image of $\infty \in \overline{\BH}$, and let $\infty^*$ denote the image of $\infty\in X_0(N)$ under the quotient map $X_0(N) \to X_0^*(N)$. Then $P_{D,r}^*-H_D\infty^*$ is a degree $0$ divisor on $X_0^*(N)$ defined over the rational numbers $\mathbb{Q}$, which gives a point $y_{D,r}^*$ on $J_0^*(N)$, the Jacobian of $X_0^*(N)$. The point $y_{D,r}^*$ is the image of $y_{D,r} = P_{D,r}+P_{D,-r}-2H_D\infty$ under the map $J_0(N) \to J_0^*(N)$.

Let $f \in \mathcal{S}^{\rm new}_2(\Gamma_0(N))^-$ be a weight $2$ normalized new form with root number (sign of functional equation)  $-1$\footnote{This is equivalent to $f\,d\tau\in \Omega^1(X_0(N))$ being the pullback of a form in $\Omega^1(X_0^*(N))$.}. By the main theorem of Gross--Zagier \cite[Theorem (6.3)]{GZ} (also see Gross--Kohnen--Zagier \cite[Page 557]{GKZ}), the canonical height $h_K$ of a projection of $y_{D,r}= P_{D,r}-H_D\infty \in J(N)$ over $K=\mathbb{Q}(\sqrt{D})$ is given by
\[ L'(f,1) L(f_D, 1)= \frac{8 \pi^2 (f,f)}{u^2D^{1/2}}h_K(y_{D,r,f}) \]
where $f_D $ is the quadratic twist of $f$ by the Dirichlet character associated to $K/\mathbb Q$, $(f,f)$ is the Peterson inner product, $2u$ is the number of roots of unity in $\mathcal{O}_D$, and $y_{D,r,f}$ is the projection of $y_{D,r}$ to the $f$-isotypical component of $J_0(N)$ as described in \cite[Page 230]{GZ}. Thus, if $L'(f,1) \ne 0$ and $L(f_D, 1) \ne 0$, then $y_{D,r} \in J_0(N)(K)$ is a non-torsion point. In such a case, $y_{D, r, f}$ descends to a non-torsion point in $J_0^*(N)$. It is well-known that there exists infinitely many  $D$ (can be assumed to be fundamental) with $L(f_D, 1) \ne 0$, see for example \cite{Wald}, \cite{BFH}, and \cite{MM}.

\begin{lem}\label{lem:nontorsionHeegner}
    Suppose there exists $f \in \mathcal{S}^{\rm new}_2(\Gamma_0(N))^-$ satisfying $L'(f,1) \ne 0$.  Then there is a special divisor $Z_{X_0(N)}(m_0,\mu_0)\subset X_0(N)$ (as described in Section \ref{sec:explicitcover}) of degree $=:d$,  the degree $0$ divisor $Z_{X_0(N)}(m_0,\mu_0) - d (\infty) \in J_0(N)$ is non-torsion.
\end{lem}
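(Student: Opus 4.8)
The plan is to connect the special divisor $Z_{X_0(N)}(m_0,\mu_0)$ to the Heegner divisor $P_{D,r}+P_{D,-r}$ via Remark \ref{rmk:Heegner}, and then invoke the Gross--Zagier and Gross--Kohnen--Zagier machinery already recalled above. First I would fix the weight $2$ newform $f\in\mathcal S^{\mathrm{new}}_2(\Gamma_0(N))^-$ with $L'(f,1)\ne 0$. By the non-vanishing results of Waldspurger, Bump--Friedberg--Hoffstein, and Murty--Murty (cited in the excerpt), there exists a fundamental discriminant $D<0$ with $(D,N)=1$, with $r^2\equiv D\pmod{4N}$ solvable, and with $L(f_D,1)\ne 0$. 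Set $m_0 = -D/4N\in\BQ_{>0}$ and $\mu_0=\mu_r$ as in Equation \eqref{mur}; then Remark \ref{rmk:Heegner} gives $Z_{X_0(N)}(m_0,\mu_0)=P_{D,r}+P_{D,-r}$, a divisor of degree $d=2H_D$.

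Next I would show $Z_{X_0(N)}(m_0,\mu_0)-d(\infty) = P_{D,r}+P_{D,-r}-2H_D(\infty) = y_{D,r}$ is non-torsion in $J_0(N)$. This is exactly the content of the Gross--Zagier height formula as stated above: since $L'(f,1)\ne 0$ and $L(f_D,1)\ne 0$, the product $L'(f,1)L(f_D,1)$ is nonzero, hence the canonical height $h_K(y_{D,r,f})$ of the $f$-isotypical projection is nonzero, so $y_{D,r,f}$ — and therefore $y_{D,r}$ itself — is a non-torsion point of $J_0(N)(K)$. Since a rational multiple of a non-torsion point is non-torsion, this finishes the argument.

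The only genuinely delicate point is matching conventions: one must check that the divisor $Z_{X_0(N)}(m_0,\mu_0)$ defined via the Schwartz function $\phi_{\mu_0}=\Char(\mu_0+\hat L_W)$ and the lattice $L_W$ really does reproduce the Heegner divisor $P_{D,r}+P_{D,-r}$ of \cite[Section IV]{GKZ}, with the correct normalization of multiplicities $1/e$ by automorphisms; this is asserted in Remark \ref{rmk:Heegner} following \cite[Section 7.1]{BY}, so I would simply cite it, but it is worth stating explicitly that $-4Nm_0 = D$ is the relevant negative discriminant. A secondary subtlety is that $H_D$ (and hence $d$) is a rational number when there are elliptic points, but since we work with Chow groups tensored with $\BQ$ this causes no difficulty, and the degree-$0$ divisor $Z_{X_0(N)}(m_0,\mu_0)-d(\infty)$ is still well-defined and equal to $y_{D,r}$ up to the normalization bookkeeping. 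Thus the main obstacle is bookkeeping rather than mathematics: everything substantive is imported from Gross--Zagier, Gross--Kohnen--Zagier, and the standard non-vanishing theorems.
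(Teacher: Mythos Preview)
Your proposal is correct and follows essentially the same route as the paper. The only cosmetic difference is that the paper invokes \cite[Theorem C]{GKZ} directly to produce a pair $(D,r)$ with $y_{D,r,f}^*$ of nonzero height, whereas you unpack this by first choosing $D$ via the non-vanishing theorems and then applying the Gross--Zagier formula; both arguments land on the identification $Z_{X_0(N)}(m_0,\mu_0)=P_{D,r}+P_{D,-r}$ from Remark~\ref{rmk:Heegner} and conclude in the same way.

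One small point of bookkeeping to tighten: the Gross--Zagier formula as recalled above is stated for $y_{D,r}=P_{D,r}-H_D\infty$ over $K$, not for the symmetrized $\BQ$-rational divisor $P_{D,r}+P_{D,-r}-2H_D\infty$. You should make explicit the passage from one to the other: since $f$ has root number $-1$, its isotypical component is $\omega_N$-invariant, so $(P_{D,r}-H_D\infty)_f$ and $(P_{D,-r}-H_D\cdot\omega_N(\infty))_f$ coincide, and Manin--Drinfeld handles the cusps. The paper sidesteps this by working with $y_{D,r}^*\in J_0^*(N)$ and noting that $2y_{D,r}^*$ is the image of $P_{D,r}+P_{D,-r}-2H_D\infty$.
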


\begin{proof}
By \cite[Theorem C]{GKZ}, there exists a Heegner divisor $y_{D,r}^* \in J^*_0(N)$ such that the $f$-isotypical component has nonzero height. Since $2y^*_{D,r}$ is the image of $P_{D,r}+P_{D,-r}-2H_D\infty$ under the map $J_0(N) \to J_0^*(N)$, it follows that $P_{D,r}+P_{D,-r}-2H_D\infty \in J_0(N)$ is non-torsion.

By Remark \ref{rmk:Heegner}, by taking $D=-4Nm_0$ to be a negative discriminant of an order $\mathcal{O}_D $ in an imaginary quadratic field $K=\mathbb{Q}(\sqrt{D})$ and $r \in \mathbb{Z}/2N\mathbb{Z}$ to satisfy $r^2 \equiv D \bmod 4N$, we obtain the special divisor $Z_{Y_0(N)}(m_0,\mu_0)$ where $\mu_0=\mu_r \in L_W'/L_W$. As was discussed in \cite[Section 7.1]{BY}, following the definition of $Z_{Y_0(N)}(m_0,\mu_0)$, we have 
$$
Z_{X_0(N)}(m_0, \mu_0) = Z_{Y_0(N)}(m_0,\mu_0)=P_{D,r}+P_{D,-r}. $$
We conclude the statement.
\end{proof}

\subsection{Proof of nontriviality of the Gross--Kudla--Schoen cycle with a cusp as base point.}\label{subsec:GKSinfty}
In this section, we prove the cycle $\Delta_{GKS}(X_N,\infty)$ is nontrivial in $\CH^1(X_N^3)$. In section \ref{S5.4}, we give a proof of the fact that the cusp $\infty \in \CH^1(X_N)$ satisfies $(2g-2)\infty=K_{X_N} \in \CH^1(X_N)$. Thus, combined with Example \ref{eg:diagonal}, we obtain Theorem \ref{thm:main2} regarding the Gross--Kudla--Schoen cycle.

\begin{thm}\label{th-main}
If there exists a normalized weight $2$ new form $f \in \mathcal{S}^{\rm new}_2(\Gamma_0(N))^-$ satisfying $L'(f,1) \ne 0$, then the cycle $\Delta_{GKS}(X_N,\infty)$ is nontrivial in in $\CH^2(X_N^3)$ where $X_N$ is the modular curve associated to the congruence subgroup $\Gamma_1(2N)\cap\Gamma(2) \subset \SL_2(\BZ)$.
\end{thm}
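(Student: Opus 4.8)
The plan is to apply Lemma \ref{lem:Z}: it suffices to produce a correspondence $Z \in \CH^1(X_N \times X_N)$ for which the Chow--Heegner divisor $\Pi_Z(\Delta_{GKS}(X_N,\infty)) \in \Jac(X_N)$ is non-torsion. First I would invoke Lemma \ref{lem:nontorsionHeegner}, which (using the hypothesis $L'(f,1)\ne 0$ for some $f \in \mathcal S^{\mathrm{new}}_2(\Gamma_0(N))^-$ and the Gross--Zagier / Gross--Kohnen--Zagier machinery) furnishes a special divisor $Z_{X_0(N)}(m_0,\mu_0)$ of degree $d$ on $X_0(N)$ such that $Z_{X_0(N)}(m_0,\mu_0) - d(\infty)$ is non-torsion in $J_0(N)$. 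Pulling back along $\pi_W\colon X_N \to X_0(N)$ and using Equation \eqref{eq:Zm0mu0} together with the fact that $\pi_W^*$ is (up to isogeny) injective on Jacobians — since $\pi_W$ is a finite dominant morphism of curves — I get that $Z_{X_N}(m_0,\mu_0) - d'(\infty) \in \Jac(X_N)$ is non-torsion, where $\infty$ is the cusp of $X_N$ above the cusp $\infty$ of $X_0(N)$ and $d' = \deg Z_{X_N}(m_0,\mu_0)$ (here I also use Manin--Drinfeld, so that replacing a general degree-$d'$ cusp-combination by $d'(\infty)$ changes the class only by a torsion element).

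Next I would apply Proposition \ref{prop:mainprop}: the special divisor $Z_{X_N}(m_0,\mu_0)$ (with $\mu_0 \in L_W'/L_W$, which is exactly the shape supplied by Remark \ref{rmk:Heegner}) equals $\iota_1^*\left(\sum_j c_j Z^*(m_j,\mu_j)\right)$ for suitable rational $c_j$ and special divisors $Z^*(m_j,\mu_j) \in \CH^1(X_N\times X_N)$, where $\iota_1$ is the diagonal embedding. Set $Z := \sum_j c_j Z^*(m_j,\mu_j) \in \CH^1(X_N\times X_N)$. By Lemma \ref{lem222}, with base point $o = \infty$,
\[
\Pi_Z(\Delta_{GKS}(X_N,\infty)) = \jmath_{12}^*(Z) - \jmath_1^{\infty,*}(Z) - \jmath_2^{\infty,*}(Z) - (\deg\text{-term})(\infty).
\]
Now $\jmath_{12}^* = \iota_1^*$ is precisely the diagonal pullback, so $\jmath_{12}^*(Z) = Z_{X_N}(m_0,\mu_0)$. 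It remains to identify the "slice" pullbacks $\jmath_i^{\infty,*}(Z)$: restricting a special divisor $Z^*(m,\mu)$ on $X_N\times X_N$ to a horizontal or vertical fiber over the cusp $\infty$ gives a divisor supported on cusps of $X_N$ (one can see this directly from Lemma \ref{lem:boundary} and the defining sum over $\Gamma_V\backslash\Omega_m(\BQ)$, since a point $(\infty, z)$ lies on $Z^*(m,\mu)$ only when the relevant $\gamma$ fixes $\infty$, i.e.\ $\gamma$ is upper-triangular, forcing $z$ to be a cusp as well). Hence $\jmath_1^{\infty,*}(Z)$ and $\jmath_2^{\infty,*}(Z)$, together with the $\deg$-correction term and the Manin--Drinfeld discrepancy in writing $Z_{X_N}(m_0,\mu_0) - d'(\infty)$, all contribute only a torsion element of $\Jac(X_N)$. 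Therefore $\Pi_Z(\Delta_{GKS}(X_N,\infty))$ and $Z_{X_N}(m_0,\mu_0) - d'(\infty)$ agree up to torsion, and the latter is non-torsion by the previous paragraph. By Lemma \ref{lem:Z} this forces $\Delta_{GKS}(X_N,\infty)$ to be nontrivial in $\CH^2(X_N^3)$.

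The main obstacle I anticipate is the bookkeeping around the cusp contributions: one must be careful that \emph{every} auxiliary term arising in Lemma \ref{lem222} — the two slice restrictions $\jmath_i^{\infty,*}(Z)$, the degree-zero correction multiple of $(\infty)$, and the cuspidal error terms $c\,(\infty)$ already present in the pullback formula \eqref{eq:pullback} — is genuinely supported on cusps of $X_N$, so that Manin--Drinfeld kills it in $\Jac(X_N)\otimes\BQ$. Verifying that the fiber of a special divisor $Z^*(m,\mu)$ over the cusp is cuspidal is the one genuinely geometric point; it follows from the structure of $Z^*(m,\mu) - Z(m,\mu)$ in Lemma \ref{lem:boundary} and the observation that $Z(m,\mu)$, being a union of modular curves $Y_\gamma = \Gamma_{V,\gamma}\backslash\{(\gamma z, z)\}$, meets a fiber $\{\infty\}\times Y_N$ only in the cusps. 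A secondary, purely formal point is the (standard) injectivity up to isogeny of $\pi_W^*$ on Jacobians, which lets one transfer the non-torsion statement from $X_0(N)$ to $X_N$.
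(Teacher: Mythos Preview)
Your proposal is correct and follows essentially the same route as the paper's own proof: invoke Lemma~\ref{lem:nontorsionHeegner} to obtain a non-torsion Heegner divisor on $X_0(N)$, transfer it to $X_N$ via $\pi_W$, realize it as $\iota_1^*(Z)$ by Proposition~\ref{prop:mainprop}, and then use Lemma~\ref{lem222} together with Lemma~\ref{lem:boundary} and Manin--Drinfeld to identify $\Pi_Z(\Delta_{GKS}(X_N,\infty))$ with that non-torsion divisor up to torsion. The only cosmetic differences are that the paper transfers non-torsion from $J_0(N)$ to $\Jac(X_N)$ via $\pi_{W,*}\pi_W^*=\deg(\pi_W)\cdot\mathrm{id}$ rather than citing injectivity of $\pi_W^*$, and that your parenthetical justification of the cuspidality of $\jmath_i^{\infty,*}(Z)$ is slightly garbled (a point $(\infty,z)$ on the closure of $Y_\gamma=\{(\gamma w,w)\}$ forces $z=\gamma^{-1}\infty\in\BP^1(\BQ)$, not that $\gamma$ fixes $\infty$)---but the conclusion is right and this is exactly what Lemma~\ref{lem:boundary} provides.
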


\begin{proof}
Recall the construction of $X_N$ and the following commutative diagram from Section \ref{sec:explicitcover}, where the maps $\iota$ and $\iota_1$ are the diagonal embeddings given by the embedding of Hermitian domains $\BD_W \hookrightarrow \BD_V$ discussed in Section \ref{sec:explicitcover}.

  \begin{center}
    \begin{tikzcd}
     Y_N \arrow[r, hook] \arrow[d, "\pi_W"] &  X_N 
\arrow[d,"\pi_W"] \arrow[r, hook, "\iota_1"] &   X_N \times X_N \arrow[d,"\pi_V"] \\
Y_0(N) \arrow[r,hook] & X_0(N) \arrow[r, hook,"\iota"] & X_0(N) \times X_0(N)
    \end{tikzcd}
\end{center}

Following Lemma \ref{lem:nontorsionHeegner}, with our assumption on the existence of $f$, there exists a special divisor $Z_{X_0(N)}(m_0,\mu_0) $ such that the divisor $Z_{X_0(N)}(m_0,\mu_0)  - d\infty \in J_0(N)$ is non-torsion. 

Moreover, by construction of $X_N$ and the special divisors $Z_{X_N}(m_0,\mu_0)$, we have $Z_{X_N}(m_0,\mu_0) = \pi_W^*(Z_{X_0(N)}(m_0,\mu_0))$ and thus $\deg (\pi_W) \cdot Z_{X_0(N)}(m_0,\mu_0) = \pi_{W,*}(Z_{X_N}(m_0,\mu_0))$, which implies the divisor $Z_{X_N}(m_0,\mu_0)  - d_1 \infty \in \Jac(X_N)$ is non-torsion (where $d_1=\deg Z_{X_N}(m_0,\mu_0) $ and $\infty \in X_N$ is the cusp from $\infty$). 

By Proposition \ref{prop:mainprop}, the divisor $Z_{X_N}(m_0,\mu_0)= \iota_1^*(Z)$ for a linear combination of special divisors $Z \in \CH^1(X_N \times X_N)$.

We consider the Gross--Kudla--Schoen cycle $\Delta_{GKS}(X_N,\infty)$ associated to the pointed curve $(X_N,\infty)$.
By Lemma \ref{lem222}, we get the Chow--Heegner divisor
\[\Pi_Z(\Delta_{GKS}(X_N,\infty))= \jmath^*_{12} (Z)- \jmath_1^{\infty,*}(Z)-\jmath_2^{\infty,*}(Z)- \deg(\jmath^*_{12} (Z)- \jmath_1^{\infty,*}(Z)-\jmath_2^{\infty,*}(Z))\infty.\]
The first term satisfies $\jmath^*_{12} (Z)=\iota_1^*(Z)=Z_{X_N}(m_0,\mu_0)$. By Lemma \ref{lem:boundary}, the next two terms $\jmath_1^{\infty,*}(Z)$ and $\jmath_2^{\infty,*}(Z)$ are supported on cusps. By the theorem of Drinfeld \cite{Drinfeld} and Manin \cite{Manin}, the divisor $\Pi_Z(\Delta_{GKS}(X_N,\infty)) = Z_{X_N}(m_0,\mu_0)  - d_1 \infty \in \Jac(X_N) \otimes \mathbb{Q}$ and thus is non-torsion. By Lemma \ref{lem:Z}, we deduce our statement.
\end{proof}

As mentioned in the introduction, the existence of  Hecke eigenform $f$ of weight $2$ and level $N$ with $L'(f, 1) \ne 0$ is weak for our purpose.  Indeed, we have the following theorem.

\begin{thm}  \label{thm-concrete} (a) \, If a positive integer $N$ satisfies one of the following conditions, then  the $\Delta_{GKS}(X_N,\infty)$ is nontrivial in $\CH^2(X_N^3)$:
\begin{enumerate}
    \item $N$ is divisible by a prime $p=37, 43, 53, 61, 67$ or $ p >71$,

    \item   $N$ is divisible by $p^2$ for some prime $ p \ge 11$.
\end{enumerate}

(b) When $N$ is big enough,  then the $\Delta_{GKS}(X_N,\infty)$ cycle is nontrivial in $\CH^2(X_N^3)$. For example 
$$
 N > 2^6 \cdot 3^4 \cdot 5^2 \cdot 7^2 \cdot \prod_{ \begin{matrix} 11 \le p \le 71 \\ p \ne 37, 43, 53, 61, 67 \end{matrix}} p ,
$$
 is enough.   
\end{thm}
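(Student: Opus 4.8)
The plan is to deduce both parts of the theorem from Theorem~\ref{th-main} by combining a covering-transfer principle with a finite verification and a non-vanishing input at an auxiliary list of levels. For a positive integer $M$ write $\Gamma_M=\Gamma_1(2M)\cap\Gamma(2)$ and $X_M=\Gamma_M\backslash\overline{\BH}$, so that the theorem concerns $X_N$. If $M\mid N$ then $\Gamma_N\subseteq\Gamma_M$, and there is a finite morphism $\pi\colon X_N\to X_M$ taking the cusp $\infty$ of $X_N$ to the cusp $\infty$ of $X_M$. On Jacobians $\pi_*\colon\Jac(X_N)\to\Jac(X_M)$ satisfies $\pi_*(\AJ_{X_N}(Q-\infty))=\AJ_{X_M}(\pi Q-\infty)$, so on pushforward of algebraic cycles the Ceresa cycle $X_N^+-X_N^-$ is carried to $\deg(\pi)\,(X_M^+-X_M^-)$; working with $\BQ$-coefficients, nontriviality of the Ceresa cycle of $(X_M,\infty)$ therefore implies nontriviality of that of $(X_N,\infty)$. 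Moreover $\Gamma_M$ is torsion-free, so the cusp $\infty$ satisfies $(2g-2)\infty=K_{X_M}$ by the argument of Lemma~\ref{l2} and Section~\ref{S5.4}; hence, by Example~\ref{eg:diagonal} and \cite[Theorem 1.5.5]{Zhang}, this nontriviality is equivalent to that of $\gks(X_M,\infty)$, and likewise for $X_N$. Thus it suffices to prove that $\gks(X_M,\infty)$ is nontrivial for $M$ ranging over a set of ``generators'' of the divisibility conditions in the statement.

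By Theorem~\ref{th-main} applied at level $M$, the cycle $\gks(X_M,\infty)$ is nontrivial as soon as $\mathcal{S}^{\mathrm{new}}_2(\Gamma_0(M))^-$ contains a normalized newform $f$ with $L'(f,1)\ne0$; since membership in this space forces root number $-1$, and hence $L(f,1)=0$, the point is that $f$ has analytic rank exactly $1$. I would run $M$ through
\[
\{37,43,53,61,67\}\ \cup\ \{p\text{ prime}:p>71\}\ \cup\ \{p^2:p\text{ prime},\ p\ge 11\}\ \cup\ \{2^7,\,3^5,\,5^3,\,7^3\}.
\]
For the finitely many small members, $\mathcal{S}^{\mathrm{new}}_2(\Gamma_0(M))^-\ne 0$ follows from the genus formula for $X_0^*(M)=X_0(M)/\omega_M$ together with the dimension formulas for new subspaces---for primes this is precisely the classical fact that, among $p\ge 37$, the primes $37,43,53,61,67$ and all $p>71$ satisfy $g(X_0^*(p))>0$ while $41,47,59,71$ do not---and one then checks numerically that some newform in that space has nonvanishing first derivative; for $M=37,43,53,61,67$ one may just exhibit an elliptic curve of conductor $M$ of analytic rank $1$, and $2^7,3^5,5^3,7^3$ are the relevant prime powers of $2,3,5,7$ (which is why the exponents $6,4,2,2$ appear in the bound below). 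For the infinitely many remaining levels---all sufficiently large primes $p$, and all prime squares $p^2$---one invokes a non-vanishing theorem for central derivatives of $L$-functions of weight-$2$ newforms (in the circle of Iwaniec--Sarnak and Kowalski--Michel), producing a root-number-$(-1)$ newform of that level with $L'(\cdot,1)\ne0$ once the level is large; the finitely many intermediate levels not covered by such a theorem are handled by direct computation.

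Granting this, part (a)(1) follows by applying the auxiliary input at $M=p$ (for $p\in\{37,43,53,61,67\}$ or $p>71$ dividing $N$) and transferring along $X_N\to X_p$, and part (a)(2) follows by applying it at $M=p^2$ (for $p\ge 11$ with $p^2\mid N$) and transferring along $X_N\to X_{p^2}$. For part (b), suppose $N$ is divisible by no prime $>71$, by none of $37,43,53,61,67$, by no square $p^2$ with $p\ge 11$, and by none of $2^7,3^5,5^3,7^3$; then $N=2^a3^b5^c7^d\prod_{q\in T}q$ with $a\le 6$, $b\le 4$, $c\le 2$, $d\le 2$ and $T\subseteq\{11,13,17,19,23,29,31,41,47,59,71\}$, so $N$ is at most $2^6\cdot3^4\cdot5^2\cdot7^2$ times the product of the primes $q$ with $11\le q\le 71$ and $q\notin\{37,43,53,61,67\}$. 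Contrapositively, if $N$ exceeds this bound it must be divisible by one of the enumerated primes, prime squares, or prime powers, and we conclude as above.

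The cycle transfer and the combinatorics in (b) are routine, so the main obstacle is the arithmetic input of the second step: producing a weight-$2$ newform with $L'(f,1)\ne0$, i.e.\ controlling the analytic rank from above (ruling out rank $\ge 3$) uniformly over the infinite families of prime and prime-square levels. For the concrete small and intermediate levels this is a finite computation, but the uniform assertion rests on analytic non-vanishing results for modular $L$-functions that lie outside the geometric machinery of the rest of the paper.
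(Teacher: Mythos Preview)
Your overall strategy---reduce to a set of ``generating'' levels $M$ via a covering map $X_N\to X_M$, then invoke Theorem~\ref{th-main} at each $M$---matches the paper's exactly, including the identification of the auxiliary levels $2^7,3^5,5^3,7^3$ and the combinatorics in (b). Two points of comparison are worth noting.

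First, your transfer step is more circuitous than necessary: you push forward the Ceresa cycle and then pass through Zhang's equivalence \cite[Theorem~1.5.5]{Zhang}, which requires checking $(2g-2)\infty=K_{X_M}$. The paper simply observes that for $\pi\colon X_N\to X_M$ of degree $d$, the induced map $\pi^{\times 3}_*$ on $\CH^2$ satisfies $\pi^{\times 3}_*\gks(X_N,\infty)=d\cdot\gks(X_M,\infty)$ directly, so nontriviality of the latter gives nontriviality of the former in one line.

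Second, your non-vanishing input is vaguer than the paper's, and this is where the substance lies. You gesture at asymptotic analytic results (Iwaniec--Sarnak, Kowalski--Michel) and propose handling ``intermediate levels'' by computation. The paper instead invokes \cite[Theorem~2]{DF}, which says that whenever $g(X_0^*(M))\ge 2$ there exists $f\in\mathcal{S}^{\mathrm{new}}_2(\Gamma_0(M))^-$ with $L'(f,1)\ne 0$. This reduces the infinite families to a genus check, supplied by the tables in \cite[Proposition~4.5]{Boya}: $g(X_0^*(p))\ge 2$ for $p=67,73,97,103,107,109,113,127$ or $p>131$, and $g(X_0^*(p^2))\ge 2$ for all $p\ge 11$. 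Only the genus-$1$ cases $p=37,43,53,61,79,83,89,101,131$ and the prime powers $2^7,3^5,5^3,7^3$ then require an LMFDB lookup. This is sharper than an asymptotic statement and makes the ``intermediate'' window explicit and small.
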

\begin{proof}  First notice that if the theorem is true for $M$, then it is true for all multiples of $M$.  Indeed, for $M|N$,   let 
$\pi\colon X_N\twoheadrightarrow X_M$ be the natural quotient map, and write $d$ for its degree.  Then $\pi_*\Delta_{GKS}(X_N,\infty)=d\cdot\Delta_{GKS}(X_M,\infty)$.

(a)(1): Assume first $p |N$, we can assume $N=p$  by the above comment.  By \cite[Proposition 4.5]{Boya}, for primes $p=67,73,97, 103,107, 109, 113, 127$ or $p>131$, we get the genus of $X_0^*(p)$ is at least $2$. By \cite[Theorem 2]{DF},  there exists a normalized weight $2$ new form $f \in \mathcal{S}^{\rm new}_2(\Gamma_0(p))^-$ satisfying $L'(f,1) \ne 0$. The claim follows then  from  Theorem \ref{th-main}.

 For the cases listed in \cite[Proposition 4.5]{Boya} where the genus of $X_0^*(p)$ equals to $1$, that is for $p=37, 43, 53, 61, 79, 83, 89, 101, 131$, we checked from the database LMFDB \cite{lmfdb} that  there exists a unique normalized weight $2$ new form $f \in \mathcal{S}^{\rm new}_2(\Gamma_0(p))^-$ and  $L'(f,1) \ne 0$. The claim is again true.

 (a)(2) \,  By (a)(1) just proved, we just need to consider the primes  
$$p=2, 3, 5, 7, 11, 13, 17, 19, 23, 29, 31, 41, 47, 59, 71,$$
By  the table in \cite[Proposition 4.5]{Boya}, $X_0^*(p^2)$  has genus as least $2$ for  $p \ge 11$. The results \cite[Theorem 2]{DF} and Theorem \ref{th-main} prove these cases too. 

(b)\,   The database LMFDB \cite{lmfdb} shows that there are normalized new forms $f$  of weight $2$ and level $N$ with $L'(f, 1) \ne 0$ for $N=2^7, 3^5, 5^3, 7^3$.  Now Claim (b) is clear. 
\end{proof}

\begin{rmk} Without appealing to database LMFDB \cite{lmfdb}, Theorem \ref{thm-concrete} (b) would follow from the following analytic result: when $N$ is big enough (perhaps effective but not fun to calculate), there is always some new form $f$ of weight $2$ and level $M|N$ with $L'(f, 1) \ne 0$.  When $N$ is restricted to prime, a much stronger theorem was proved in \cite{KM}. The method there should be easily extended to prove the above claim for general $N$ although we did not find precise reference for this extension. On the other extreme, when $-N$ is a fundamental discriminant of an imaginary quadratic field $\mathbb Q(\sqrt{-N})$. Rohrlich  constructed one or two canonical family of CM Hecke eigenforms of  weight  $2$ and level either $N^2$ or $4 N^2$ in  \cite{Roh} (see also \cite{Yang} for $4\| N$).  Montgomery and Rohrlich (\cite{MR}) proved the central $L$-value always non-vanishing when root number is $1$,  and S.D. Miller and Yang (\cite{MY}) proved that the central derivative of the $L$-function is always non-vanishing when the root number is $-1$.
    \end{rmk}

\subsection{Projections and Jacobians}\label{S5.3}

Before commencing the proof that Abel-Jacobi-nontriviality of $\gks(X_N,o)$ is independent of base point $o\in X_N(\CC)$, we introduce some projection morphisms for the Hodge structures and intermediate Jacobians involved.  These will all be defined at the level of algebraic cycles.  Some tedious notational preliminaries are needed, but the resulting self-contained analysis avoids other complications in the literature and should be applicable to other kinds of Abel--Jacobi maps ($\ell$-adic, $p$-adic, etc.).

Recall throughout the paper, all Chow groups and Jacobians are taken $\otimes \QQ$ and thus ``$\neq 0$'' and ``nontrivial'' mean in effect ``is nontorsion'' in the integral Chow groups.
Given a rational Hodge structure $H$ of odd weight $2p-1$, we  define its Jacobian to be the complex torus
$$J(H):=\Extm^1(\QQ(-p),H)\cong \frac{H_{\CC}}{F^p H_{\CC}+H}.$$
Note that when $H=H^1(X)$, the singular cohomology of a curve $X$, we see the classical Jacobian\footnote{Viewing the Jacobian variety $\mathrm{Jac}(X)$ as a group, $J(X)$ means $\mathrm{Jac}(X)\otimes \QQ$.} $J(H^1(X))=J(X)$, and that exact sequences of such Hodge structures produce exact sequences of Jacobians.  As above, we let $X$ be a modular curve of genus $g>2$; we write $o,P,Q,\infty\in X(\CC)$ for points, with $\infty$ a cusp.

Writing $\I\subset \{1,2,3\}$ (and $\hat{\I}$ its complement), let $\pi_{\I}\colon X^3\onto \prod_{i\in \I}X_i$ denote the coordinate projections; so for example $\pi_{23}=\pi_{\hat{1}}$.  Order of coordinates is always preserved.  Let $\imath_{\I}^P\colon X\into \xT$, $\mathcal{I}_i^P\colon \xt\into\xT$, and if $\I\subset\{1,2\}$, $\jmath_{\I}^P\colon X\into \xt$ denote inclusions:
\begin{itemize}[leftmargin=0.5cm]
\item $\imath_{\I}^P(x),\jmath_{\I}^P(x)$ have $i^{\text{th}}$ coordinate $x$ for $i\in \I$ and $P$ for $i\notin \I$; and
\item $\mathcal{I}_i^P$ inserts $P$ in the $i^{\text{th}}$ coordinate.
\end{itemize}
Define cycles $X_{\I}^P:=(\imath_{\I}^P)_*X$ (or $(\jmath_{\I}^P)_* X)$) in $\CH^2(\xT)$ (resp.~$\CH^1(\xt)$), and $$P_i:=(\mathcal{I}_i^P)_*(\xt)=\pi_i^*P\in \CH^1(\xT).$$  So for instance, with this notation we have $P_1P_2=X_3^P$, and 
\begin{equation}\label{en1}
\gks(X,P):=X_{123}-\sum_{i<j}X_{ij}^P-\sum_k X_i^P.
\end{equation}
We will later specialize $P$ to a general point $o \in X$ or the cusp $\infty \in X$.

Next we introduce our main symmetrization and projection maps:
\begin{equation}\label{en2}
\sym:=\tfrac{1}{6}\sum_{g\in\mathfrak{S}_3}g^*\;\;\text{ and }\;\;\prp^P:=\text{id}-\sum_i \pi_{\hat{i}}^*(\mathcal{I}_i^P)^*,
\end{equation}
which may be viewed as endomorphisms of $\CH^*(\xT)$, $H^*(\xT)$, or related Jacobians.  

More precisely, writing $H^1:=H^1(X)$, the endomorphism $\prp^P$ induces a projection\footnote{The meaning of ``projection'' includes being the identity on the target of $\onto$.} $H^3(\xT)\onto (H^1)^{\otimes 3}\subset H^3(\xT)$, while $\sym$ induces $(H^1)^{\otimes 3}\onto \bigwedge^3 H^1\subset (H^1)^{\otimes 3}$.  Their composite $\Sym^P:=\sym\circ \prp^P$ thus projects $H^3(\xT)\onto \bigwedge^3 H^1\subset H^3(\xT)$. By definition, we have $\sym(\gks(X,P))=\gks(X,P)$.

Finally, we need some operations relating $H^1$ and $H^3(\xT)$.  Set $$\theta:=X_{12}-X_1^{\infty}-X_2^{\infty}\in \CH^1(\xt)$$ and $$\tilde{\theta}:=3\cdot\sym (\pi_{12}^*\theta)\in \CH^1(\xT).$$  Then by denoting intersection product (cup-product on the level of cohomology) by ``$\cdot$'', we have
\begin{equation}\label{en3}
\theta_*:= \prp^{\infty}\circ (\tilde{\theta}\cdot)\circ (\sym\circ \pi_1^*)
\end{equation}
which maps $\CH^1(X)\to \CH^2(\xT)$ and embeds $H^1\into \bigwedge^3 H^1$.  

Denote the image of this embedding by $\theta H^1$.  For any divisor $Z\in \CH^1(\xt)$, we get a map
\begin{equation}\label{en4}
Z^*:=\pi_{3*}\circ ((\pi_{12}^*)\cdot)
\end{equation}
which maps $\CH^2(\xT)\to \CH^1(X)$ and $H^3(\xT)\to H^1$.  
We are especially interested in the latter as a morphism $Z^*\colon \bigwedge^3 H^1\to H^1$. 

Recall from Definition \ref{def:ChowHeegner}, for the cycle $\gks(X,o)$, we use notation $\Pi_Z(\gks(X,o)) = Z^*(\gks(X,o))$ and call this divisor the Chow--Heegner divisor.

Define the ``primitive'' sub-Hodge structure 
\begin{equation}\label{en5}
\textstyle\wpr:=\ker(\theta^*)\subset \bigwedge^3 H^1.
\end{equation}

Note that $\AJ$ is functorial with respect to correspondences, and Equations \eqref{en1}-\eqref{en4} are all induced by correspondences.

\begin{lem}\label{l0}
$\AJ(\gks(X,P))\in J(\bigwedge^3H^1)\subset J(H^3(\xT))$.
\end{lem}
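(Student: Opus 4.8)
The plan is to exhibit $\AJ(\gks(X,P))$ as a fixed point of the projector $\Sym^P$ and then invoke functoriality of the Abel--Jacobi map for correspondences. First, since $\gks(X,P)$ is homologically trivial of codimension $2$ (as recorded in its definition), $\AJ(\gks(X,P))$ is defined and a priori lies in $J(H^3(\xT))$. Next I would record the Künneth decomposition $H^3(\xT)=(H^1)^{\otimes 3}\oplus M$, where (using $H^{k}(X)=0$ for $k\ge 3$) the complement $M$ is the sum of the six ``mixed'' summands isomorphic to $H^2\otimes H^1\otimes H^0$; since $H\mapsto J(H)$ is additive, this splits $J(H^3(\xT))=J((H^1)^{\otimes 3})\oplus J(M)$. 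As recalled before the lemma, $\Sym^P=\sym\circ\prp^P$ is a self-correspondence of $\xT$ whose action on $H^3(\xT)$ is the idempotent projection onto $\bigwedge^3H^1$, so its action on intermediate Jacobians is the projection onto the direct summand $J(\bigwedge^3H^1)$. Granting that $\Sym^P$ fixes $\gks(X,P)$ \emph{as a Chow class}, functoriality of $\AJ$ for the correspondences $\sym,\prp^P$ then gives
\[
\AJ(\gks(X,P))=\AJ\bigl(\Sym^P(\gks(X,P))\bigr)=\Sym^P\bigl(\AJ(\gks(X,P))\bigr)\in J(\bigwedge^3H^1),
\]
which is the claim.

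So the remaining point is the Chow-theoretic identity $\Sym^P(\gks(X,P))=\gks(X,P)$. Since $\sym(\gks(X,P))=\gks(X,P)$ holds by construction, it suffices to show $\prp^P(\gks(X,P))=\gks(X,P)$, i.e.\ (by the definition of $\prp^P$ in \eqref{en2}) that $(\mathcal{I}_i^P)^*\gks(X,P)=0$ in $\CH^2(\xt)$ for each $i\in\{1,2,3\}$. By the $\mathfrak{S}_3$-symmetry it is enough to treat $i=3$. The map $\mathcal{I}_3^P$ identifies $\xt$ with the smooth divisor $\{x_3=P\}\subset\xT$, whose normal bundle is pulled back from the point $P\in X$ and is therefore trivial. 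The three summands of $\gks(X,P)$ contained in this divisor --- namely $X_{12}^P,X_1^P,X_2^P$ --- are pushforwards along $\mathcal{I}_3^P$, so the self-intersection (excess-intersection) formula together with the vanishing of $c_1$ of the normal bundle makes their pullbacks zero. The four remaining summands $X_{123},X_{13}^P,X_{23}^P,X_3^P$ each meet $\{x_3=P\}$ transversally in the single point $(P,P,P)$, hence each pulls back to the class of the point $(P,P)\in\xt$; and by the definition of the modified diagonal these four terms enter $\gks(X,P)$ with coefficients $+1,-1,-1,+1$, which sum to $0$. Therefore $(\mathcal{I}_3^P)^*\gks(X,P)=0$, completing the reduction.

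The main obstacle is precisely this last computation: several components of $\gks(X,P)$ fail to meet $\{x_i=P\}$ properly, so one cannot use a naive transverse count and must appeal to the excess-intersection formula, the whole point being that the relevant normal bundle is trivial. Everything else --- the Künneth bookkeeping, the additivity of $J(-)$, and the functoriality of $\AJ$ with respect to the correspondences $\sym$ and $\prp^P$ --- is routine and already set up in the text. Alternatively, one could simply invoke the fact, built into the Gross--Schoen construction, that the modified diagonal cycle is annihilated by each restriction $(\mathcal{I}_i^P)^*$, cf.~\cite{GS}.
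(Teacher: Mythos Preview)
Your proof is correct, but the route differs from the paper's in a small yet instructive way. The paper applies $\Sym^Q$ with an \emph{auxiliary} point $Q$ (possibly $\neq P$); with $Q$ generic all seven components of $\gks(X,P)$ meet $\{x_i=Q\}$ transversally, so one computes directly that $\Sym^Q(\gks(X,P))=\gks(X,P)-3\,\sym\,\pi_{12}^*\mathcal{B}_{P,Q}$, where $\mathcal{B}_{P,Q}=(P,P)-(P,Q)-(Q,P)+(Q,Q)$ is a zero-cycle on $\xt$ whose two projections to $X$ vanish, hence is $\AJ$-trivial. Thus the paper only obtains $\Sym^Q(\gks(X,P))\ajeq\gks(X,P)$, which suffices.

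You instead take $Q=P$, which forces three of the components to lie inside $\{x_i=P\}$, and handle this via the self-intersection/excess formula using that the normal bundle of $X\times X\times\{P\}$ in $\xT$ is trivial. This yields the sharper Chow-level identity $\Sym^P(\gks(X,P))=\gks(X,P)$ (equivalently, your $Q=P$ makes $\mathcal{B}_{P,P}=0$). The trade-off: the paper avoids any excess-intersection argument at the price of an easy $\AJ$-triviality step; your approach gives an exact equality but must justify the non-proper pullbacks. Both are clean and of comparable length; your version has the minor bonus of being an identity in $\CH^2(\xT)$ rather than merely in the Jacobian.
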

\begin{proof}
On $\xt$, the zero-cycle $\mathcal{B}_{P,Q}:=(P,P)-(P,Q)-(Q,P)+(Q,Q)$ is Abel-Jacobi-equivalent to zero (written $\ajeq 0$), since its projections to the two copies of $X$ are zero.  

A short computation gives
$$\Sym^Q(\gks(X,P))=\gks(X,P)-3\cdot\sym\,\pi_{12}^*\mathcal{B}_{P,Q}\ajeq \gks(X,P),$$
whence $\AJ(\gks(X,P))=\Sym^Q\AJ(\gks(X,P))$.  

As $\Sym^Q$ maps $J(H^3(\xT))\onto J(\bigwedge^3H^1)$, the result follows.
\end{proof}

For a divisor $Z \in \CH^1(X^2)$ as above, set $\theta^P*Z:=\jmath_{12}^*Z-(\jmath_1^{P})^*Z-(\jmath_2^P)^*Z$.  

Write $|W|:=\deg(W)$ for any $0$-cycle $W$.

\begin{lem}\label{l4}
$Z^*\gks(X,P)=\theta^P*Z-|\theta\cdot Z|P\in \CH^1_{\text{hom}}(X)$. 
\end{lem}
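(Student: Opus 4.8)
The plan is to prove Lemma \ref{l4} by essentially repeating the computation in Lemma \ref{lem222}, but now being careful to keep track of the fact that we are working modulo Abel--Jacobi equivalence and to identify the constant coefficient of $P$ as $-|\theta\cdot Z|$. So first I would expand the intersection $\pi_{12}^*Z\cdot\gks(X,P)$ term by term using the expression \eqref{en1}, noting that by the projection formula $\pi_{12}^*Z\cdot X_{\I}^P=(\imath_{\I}^P)_*(\imath_{\I}^P)^*\pi_{12}^*Z$, and that $(\imath_{\I}^P)^*\pi_{12}^*Z$ equals $\jmath_{12}^*Z$, $(\jmath_1^P)^*Z$, or $(\jmath_2^P)^*Z$ according to $\I\cap\{1,2\}=\{1,2\},\{1\},\{2\}$, while the terms with $\I\cap\{1,2\}=\emptyset$ or $\I=\{3\}$ contribute $0$ since $\pi_{12}^*Z\cdot X_3^P=\pi_{12}^*(Z\cdot\{(P,P)\})=0$ in $\CH^3(X^2)=0$ (exactly as in the proof of Lemma \ref{lem222}).

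After applying $\pi_{3*}$, the surviving terms are $\jmath_{12}^*Z$ (from $X_{123}$), $-(\jmath_1^P)^*Z$ (from $-X_{13}^P$), $-(\jmath_2^P)^*Z$ (from $-X_{23}^P$), and the contributions $-X_{12}^P$, $+X_1^P$, $+X_2^P$, which each push forward under $\pi_3$ to a multiple of the point $P$ since these cycles are contained in fibers of $\pi_3$ over $P$. Collecting these, $\pi_{3*}(\pi_{12}^*Z\cdot X_{12}^P)=|\jmath_{12}^*Z|\,P$, and $\pi_{3*}(\pi_{12}^*Z\cdot X_i^P)=|(\jmath_i^P)^*Z|\,P$ for $i=1,2$. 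Therefore
\[
Z^*\gks(X,P)=\jmath_{12}^*Z-(\jmath_1^P)^*Z-(\jmath_2^P)^*Z-\bigl(|\jmath_{12}^*Z|-|(\jmath_1^P)^*Z|-|(\jmath_2^P)^*Z|\bigr)P,
\]
which by the definition $\theta^P*Z:=\jmath_{12}^*Z-(\jmath_1^P)^*Z-(\jmath_2^P)^*Z$ is precisely $\theta^P*Z-|\theta^P*Z|\,P$. The claim then reduces to the identity $|\theta^P*Z|=|\theta\cdot Z|$ in $\QQ$, i.e.\ that the degree of $\theta^P*Z$ as a $0$-cycle on $X$ is computed by the intersection number of $Z$ with $\theta=X_{12}-X_1^\infty-X_2^\infty$ on $X^2$. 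This is immediate once one observes that $|\jmath_{12}^*Z|=Z\cdot X_{12}$ (the diagonal), $|(\jmath_i^\infty)^*Z|=Z\cdot X_i^\infty$, and that degree of a pullback of a divisor along an embedded curve equals the intersection number of the divisor with that curve; since degree is independent of the base point $P$ chosen in $(\jmath_i^P)^*$ (moving $P$ changes $(\jmath_i^P)^*Z$ by a divisor of degree $0$), one may as well compute with $P=\infty$, giving $|\theta^P*Z|=Z\cdot(X_{12}-X_1^\infty-X_2^\infty)=|\theta\cdot Z|$. Finally, this difference lies in $\CH^1_{\text{hom}}(X)$ because it visibly has degree $0$, and for divisors on a curve homological and degree-zero are the same.

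I do not expect any genuine obstacle here; the only points requiring a little care are (i) justifying that the $X_3^P$ term vanishes, which I will handle exactly as in Lemma \ref{lem222} via the ring homomorphism property of $\pi_{12}^*$, and (ii) the bookkeeping identifying the constant with $|\theta\cdot Z|$ rather than some other combination of degrees — this is where I will invoke base-point independence of the degree of $(\jmath_i^P)^*Z$ to reduce to $P=\infty$ and read off the intersection number with $\theta$. The statement is really a refinement of Lemma \ref{lem222} that repackages the computation in the intersection-theoretic language of \eqref{en1}--\eqref{en4}, so the proof can be kept short by citing that earlier computation and only spelling out the new degree identity.
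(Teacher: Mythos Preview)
Your proposal is correct and takes essentially the same approach as the paper, which simply says ``This is just a restatement of Lemma~\ref{lem222} with this new notation.'' You have spelled out the one point the paper leaves implicit---the identity $|\theta^P*Z|=|\theta\cdot Z|$, which follows from base-point independence of the degree of $(\jmath_i^P)^*Z$---so your write-up is in fact more complete than the paper's.
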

\begin{proof}
This is just a restatement of Lemma \ref{lem222} with this new notation.
\end{proof}

\subsection{Independence of basepoint}\label{S5.4}
In Section \ref{subsec:GKSinfty}, we showed that the cycle $\gks(X_N,\infty)$ is nontrivial in $\CH^1(X^3_N)$. In this section, we will show that this nontriviality statement holds with the base point $\infty$ replaced by any base point, which concludes Theorem \ref{thm:main2}. Moreover, we will show the same nontriviality statement holds for the Ceresa cycle $X^+_o-X^-_o \in \CH^{g-1}(\Jac(X))$ associated to $(X_N,o)$ where $o$ is any basepoint, which concludes Theorem \ref{thm:main1}.

In this subsection we assume that $X=X(\Gamma)$ is modular with $\Gamma$ \emph{torsion-free}.  (In particular, we can take $X=X_N$.) 

Consider the diagonal $X_{12}=\jmath_{12}(X)\subset \xt$, its normal bundle $\mathcal{N}$, and its self-intersection $\jmath_{12}^* X_{12}=c_1(\mathcal{N})\in \CH^1(X)$ in $\xt$ as a cycle on $X$.  Since $\omega_{\xt}|_{X_{12}}\cong \omega_X^{\otimes 2}$, adjunction gives $\mathcal{N}\cong \omega_X\otimes \omega_{\xt}^{-1}|_{X_{12}}\cong \omega_X^{-1}$.  Writing $K_X$ for a canonical divisor, the self-intersection is thus
\begin{equation}\label{en6}
\jmath_{12}^*X_{12}=-K_X.	
\end{equation}
This immediately yields $|\theta^2|=-2g$, and thus (by Lemma \ref{l4}) that
\begin{equation}\label{en7}
\theta^*\gks(X,P)=\theta^P*\theta-|\theta^2|P=-K_X+(2g-2)P.
\end{equation}
Recall $\theta^*\gks(X,P)=\Pi_{X_{12}}(\gks(X,P))$ and the computation above shows the statement in Example \ref{eg:diagonal}.

In the following Lemma, we show that the Abel--Jacobi image of $\gks(X,\infty)$ lies in the primitive part of the intermediate Jacobian. We first show by computation that when the base point is an element in $\CH^1(X)$ satisfying $(2g-2)o=K_X \in \CH^1(X)$, the Abel--Jacobi image of $\gks(X,o)$ lies in the primitive part $J(\wpr)$. Then we show that for modular curves associated to a torsion-free congruence subgroup $\Gamma$, the cusp $\infty$ is such an element in $\CH^1(X)$.

\begin{lem}\label{l2}
$\AJ(\gks(X,\infty))\in J(\wpr)$.
\end{lem}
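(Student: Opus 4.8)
The plan is to reduce the statement, via the functoriality of $\AJ$ with respect to correspondences together with Lemma \ref{l0}, to the identity \eqref{en7} and to the fact that on $X=X(\Gamma)$ with $\Gamma$ torsion-free the canonical class is a rational multiple of the cusp $(\infty)$. Concretely: the morphism of Hodge structures $\theta^{*}\colon\bigwedge^{3}H^{1}\to H^{1}$ (the operator $Z^{*}$ of \eqref{en4} attached to $Z=\theta\in\CH^{1}(X^{2})$, read on cohomology) fits into a short exact sequence $0\to\wpr\to\bigwedge^{3}H^{1}\to\im(\theta^{*})\to 0$ with $\im(\theta^{*})\subseteq H^{1}$; since exact sequences of odd-weight Hodge structures induce exact sequences of Jacobians, the induced map $\theta^{*}_{J}\colon J(\bigwedge^{3}H^{1})\to J(H^{1})=J(X)$ factors as a surjection onto $J(\im\theta^{*})$ followed by the inclusion $J(\im\theta^{*})\hookrightarrow J(X)$, so $\ker(\theta^{*}_{J})=J(\wpr)$. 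By Lemma \ref{l0}, $\AJ(\gks(X,\infty))\in J(\bigwedge^{3}H^{1})$, and by functoriality of $\AJ$,
\[\theta^{*}_{J}\bigl(\AJ(\gks(X,\infty))\bigr)=\AJ\bigl(\theta^{*}\gks(X,\infty)\bigr)\in J(X).\]
Thus it suffices to show that $\theta^{*}\gks(X,\infty)$ is zero in $\CH^{1}_{\mathrm{hom}}(X)$ — which, since all groups here are $\otimes\,\QQ$, amounts to this divisor being torsion in the integral Chow group.

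By \eqref{en7} with $P=\infty$ one has $\theta^{*}\gks(X,\infty)=-K_{X}+(2g-2)(\infty)$, so the entire proof reduces to the claim $K_{X}\equiv(2g-2)(\infty)$ modulo torsion for $X=X(\Gamma)$ with $\Gamma$ torsion-free. Here is the argument I would give: since $\Gamma$ has no elliptic points, the Kodaira--Spencer isomorphism takes the clean form $\mathcal{L}_{W}^{\otimes 2}\cong\omega_{X}(C)$, where $C=\sum_{\text{cusps}}(P)$ is the reduced cuspidal divisor; and the modular form $\Delta$, viewed as a global section of $\mathcal{L}_{W}^{\otimes 12}$ pulled back from $\SL_{2}(\BZ)$, is nonvanishing on $\BH$ and hence — again because there are no elliptic points — has divisor supported entirely on the cusps. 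Combining the two facts, $6K_{X}=\mathrm{div}(\Delta)-6C$ is a divisor class supported on cusps; by the Manin--Drinfeld theorem (\cite{Manin},\cite{Drinfeld}) any cuspidal divisor class of degree $d$ equals $d\,(\infty)$ modulo torsion, and taking $d=\deg(6K_{X})=6(2g-2)$ gives $6K_{X}\equiv6(2g-2)(\infty)$; dividing by $6$ in the torsion-free group $\CH^{1}(X)\otimes\QQ$ yields the claim. Feeding this back, $\theta^{*}\gks(X,\infty)=0$ in $\CH^{1}(X)$, hence $\theta^{*}_{J}(\AJ(\gks(X,\infty)))=0$, and therefore $\AJ(\gks(X,\infty))\in\ker\theta^{*}_{J}=J(\wpr)$.

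The reductions in the first paragraph are formal (functoriality of $\AJ$, exactness of $J(-)$, the purely combinatorial identity \eqref{en7}); the real content, and the main obstacle, is the congruence $K_{X}\equiv(2g-2)(\infty)$ modulo torsion. This is precisely where the torsion-freeness of $\Gamma$ is used in an essential way (no elliptic points, so Kodaira--Spencer and the divisor of $\Delta$ behave cleanly), and it is also the input cited as ``see the proof of Lemma \ref{l2}'' in the proof of Proposition \ref{prop:mainprop} and in passing from Theorem \ref{th-main} to Theorem \ref{thm:main2}; I would therefore isolate it as a self-contained step. Finally, I would record that the same computation proves, more generally, $\AJ(\gks(X,o))\in J(\wpr)$ for any base point $o$ given by a degree-one divisor class with $(2g-2)o=K_{X}$ in $\CH^{1}(X)$, since \eqref{en7} is linear in $P$; this is the version needed for the base-point analysis in this subsection.
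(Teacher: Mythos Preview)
Your proof is correct and follows essentially the same route as the paper's: reduce via \eqref{en5} and functoriality of $\AJ$ to showing $\theta^{*}\gks(X,\infty)=0$, invoke \eqref{en7} with $P=\infty$, and then argue that $K_{X}$ is represented by a cuspidal divisor using $\Delta$ and the torsion-freeness of $\Gamma$, finishing with Manin--Drinfeld. The only differences are cosmetic --- you spell out the identification $\ker(\theta^{*}_{J})=J(\wpr)$ via exactness of $J(-)$ on pure weight-$3$ Hodge structures (which the paper leaves implicit), and you phrase the cuspidality of $K_{X}$ via Kodaira--Spencer $\mathcal{L}_{W}^{\otimes 2}\cong\omega_{X}(C)$ rather than the paper's equivalent direct observation that $\Delta(\tau)\,d\tau^{\otimes 6}$ trivializes $\omega_{X}^{\otimes 6}|_{Y}$.
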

\begin{proof}
Setting $P=\infty$ in \eqref{en7} gives 
$$\theta^*\AJ(\gks(X,\infty))=\AJ(\theta^*\gks(X,\infty))=\AJ(-K_X+(2g-2)\infty).$$ 
By \eqref{en5}, it suffices to show that this is zero.  Indeed, we will show that $K_X-D$ is zero in $\CH^1(X)$ for any divisor $D$ of degree $2g-2$ supported on the cusps.

Let $Y\subset X$ be the complement of the cusps, and write $Y\cong \Gamma\backslash\BH$ with $\Gamma\leq \mathrm{SL}_2(\ZZ)$.  As a function on the upper half-plane, the discriminant $\Delta$ is a constant multiple of $g_2^3-27g_3^2$, hence is nowhere vanishing. It is also an element of $S_{12}(\Gamma)$, so that $\Delta(\tau)d\tau^{\otimes 6}\in H^0(Y,\omega_X^{\otimes 6}|_Y)$ is a nowhere vanishing section trivializing the restriction of $\omega_X^{\otimes 6}\cong \mathcal{O}_X(6K_X)$ to $Y$. Note that if $Y$ has elliptic points, then $d\tau^{\otimes 6}$ has nontrivial divisor as a section of $\omega_X^{\otimes 6}|_Y$.  So the torsion-free hypothesis on $\Gamma$ is really needed here.

By exactness of the localization sequence
$$\CH^0(X\setminus Y)\to \CH^1(X)\to \CH^1(Y)\to 0$$
it follows that $6K_X\underset{\text{rat}}{\equiv}E$ for some $E\in \text{Div}(X)$ supported on the cusps.  So $6(K-D)=6K-6D\underset{\text{rat}}{\equiv}E-6D$ is of degree $0$ and supported on the cusps, hence torsion (i.e.~zero in the rational Chow group) by the work of Manin\cite{Manin} and Drinfeld \cite{Drinfeld}.
\end{proof}

Following from the statement that the cusp $\infty\in\CH^1(X)$ satisfies $(2g-2)\infty=K_X\in\CH^1(X)$ and Example \ref{eg:diagonal}, we could already conclude that if the cycle $\gks(X,\infty)$ is nontrivial in $\CH^2(X^3)$, then the same nontriviality statement holds for any basepoint which concludes Theorem \ref{thm:main2}. In the following statements Lemma \ref{l1}, Lemma \ref{l3}, and Proposition \ref{pr24}, we give a proof of the independence of base point from a Hodge-theoretic perspective. Namely, the Abel--Jacobi images of the cycles $\gks(X,P)-\gks(X,Q)$ where $P,Q \in \CH^1(X)$ are any two degree $1$ divisors and $\gks(X,\frac{1}{2g-2}K_X)$ lie in different direct components of the intermediate Jacobian. Thus, the nontriviality of $\gks(X,\frac{1}{2g-2}K_X)$ would not be ``killed'' by changing a base point. This Hodge-theoretic argument will allow us to deduce the nontriviality of the Ceresa cycle from the nontriviality of the Gross--Kudla--Schoen cycle and conclude Theorem \ref{thm:main1}.

\begin{lem}\label{l1}
We have the splitting of rational Hodge structures $\bigwedge^3H^1=\wpr\oplus \;\theta H^1$, and of Jacobians $J(\bigwedge^3H^1)=J(\wpr)\oplus J(\theta H^1)$ \textup{(}where $J(\theta H^1)\cong J(X)$\textup{)}.
\end{lem}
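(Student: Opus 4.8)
The plan is to present $\theta H^1=\im(\theta_*)$ and $\wpr=\ker(\theta^*)$ as complementary direct summands of $\bigwedge^3H^1$ by building an explicit idempotent out of $\theta_*$ and $\theta^*$; the crux is that the composite $N:=\theta^*\circ\theta_*\colon H^1\to H^1$ is an isomorphism. Granting this, put $p:=\theta_*\circ N^{-1}\circ\theta^*\in\End\bigl(\bigwedge^3H^1\bigr)$. Then $p^2=\theta_*N^{-1}(\theta^*\theta_*)N^{-1}\theta^*=\theta_*N^{-1}\theta^*=p$; the inclusion $\im(p)\subseteq\im(\theta_*)=\theta H^1$ is clear and $p\circ\theta_*=\theta_*$ gives $\im(p)=\theta H^1$; and $\ker(p)=\ker(\theta^*)=\wpr$, since $\theta^*x=0$ plainly forces $px=0$, while $px=0$ gives $\theta_*N^{-1}\theta^*x=0$ and hence $\theta^*x=0$ because $\theta_*$ is injective ($N$ being invertible). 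Thus $\bigwedge^3H^1=\theta H^1\oplus\wpr$. As $\sym$, $\prp^\infty$, intersection with $\tilde\theta$, and the relevant projections and Gysin maps are all induced by algebraic correspondences (the remark before Lemma \ref{l0}), $\theta_*,\theta^*$, and therefore $p$, are morphisms of (polarizable, rational) Hodge structures, so this is a splitting of the Hodge structure $\bigwedge^3H^1$. Applying the additive functor $J(-)=\Extm^1(\QQ(-2),-)$, which takes direct sums to direct sums (cf.\ the comment after the definition of $J(H)$), gives $J(\bigwedge^3H^1)=J(\theta H^1)\oplus J(\wpr)$; and since $\theta_*$ restricts to an isomorphism of Hodge structures onto $\theta H^1$, one has $J(\theta H^1)\cong J(H^1)=J(X)$.

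It remains to prove $N$ is invertible. The idea is to identify, on cohomology, $\theta_*$ and $\theta^*$ with the two standard operators attached to the cup-product symplectic polarization $\langle\,,\,\rangle$ on $H^1=H^1(X)$: the Lefschetz-type map $L\colon H^1\to\bigwedge^3H^1$, $\alpha\mapsto\vartheta\wedge\alpha$, where $\vartheta\in\bigwedge^2H^1$ is the coform inverse to $\langle\,,\,\rangle$, and the contraction $\Lambda\colon\bigwedge^3H^1\to H^1$ with $x\wedge y\wedge z\mapsto\langle x,y\rangle z-\langle x,z\rangle y+\langle y,z\rangle x$. This identification is bookkeeping: the divisor $\theta=X_{12}-X_1^\infty-X_2^\infty$ acts on $H^1$ as the identity correspondence (the diagonal does, and the two ``line'' classes annihilate $H^1$), so its $H^1(X_1)\otimes H^1(X_2)$ Künneth component is exactly $\vartheta$; feeding this through the symmetrizations in $\sym$ and in $\tilde\theta=3\,\sym(\pi_{12}^*\theta)$, the projector $\prp^\infty$, and the push--pull operation $\pi_{3*}\circ(\pi_{12}^*\theta\cdot-)$ turns the defining formulas \eqref{en3} for $\theta_*$ and \eqref{en4} (with $Z=\theta$) for $\theta^*$ into $\theta_*=c_1L$ and $\theta^*=c_2\Lambda$ with explicit nonzero rational constants $c_1,c_2$. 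Then $N=c_1c_2\,\Lambda L$, and a one-line computation in a symplectic basis $e_1,\dots,e_g,f_1,\dots,f_g$ of $H^1$ (with $\vartheta=\sum_k e_k\wedge f_k$) yields $\Lambda L=(g-1)\,\id_{H^1}$, invertible since $g>2$. The same symplectic-basis computation also gives directly $\bigwedge^3H^1=L(H^1)\oplus\ker\bigl(\Lambda|_{\bigwedge^3H^1}\bigr)$, reconfirming the splitting and the identifications $\theta H^1=L(H^1)$, $\wpr=\ker\bigl(\Lambda|_{\bigwedge^3H^1}\bigr)$.

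The main obstacle is this bookkeeping step: matching the cycle-theoretic operators $\theta_*,\theta^*$ of \eqref{en3}--\eqref{en4}, with all their symmetrization factors and the Künneth components of $\theta$ and of the diagonal, against the abstract $L$ and $\Lambda$, and in particular confirming that $c_1$ and $c_2$ are nonzero. Everything downstream --- the idempotent formalism, additivity of $J(-)$, and the linear-algebra calculation of $\Lambda L$ --- is routine. Note that only the invertibility of $N$ is used, not that it is a scalar; alternatively one may observe that $\theta^*$ and $\theta_*$ are adjoint up to a nonzero scalar with respect to the polarization forms on $\bigwedge^3H^1$ and $H^1$, and then the positivity in the Hodge--Riemann bilinear relations forces $\ker N=0$, avoiding any explicit constants.
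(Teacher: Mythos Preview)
Your argument is correct and follows a genuinely different route from the paper. Both proofs reduce to showing that $N=\theta^*\circ\theta_*$ is invertible on $H^1$, but you work at the level of cohomology, identifying $\theta_*$ and $\theta^*$ (up to nonzero scalars) with the Lefschetz-type embedding $L\colon\alpha\mapsto\vartheta\wedge\alpha$ and the symplectic contraction $\Lambda$, and then computing $\Lambda L=(g-1)\,\id$ in a symplectic basis. The paper instead works at the level of cycles: it computes directly that $3\,\theta_*(P-\infty)\ajeq\gks(X,P)-\gks(X,\infty)$, and then applies the already-established formula \eqref{en7} to get $\theta^*\theta_*(P-\infty)\ajeq\tfrac{2g-2}{3}(P-\infty)$. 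Your approach is conceptually cleaner and places the splitting in the familiar context of Lefschetz decompositions, and your fallback via adjointness and Hodge--Riemann (or simply semisimplicity of polarizable rational Hodge structures) neatly sidesteps the constant-chasing you flag as the main obstacle. The paper's computation, on the other hand, has the advantage of producing the cycle-level identity $3\,\theta_*(P-\infty)\ajeq\gks(X,P)-\gks(X,\infty)$ as a byproduct, which is exactly what is needed for the proof of Lemma~\ref{l3}; with your approach that identity would have to be established separately.
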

\begin{proof}
We must show that $\theta^*\circ \theta_*$ induces an isomorphism.  Since the two assertions in the Lemma are equivalent, and ($\AJ$ of) linear combinations of zero-cycles $P-\infty$ span $J(X)$, it is enough to check that $\theta^*\theta_*(P-\infty)\ajeq\tfrac{2g-2}{3}(P-\infty)$ for all $P$.  Indeed, we compute
\begin{align*}
3\,\theta_*(P-\infty)&=\textstyle\prp^{\infty}\{(\sum_k(P_k-\infty_k))\cdot (\sum_{i<j}(\pi_{ij}^*X_{ij}-\infty_j-\infty_i))\}\\
&=\textstyle 3\,\Sym^{\infty}\{(X_{ij}^P-X_{ij}^{\infty})+2\mathcal{B}_{P,\infty}\}\ajeq 3\,\Sym^{\infty}(X_{ij}^P-X_{ij}^{\infty})\\
&=\textstyle \prp^{\infty}\sum_{i<j}(X_{ij}^P-X_{ij}^{\infty})\\
&=\textstyle\sum_{i<j}(X_{ij}^P-X_{ij}^{\infty}+2\infty_i\infty_j-\infty_i o_j-o_i\infty_j)\\
&\textstyle\ajeq \sum_{i<j}(X_{ij}^P-X_{ij}^{\infty})+\sum_k (X_i^{\infty}-X_i^P)=\gks(X,P)-\gks(X,\infty), 
\end{align*}
and then applying $\theta^*$ gives $(-K_X+(2g-2)P)-(-K_X+(2g-2)\infty)=(2g-2)(P-\infty)$ by \eqref{en7}.
\end{proof}

\begin{lem}\label{l3}
$\AJ(\gks(X,o)-\gks(X,\infty))\in J(\theta H^1)$.
\end{lem}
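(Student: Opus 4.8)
The plan is to deduce the statement directly from the cycle identity already worked out inside the proof of Lemma \ref{l1}. That computation shows, for an arbitrary point $P\in X(\CC)$, the Abel--Jacobi equivalence
\[
3\,\theta_*(P-\infty)\ajeq \gks(X,P)-\gks(X,\infty)
\]
of cycles in $\CH^2(\xT)$. So the first and essentially only step is to specialize $P=o$ in this identity: it already rewrites $\gks(X,o)-\gks(X,\infty)$, up to Abel--Jacobi equivalence, as $\theta_*$ applied to the degree-zero zero-cycle $o-\infty$ on $X$.

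Next I would invoke the functoriality of $\AJ$ with respect to correspondences (recorded just before Lemma \ref{l0}). Since $\theta_*$ is defined in \eqref{en3} as a composite of correspondence-induced operations, this gives $\AJ\circ\theta_*=\theta_*\circ\AJ$ between the relevant groups, and applying $\AJ$ to the identity above yields
\[
\AJ\bigl(\gks(X,o)-\gks(X,\infty)\bigr)=3\,\theta_*\bigl(\AJ(o-\infty)\bigr).
\]
Finally I would note that, by the very definition of $\theta H^1$ as the image of the embedding $\theta_*\colon H^1\hookrightarrow \bigwedge^3 H^1$ induced by \eqref{en3}, the right-hand side lies in $\theta_*\bigl(J(H^1)\bigr)=J(\theta H^1)\subset J(\bigwedge^3 H^1)$, the last inclusion being part of Lemma \ref{l1}. (That $\AJ(\gks(X,P))$ lands in $J(\bigwedge^3 H^1)$ to begin with is Lemma \ref{l0}.) This finishes the argument.

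I do not expect a real obstacle. The one point to keep an eye on is purely bookkeeping: the identity extracted from the proof of Lemma \ref{l1} must be read at the level of Abel--Jacobi classes rather than rational equivalence --- this is exactly how that computation is organized, with all discarded terms (multiples of $\mathcal{B}_{P,\infty}$ and differences of products of cusp classes) being visibly Abel--Jacobi trivial --- so no new input is required, and the proof is a one-line consequence once the bookkeeping is in place.
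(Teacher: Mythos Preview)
Your proposal is correct and follows exactly the paper's approach: the paper's proof is the one-line display $\tfrac{1}{3}\AJ(\gks(X,o)-\gks(X,\infty))=\AJ(\theta_*(o-\infty))=\theta_*(\AJ(o-\infty))$, citing the calculation in the proof of Lemma~\ref{l1}. Your write-up is simply a more explicit unpacking of the same argument.
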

\begin{proof}
By the calculation in the proof of Lemma \ref{l1}, $$\tfrac{1}{3}\AJ(\gks(X,o)-\gks(X,\infty))=\AJ(\theta_*(o-\infty))=\theta_*(\AJ(o-\infty)). \qedhere $$
\end{proof}

\begin{prop}\label{pr24}
If $\AJ(\gks(X,\infty))\neq 0$, then $\AJ(\gks(X,o))\neq 0$ for all $o$.
\end{prop}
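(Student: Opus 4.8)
The plan is to combine the three preceding results—Lemma~\ref{l1}, Lemma~\ref{l2}, and Lemma~\ref{l3}—into an immediate decomposition argument, with essentially no further computation. The key observation is that $\AJ(\gks(X,o))$ lives in $J(\bigwedge^3 H^1)$ by Lemma~\ref{l0}, and this Jacobian splits as a direct sum $J(\wpr)\oplus J(\theta H^1)$ of rational Hodge-theoretic Jacobians by Lemma~\ref{l1}. So it suffices to track where $\AJ(\gks(X,o))$ lands under the projection to the first summand $J(\wpr)$.

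First I would write $\AJ(\gks(X,o))=\AJ(\gks(X,\infty))+\AJ\bigl(\gks(X,o)-\gks(X,\infty)\bigr)$, using linearity of $\AJ$ (which is legitimate since, as noted after \eqref{en5}, $\AJ$ is functorial for correspondences and the cycles $\gks(X,P)$ are all built from correspondences). By Lemma~\ref{l2}, the first term lies in $J(\wpr)$; by Lemma~\ref{l3}, the second term lies in the complementary summand $J(\theta H^1)$. Therefore, under the direct-sum decomposition of Lemma~\ref{l1}, the $J(\wpr)$-component of $\AJ(\gks(X,o))$ is exactly $\AJ(\gks(X,\infty))$.

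Now I would invoke the hypothesis $\AJ(\gks(X,\infty))\neq 0$ in $J(\wpr)$: since the two summands intersect trivially, a class whose $J(\wpr)$-component is nonzero is itself nonzero in $J(\bigwedge^3 H^1)$, hence in $J(H^3(\xT))$. This gives $\AJ(\gks(X,o))\neq 0$ for every $o\in X(\CC)$ (and by the same argument for any degree-$1$ divisor class $o$, e.g.\ $\tfrac{1}{2g-2}K_X$), completing the proof.

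There is really no hard step here—everything of substance has been absorbed into Lemmas~\ref{l1}--\ref{l3}. The one point that merits a sentence of care is making sure the decomposition $J(\bigwedge^3H^1)=J(\wpr)\oplus J(\theta H^1)$ is being applied to $\AJ$-classes correctly: one should note that $\AJ(\gks(X,o))\in J(\bigwedge^3H^1)$ by Lemma~\ref{l0}, so that it does in fact admit such a decomposition, and that the projections $\Sym^\infty$ and $\theta^*,\theta_*$ used to define the splitting are all induced by algebraic correspondences, hence commute with $\AJ$. After that the conclusion is formal.
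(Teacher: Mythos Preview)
Your proof is correct and follows essentially the same approach as the paper: decompose $\gks(X,o)$ as $\gks(X,\infty)+(\gks(X,o)-\gks(X,\infty))$, use Lemmas~\ref{l2} and~\ref{l3} to place the $\AJ$-images of the two pieces in the complementary summands $J(\wpr)$ and $J(\theta H^1)$ from Lemma~\ref{l1}, and conclude. The only minor remark is that your justification for linearity of $\AJ$ via correspondences is unnecessary---$\AJ$ is linear on homologically trivial cycles by construction.
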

\begin{proof}
Obviously $\gks(X,o)=\gks(X,\infty)+(\gks(X,o)-\gks(X,\infty))$, and by Lemmas \ref{l2}-\ref{l3}, the $\AJ$-images of these two terms belong to complementary subgroups of $J(\bigwedge^3 H^1)$.
\end{proof}

\begin{proof}[Proof of \ref{thm:main1} and \ref{thm:main2}]
By Lemma \ref{l4}, $Z^*\gks(X,\infty)$ recovers $\Pi_Z(\gks(X,\infty))$.  The proof of Theorem \ref{th-main} produces $Z\in \CH^1(\xt)$ such that $Z^*\AJ(\gks(X,\infty))=\AJ(\Pi_Z(\gks(X,\infty)))\neq 0$, whence $\AJ(\gks(X,\infty)))\neq 0$.  Theorem \ref{thm:main2} now follows from Proposition \ref{pr24}.

Fix a base point $o$ and define $f\colon X^{\times 3}\to \mathrm{Jac}(X)$ by $(q_1,q_2,q_3)\mapsto\mathrm{AJ}(\sum q_i - 3o)$. By \cite[Prop.~2.9]{CG}, $f_*\gks(X,o))$ and $3(X_o^+-X_o^-)$ have the same Abel-Jacobi images after projecting to $J_{\text{pr}}:=J(H^{2g-3}_{\text{prim}}(\mathrm{Jac}(X))(g-1))$; while $f_*$ induces an isomorphism from $J(\wpr)$ to $J_{\text{pr}}$ under which the primitive AJ-images of $\gks(X,o)$ and $f_*\gks(X,o)$ agree.  Since the primitive AJ-image of $\gks(X,o)$ is nonzero and independent of $o$ by the lemmas above, so is that of $X_o^+-X_o^-$.  This proves Theorem \ref{thm:main2}.
\end{proof}

With the same argument, Theorem \ref{Ceresa-concrete} stated in the introduction follows from Theorem \ref{thm-concrete}.

\subsection{Remarks on Abel-Jacobi maps}\label{S5.5}
In the previous subsection, only the first of the three Lemmas is special to certain modular curves.  For a more general curve (of genus at least $3$), Proposition \ref{pr24} generalizes to the statement that
\begin{equation}\label{en8}
\textit{The projection of $\AJ(\gks(X,o))$ to $\textstyle J(\wpr)$ is independent of $o$,}
\end{equation}
where we mean the projection $\mathrm{pr}\colon J(\bigwedge^3 H^1)\onto J(\wpr)$ killing $J(\theta H^1)$. This projection is induced by the operator $$\mathrm{pr}:=\text{id}-\tfrac{3}{2g-2}\theta_*\theta^*\in \mathrm{End}(\CH^3(\xT))$$
since $\theta^*\mathrm{pr}(\W)=\theta^*\W-\tfrac{3}{2g-2}\theta^*\theta_*\theta^*\W=\theta^*\W-\theta^*\W=0$ for any $\W\in \CH^2(\xT)$.

Recall from \eqref{en4} that for any $Z\in \CH^1(\xt)$, $Z^*$ maps $\CH^2(\xT)\to \CH^1(X)$.  The induced map of Jacobians $Z^*\colon J(\bigwedge^3 H^1)\onto J(X)$ only depends on the (Hodge) class\footnote{Here we take $[Z]$ to mean the projection of the fundamental class from $H^2(\xt)\onto \bigwedge^2 H^1$.} $[Z]\in \mathrm{Hg}^1(\bigwedge^2 H^1)\in \mathrm{Hom}_{\text{MHS}}(\QQ(-1),\bigwedge^2 H^1)$, inducing pairings $\mathrm{Hg}^1(\bigwedge^2 H^1)\times J(\bigwedge^3 H^1)\to J(X)$ and 
\begin{equation}\label{en9}
\langle\;\cdot\;,\;\cdot\;\rangle\colon \frac{\mathrm{Hg}^1(\bigwedge^2 H^1)}{\langle[\theta]\rangle}\times J(\textstyle\wpr)\to J(X)
\end{equation}
An immediate consequence of \eqref{en8} is then that
\begin{equation}\label{en10}
\langle\;\cdot\;,\AJ(\mathrm{pr}(\gks(X,o)))\rangle\in \textstyle\mathrm{Hom}\left(\mathrm{Hg}^1(\bigwedge^2 H^1)/\langle[\theta]\rangle,J(X)\right)
\end{equation}
is independent of $o$.\footnote{Indeed, an easy calculation gives $Z^*\mathrm{pr}(\gks(X,o))=\tfrac{-1}{2g-2}Z^*\gks^{K_X}$, where the notation $\gks^{\cdots}$ has been linearly extended to divisors on $X$.} This is precisely the invariant which we have shown to be nonzero for most modular curves.  In contrast, for a \emph{general} curve, \eqref{en9}-\eqref{en10} are zero since $\mathrm{Hg}^1(\bigwedge^2 H^1)$ is generated by $[\theta]$; moreover, $J(\wpr)$ is irreducible, so cannot in any case map nontrivially to $J(X)$.

For one more perspective on the situation, let $F^1_h (\bigwedge^3 H^1) \subset \bigwedge^3 H^1$ denote the maximal sub-Hodge structure of level $1$, i.e.~with complexification contained in $F^1(\bigwedge^3 H^1_{\CC})$.  Write $\bigwedge^3 H^1=F^1_h(\bigwedge^3 H^1)\oplus \bigwedge^3_{G}=\theta H^1\oplus V(-1)\oplus \bigwedge^3_G$, where $\bigwedge^3_G$ has level $3$ with no level-$1$ sub-HS, $V$ is a weight-1 Hodge structure, and $\wpr=V(-1)\oplus \bigwedge^3_G$.  (For $X$ general, we have $V=\{0\}$.)  Consider the restriction
\begin{equation}\label{en11}
\AJ\colon \Sym^o \CH^3(\xT)\to J(\textstyle\bigwedge^3 H^1)=J(X)\times J(V)\times J(\textstyle\bigwedge^3_G)
\end{equation}
of the Abel-Jacobi map to ``symmetric'' cycles, and write accordingly $$\AJ(\W)=(\AJ(\W)_J,\AJ(\W)_V,\AJ(\W)_G).$$  Then:
\begin{itemize}[leftmargin=0.5cm]
\item $\AJ(\W)_G$ detects nontriviality of $\W$ in the Griffiths group of cycles modulo \emph{algebraic} equivalence, but is hard to compute;\footnote{In particular, it is killed by $Z^*$, since the only morphism from $\bigwedge^3_G$ to $H^1(-1)$ (hence from $J(\bigwedge^3_G)$ to $J(X)$) is zero; otherwise $\bigwedge^3_G$ would have a nontrivial level-1 summand.}
\item $\AJ(\W)_V$ only detects nontriviality of $\W$ modulo rational equivalence, but is nonzero whenever $0\neq Z^*\AJ(\mathrm{pr}(\W)))\in J(X)$ for some $Z$;\footnote{Note that the ``pr'' here is essential.} and
\item for $\W=\gks(X,o)$, $\AJ(\W)_G$ and $\AJ(\W)_V$ are independent of basepoint $o$.
\end{itemize}
One can also ask about the converse of the second bullet:  
\begin{equation}\label{en12}
\textit{If $\AJ(\W)_V\neq 0$, can this always be detected by applying some $Z^*$?}
\end{equation}
A sufficient criterion for an affirmative answer to \eqref{en12} is to have 
\begin{equation}\label{en13}
F^1_h({\textstyle \bigwedge^3 H^1})\;\;\;= \sum_{Z\in \mathrm{Hg}^1(\wedge^2 H^1)} \QQ\langle [Z]\rangle \wedge H^1,
\end{equation}
since then the ``test forms'' for the first two factors of \eqref{en11} are of the form $\Sym(\delta_{\pi_{12}^*Z}\wedge \pi_3^*\omega)$, where $\delta_{\cdots}$ denotes the current of integration and $\omega\in \Omega^1(X)$.

Define a level-$1$ Hodge structure $H$, and its Jacobian $J(H)$, to be \emph{stably nondegenerate} if the ring of Hodge classes in $H^*(J(H)^{\times k})$ is generated by divisor classes for every $k\geq 1$.  It follows from the proof of the main theorem of \cite{Ha} that if $J(X)$ is stably nondegenerate, then \eqref{en12}-\eqref{en13} are true. Moreover, according to \cite[Thm.~1.2]{Ha2}, $J(X)$ is stably nondegenerate for the modular curves $X=X_1(N)$ and any curves dominated by them, such as $X_0(N)$.  Thus we expect the basic approach via Lemma \ref{lem:Z} to studying the GKS and Ceresa cycles to have broader scope than just for the curves $X_N$ treated here.



  \begin{bibdiv}
	\begin{biblist}

 \bib{Boya}{article}{
   author={Ad\v zaga, Nikola},
   author={Arul, Vishal},
   author={Beneish, Lea},
   author={Chen, Mingjie},
   author={Chidambaram, Shiva},
   author={Keller, Timo},
   author={Wen, Boya},
   title={Quadratic Chabauty for Atkin-Lehner quotients of modular curves of
   prime level and genus 4, 5, 6},
   journal={Acta Arith.},
   volume={208},
   date={2023},
   number={1},
   pages={15--49},
   issn={0065-1036},
   review={\MR{4616870}},
   doi={10.4064/aa220110-7-3},
}

 \bib{BS}{article}{
   author={Beauville, Arnaud},
   author={Schoen, Chad},
   title={A non-hyperelliptic curve with torsion Ceresa cycle modulo
   algebraic equivalence},
   journal={Int. Math. Res. Not. IMRN},
   date={2023},
   number={5},
   pages={3671--3675},
   issn={1073-7928},
   review={\MR{4565651}},
   doi={10.1093/imrn/rnab344},
}

 \bib{Bloch}{article}{
   author={Bloch, Spencer},
   title={Algebraic cycles and values of $L$-functions},
   journal={J. Reine Angew. Math.},
   volume={350},
   date={1984},
   pages={94--108},
   issn={0075-4102},
   review={\MR{0743535}},
   doi={10.1515/crll.1984.350.94},
}

 \bib{BY}{article}{
   author={Bruinier, Jan Hendrik},
   author={Yang, Tonghai},
   title={Faltings heights of CM cycles and derivatives of $L$-functions},
   journal={Invent. Math.},
   volume={177},
   date={2009},
   number={3},
   pages={631--681},
   issn={0020-9910},
   review={\MR{2534103}},
   doi={10.1007/s00222-009-0192-8},
}

\bib{BFH}{article}{
   author={Bump, Daniel},
   author={Friedberg, Solomon},
   author={Hoffstein, Jeffrey},
   title={Nonvanishing theorems for $L$-functions of modular forms and their
   derivatives},
   journal={Invent. Math.},
   volume={102},
   date={1990},
   number={3},
   pages={543--618},
   issn={0020-9910},
   review={\MR{1074487}},
   doi={10.1007/BF01233440},
}

\bib{Ceresa}{article}{
   author={Ceresa, G.},
   title={$C$ is not algebraically equivalent to $C\sp{-}$ in its Jacobian},
   journal={Ann. of Math. (2)},
   volume={117},
   date={1983},
   number={2},
   pages={285--291},
   issn={0003-486X},
   review={\MR{0690847}},
   doi={10.2307/2007078},
}

\bib{CG}{article}{
   author={Colombo, Elisabetta},
   author={van Geemen, Bert},
   title={Note on curves in a Jacobian},
   journal={Compositio Math.},
   volume={88},
   date={1993},
   number={3},
   pages={333--353},
   issn={0010-437X},
   review={\MR{1241954}},
}

\bib{DDLR}{article}{
   author={Darmon, Henri},
   author={Daub, Michael},
   author={Lichtenstein, Sam},
   author={Rotger, Victor},
   title={Algorithms for Chow-Heegner points via iterated integrals},
   journal={Math. Comp.},
   volume={84},
   date={2015},
  number={295},
   pages={2505--2547},
   issn={0025-5718},
   review={\MR{3356037}},
   doi={10.1090/S0025-5718-2015-02927-5},
}

 \bib{DRS}{article}{
   author={Darmon, Henri},
   author={Rotger, Victor},
   author={Sols, Ignacio},
   title={Iterated integrals, diagonal cycles and rational points on
   elliptic curves},
   language={English, with English and French summaries},
   conference={
      title={Publications math\'{e}matiques de Besan\c{c}on. Alg\`ebre et th\'{e}orie des
      nombres, 2012/2},
   },
   book={
      series={Publ. Math. Besan\c{c}on Alg\`ebre Th\'{e}orie Nr.},
      volume={2012/},
      publisher={Presses Univ. Franche-Comt\'{e}, Besan\c{c}on},
   },
   date={2012},
   pages={19--46},
   review={\MR{3074917}},
}

\bib{DF}{article}{
   author={Dogra, Netan},
   author={Le Fourn, Samuel},
   title={Quadratic Chabauty for modular curves and modular forms of rank
   one},
   journal={Math. Ann.},
   volume={380},
   date={2021},
   number={1-2},
   pages={393--448},
   issn={0025-5831},
   review={\MR{4263688}},
   doi={10.1007/s00208-020-02112-3},
}

 \bib{Drinfeld}{article}{
   author={Drinfeld, V. G.},
   title={Two theorems on modular curves},
   language={Russian},
   journal={Funkcional. Anal. i Prilo\v{z}en.},
   volume={7},
   date={1973},
   number={2},
   pages={83--84},
   issn={0374-1990},
   review={\MR{0318157}},
}


\bib{EM}{article}{
   author={Eskandari, Payman},
   author={Murty, V. Kumar},
   title={On Ceresa cycles of Fermat curves},
   journal={J. Ramanujan Math. Soc.},
   volume={36},
   date={2021},
   number={4},
   pages={363--382},
   issn={0970-1249},
   review={\MR{4355369}},
}

\bib{EMPAMS}{article}{
   author={Eskandari, Payman},
   author={Murty, V. Kumar},
   title={On the harmonic volume of Fermat curves},
   journal={Proc. Amer. Math. Soc.},
   volume={149},
   date={2021},
  number={5},
   pages={1919--1928},
   issn={0002-9939},
   review={\MR{4232186}},
   doi={10.1090/proc/15332},
}


\bib{GKZ}{article}{
   author={Gross, B.},
   author={Kohnen, W.},
   author={Zagier, D.},
   title={Heegner points and derivatives of $L$-series. II},
   journal={Math. Ann.},
   volume={278},
   date={1987},
   number={1-4},
   pages={497--562},
   issn={0025-5831},
   doi={10.1007/BF01458081},
}

\bib{GK}{article}{
   author={Gross, Benedict H.},
   author={Kudla, Stephen S.},
   title={Heights and the central critical values of triple product
   $L$-functions},
   journal={Compositio Math.},
   volume={81},
   date={1992},
   number={2},
   pages={143--209},
   issn={0010-437X},
   review={\MR{1145805}},
}

\bib{GS}{article}{
   author={Gross, B. H.},
   author={Schoen, C.},
   title={The modified diagonal cycle on the triple product of a pointed
   curve},
   language={English, with English and French summaries},
   journal={Ann. Inst. Fourier (Grenoble)},
   volume={45},
   date={1995},
   number={3},
   pages={649--679},
   issn={0373-0956},
   review={\MR{1340948}},
   doi={10.5802/aif.1469},
}

\bib{GZ}{article}{
   author={Gross, Benedict H.},
   author={Zagier, Don B.},
   title={Heegner points and derivatives of $L$-series},
   journal={Invent. Math.},
   volume={84},
   date={1986},
   number={2},
   pages={225--320},
   issn={0020-9910},
   doi={10.1007/BF01388809},
}

\bib{Harris}{article}{
   author={Harris, Bruno},
   title={Homological versus algebraic equivalence in a Jacobian},
   journal={Proc. Nat. Acad. Sci. U.S.A.},
   volume={80},
   date={1983},
   number={4},
   pages={1157--1158},
   issn={0027-8424},
   review={\MR{0689846}},
   doi={10.1073/pnas.80.4.1157},
}

\bib{Ha2}{article}{
   author={Hazama, Fumio},
   title={Algebraic cycles on abelian varieties with many real endomorphisms},
   journal={T\^ohoku Math.~J.},
   volume={35},
   date={1983},
   pages={303-308},
   review={\MR{0699932}},
   doi={},
}

\bib{Ha}{article}{
   author={Hazama, Fumio},
   title={The generalized Hodge conjecture for stably nondegenerate abelian varieties},
   journal={Compositio Math.},
   volume={93},
   date={1994},
   number={2},
   pages={129-137},
   issn={},
   review={\MR{1287693}},
   doi={},
}


\bib{KowalskiThesis}{book}{
   author={Kowalski, Emmanuel Guillaume},
   title={The rank of the Jacobian of modular curves: Analytic methods},
 note={Thesis (Ph.D.)--Rutgers The State University of New Jersey - New
   Brunswick},
   publisher={ProQuest LLC, Ann Arbor, MI},
   date={1998},
   pages={209},
   isbn={978-0591-97579-6},
   review={\MR{2697914}},
}

\bib{Kud97}{article}{
   author={Kudla, Stephen S.},
   title={Algebraic cycles on Shimura varieties of orthogonal type},
   journal={Duke Math. J.},
   volume={86},
   date={1997},
   number={1},
   pages={39--78},
   issn={0012-7094},
   doi={10.1215/S0012-7094-97-08602-6},
}

\bib{KM}{article}{
    AUTHOR = {Kowalski, E.},
    author={Michel, P.},
     TITLE = {A lower bound for the rank of {$J_0(q)$}},
   JOURNAL = {Acta Arith.},
  FJOURNAL = {Acta Arithmetica},
    VOLUME = {94},
      YEAR = {2000},
    NUMBER = {4},
     PAGES = {303--343},
      ISSN = {0065-1036,1730-6264},
   MRCLASS = {11F67 (11F30 11F66 11G40)},
  MRNUMBER = {1779946},
MRREVIEWER = {M.\ Ram\ Murty},
       DOI = {10.4064/aa-94-4-303-343},
       URL = {https://doi.org/10.4064/aa-94-4-303-343},
}

\bib{LS}{article}{
   author={Lilienfeldt, David T.-B. G.},
   author={Shnidman, Ari},
   title={Experiments with Ceresa classes of cyclic Fermat quotients},
   journal={Proc. Amer. Math. Soc.},
   volume={151},
   date={2023},
   number={3},
   pages={931--947},
   issn={0002-9939},
   review={\MR{4531629}},
   doi={10.1090/proc/16178},
}

\bib{lmfdb}{article}{
  shorthand    = {LMFDB},
  author       = {The {LMFDB Collaboration}},
  title        = {The {L}-functions and modular forms database},
  howpublished = {\url{https://www.lmfdb.org}},
  year         = {2024},
  note         = {[Online; accessed 2 July 2024]},
}

\bib{LR}{article}{
    author={Lupoian, Elvira},
    author={Rawson, James},
    title={Ceresa Cycles of $X_0(N)$},
    journal={arXiv:2501.14060
},
}

\bib{Manin}{article}{
   author={Manin, Ju. I.},
   title={Parabolic points and zeta functions of modular curves},
   language={Russian},
   journal={Izv. Akad. Nauk SSSR Ser. Mat.},
   volume={36},
   date={1972},
   pages={19--66},
   issn={0373-2436},
   review={\MR{0314846}},
}

\bib{MSD}{article}{
   author={Mazur, B.},
   author={Swinnerton-Dyer, P.},
   title={Arithmetic of Weil curves},
   journal={Invent. Math.},
   volume={25},
   date={1974},
   pages={1--61},
   issn={0020-9910},
   review={\MR{0354674}},
   doi={10.1007/BF01389997},
}

\bib{MY}{article}{
    AUTHOR = {Miller, Stephen D},
    author ={Yang, Tonghai},
     TITLE = {Non-vanishing of the central derivative of canonical {H}ecke
              {$L$}-functions},
   JOURNAL = {Math. Res. Lett.},
  FJOURNAL = {Mathematical Research Letters},
    VOLUME = {7},
      YEAR = {2000},
    NUMBER = {2-3},
     PAGES = {263--277},
      ISSN = {1073-2780},
   MRCLASS = {11F67 (11G05)},
  MRNUMBER = {1764321},
MRREVIEWER = {Andrea\ Mori},
       DOI = {10.4310/MRL.2000.v7.n3.a2},
       URL = {https://doi.org/10.4310/MRL.2000.v7.n3.a2},
}

\bib{MR}{article}{
    AUTHOR = {Montgomery, Hugh L.},
    author ={Rohrlich, David E.},
     TITLE = {On the {$L$}-functions of canonical {H}ecke characters of
              imaginary quadratic fields. {II}},
   JOURNAL = {Duke Math. J.},
  FJOURNAL = {Duke Mathematical Journal},
    VOLUME = {49},
      YEAR = {1982},
    NUMBER = {4},
     PAGES = {937--942},
      ISSN = {0012-7094,1547-7398},
   MRCLASS = {12A70 (10D24 14K07)},
  MRNUMBER = {683009},
MRREVIEWER = {Reinhard\ B\"olling},
       URL = {http://projecteuclid.org/euclid.dmj/1077315537},
}

\bib{MM}{article}{
   author={Murty, M. Ram},
   author={Murty, V. Kumar},
   title={Mean values of derivatives of modular $L$-series},
   journal={Ann. of Math. (2)},
   volume={133},
   date={1991},
   number={3},
   pages={447--475},
   issn={0003-486X},
   review={\MR{1109350}},
   doi={10.2307/2944316},
}

\bib{Qiu}{article}{
    author={Qiu, Congling},
    title={Finiteness properties for Shimura curves and modified diagonal cycles},
    journal={arXiv:2310.20600
},
}

\bib{QZ1}{article}{
    author={Qiu, Congling},
    author={Zhang, Wei},
    title={Vanishing results in Chow groups for the modified diagonal cycles},
    journal={arXiv:2209.09736
},
}

\bib{QZ2}{article}{
    author={Qiu, Congling},
    author={Zhang, Wei},
    title={Vanishing results for the modified diagonal cycles II: Shimura curves},
    journal={arXiv:2310.19707
},
}

\bib{Roh}{article} {
    AUTHOR = {Rohrlich, David E.},
     TITLE = {Galois conjugacy of unramified twists of {H}ecke characters},
   JOURNAL = {Duke Math. J.},
  FJOURNAL = {Duke Mathematical Journal},
    VOLUME = {47},
      YEAR = {1980},
    NUMBER = {3},
     PAGES = {695--703},
      ISSN = {0012-7094,1547-7398},
   MRCLASS = {12A70 (12A65)},
  MRNUMBER = {587174},
MRREVIEWER = {Alan\ Candiotti},
       URL = {http://projecteuclid.org/euclid.dmj/1077314189},
}

\bib{Sch}{article}{
   author={Scholl, A. J.},
   title={Classical motives},
   conference={
      title={Motives},
      address={Seattle, WA},
      date={1991},
   },
   book={
      series={Proc. Sympos. Pure Math.},
      volume={55, Part 1},
      publisher={Amer. Math. Soc., Providence, RI},
   },
   isbn={0-8218-1636-5},
   date={1994},
   pages={163--187},
   review={\MR{1265529}},
   doi={10.1090/pspum/055.1/1265529},
}

\bib{Wald}{article}{
   author={Waldspurger, J.-L.},
   title={Correspondances de Shimura},
   language={French},
   conference={
      title={Proceedings of the International Congress of Mathematicians,
      Vol. 1, 2},
      address={Warsaw},
      date={1983},
   },
   book={
      publisher={PWN, Warsaw},
   },
   isbn={83-01-05523-5},
   date={1984},
   pages={525--531},
   review={\MR{0804708}},
}

\bib{Yang}{article} {
    AUTHOR = {Yang, Tonghai},
     TITLE = {On {CM} abelian varieties over imaginary quadratic fields},
   JOURNAL = {Math. Ann.},
  FJOURNAL = {Mathematische Annalen},
    VOLUME = {329},
      YEAR = {2004},
    NUMBER = {1},
     PAGES = {87--117},
      ISSN = {0025-5831,1432-1807},
   MRCLASS = {11G10 (11G15 11G40)},
  MRNUMBER = {2052870},
MRREVIEWER = {Henri\ Darmon},
       DOI = {10.1007/s00208-004-0511-8},
       URL = {https://doi.org/10.1007/s00208-004-0511-8},
}

\bib{YY}{article}{
   author={Yang, Tonghai},
   author={Yin, Hongbo},
   title={Difference of modular functions and their CM value factorization},
   journal={Trans. Amer. Math. Soc.},
   volume={371},
   date={2019},
   number={5},
   pages={3451--3482},
   issn={0002-9947},
   review={\MR{3896118}},
   doi={10.1090/tran/7479},
}

 \bib{YZZ09}{article}{
   author={Yuan, Xinyi},
   author={Zhang, Shou-Wu},
   author={Zhang, Wei},
   title={The Gross-Kohnen-Zagier theorem over totally real fields},
   journal={Compos. Math.},
   volume={145},
   date={2009},
   number={5},
   pages={1147--1162},
   issn={0010-437X},
   review={\MR{2551992}},
   doi={10.1112/S0010437X08003734},
}
 
\bib{YZZ}{book}{
   author={Yuan, Xinyi},
   author={Zhang, Shou-Wu},
   author={Zhang, Wei},
   title={The Gross-Zagier formula on Shimura curves},
   series={Annals of Mathematics Studies},
   volume={184},
   publisher={Princeton University Press, Princeton, NJ},
   date={2013},
   pages={x+256},
   isbn={978-0-691-15592-0},
   review={\MR{3237437}},
}

\bib{Zhang}{article}{
   author={Zhang, Shou-Wu},
   title={Gross-Schoen cycles and dualising sheaves},
   journal={Invent. Math.},
   volume={179},
   date={2010},
   number={1},
   pages={1--73},
   issn={0020-9910},
   review={\MR{2563759}},
   doi={10.1007/s00222-009-0209-3},
}

	\end{biblist}
\end{bibdiv}

\end{document}